\numberwithin{equation}{section}
\newtheorem{thm}{Theorem}[section]
\newtheorem{lem}[thm]{Lemma}
\newtheorem{cor}[thm]{Corollary}
\newtheorem{prop}[thm]{Proposition}
\theoremstyle{definition}
\newtheorem{defn}[thm]{Definition}
\newtheorem{conv}[thm]{Convention}
\newtheorem{rem}[thm]{Remark}
\newtheorem{rem-conv}[thm]{Remark-Convention}
\newtheorem{exmp}[thm]{Example}
\newtheorem{prop-defn}[thm]{Proposition-Definition}
\newtheorem{defn-rem}[thm]{Definition-Remark}
\newtheorem*{claim*}{Claim}
\newtheorem*{ack*}{Acknowledgements}
\newtheorem*{ex*}{Example}
\newcommand{\cal}{\mathcal}
\newcommand{\g}{\mathfrak{g}}
\def\epsilon{\varepsilon}
\def\phi{\varphi}
\def\Pr{\mathbb P}
\newcommand{\Curr}{\mbox{Curr}}
\newcommand{\Out}{\mbox{Out}}
\newcommand{\Aut}{\mbox{Aut}}
\newcommand{\ILT}{\mbox{ILT}}
\newcommand{\FN}{F_N}   % F ou F_n ou F_N ?
\newcommand{\CVN}{\mbox{CV}_N}
\newcommand{\CVNbar}{\overline{\mbox{CV}}_N}
\newcommand{\PCurr}{\Pr\Curr(\FN)}
\newcommand{\R}{\mathbb R}
\newcommand{\Z}{\mathbb Z}
\newcommand{\N}{\mathbb N}
\newcommand{\A}{{\cal A}}
\def\strutdepth{\dp\strutbox}
\def \ss{\strut\vadjust{\kern-\strutdepth \sss}}
\def \sss{\vtop to \strutdepth{
\baselineskip\strutdepth\vss\llap{$\diamondsuit\;\;$}\null}}
\def\strutdepth{\dp\strutbox}
\def \sst{\strut\vadjust{\kern-\strutdepth \ssss}}
\def \ssss{\vtop to \strutdepth{
\baselineskip\strutdepth\vss\llap{$\spadesuit\;\;$}\null}}
\def\strutdepth{\dp\strutbox}
\def \ssh{\strut\vadjust{\kern-\strutdepth \sssh}}
\def \sssh{\vtop to \strutdepth{
\baselineskip\strutdepth\vss\llap{$\heartsuit\;\;$}\null}}
\def\qed{\hfill\rlap{$\sqcup$}$\sqcap$\par}
\def\bar{\overline}
\def\tilde{\widetilde}
\title[Dynamics of hyperbolic automorphisms of $\FN$]
{
North-South dynamics of hyperbolic free group automorphisms on the 
space of currents
}
 \author{Martin Lustig} 
 \address{\tt 
Aix Marseille Universit\'e, CNRS, Centrale Marseille, I2M UMR 7373
13453, Marseille, France}
 \email{\tt Martin.Lustig@univ-amu.fr} 
\author{Caglar Uyanik}
\address{\tt Department of Mathematics, University of Illinois at
 Urbana-Champaign, 1409 West Green Street, Urbana, IL 61801, USA\\
\url{http://www.math.uiuc.edu/~cuyanik2}} \email{\tt cuyanik2@illinois.edu}
\thanks{\today}
\begin{document}
	
\begin{abstract} Let $\varphi$ be a hyperbolic outer automorphism of a non-abelian free group 
$\FN$ 
such that $\varphi$ and $\varphi^{-1}$ admit 
absolute 
train track representatives. We prove that 
$\varphi$ acts on the space of projectivized geodesic currents 
on $\FN$ 
with generalized uniform 
North-South dynamics.  
\end{abstract}

\thanks{The authors gratefully acknowledge support from U.S. National Science Foundation grants DMS 1107452, 1107263, 1107367 ``RNMS: GEometric structures And Representation varieties" (the GEAR Network)." 
The first author was partially supported by the French research grant ANR-2010-BLAN-116-01 GGAA. The second author was partially supported by the NSF grants of Ilya Kapovich (DMS 1405146 and DMS 1710868) and Christopher J. Leininger (DMS 1510034). 
}

%\subjclass[2010]{Primary 20F, Secondary 57M, 37D}

\maketitle

%\tableofcontents
\section{Introduction}
 
Thurston's groundbreaking work on the classification of surface homeomorphisms (Nielsen-Thurston classification) has triggered much of the progress
on the mapping class group and on the Teichm\"uller space in the past 30 years. 
This theory has inspired important 
developments for several related groups; 
most notably for the group $\Out(\FN)$ of outer automorphisms of a non-abelian free group $\FN$ of finite rank $N\ge2$.

Two rather different spaces on which $\Out(\FN)$ acts serve as analogues to Teichm\"uller space:
One is Culler-Vogtmann's {\em Outer space} $\CVN$ \cite{CV}, the other  is the space of \emph{projectivized geodesic currents}  $\PCurr$ \cite{Bo}.  An \emph{intersection form}, generalizing Thurston's celebrated intersection form for measured laminations \cite{Th}, intimately intertwines the two spaces \cite{KL2}. 

The space $\CVN$ is finite dimensional, and the action of $\Out(\FN)$ on $\CVN$ is properly discontinuous. 
Moreover, $\CVN$ has a natural ``Thurston boundary'', which gives rise to a compactification $\CVNbar$. The $\Out(\FN)$-action extends to this compactification,
which mimics 
closely Thurston's compactification of 
the
Teichm\"uller space via projective measured laminations \cite{BF93}, \cite{CL95}, \cite{CV}, \cite{Th}. 

On the other hand, the space of projectivized geodesic currents $\PCurr$ is infinite dimensional, but it is already compact, and has a 
projective structure. There is a natural ``interior'' of $\PCurr$ on which $\Out(\FN)$ acts properly discontinuously 
\cite{KL7}. 

The space 
$\CVN$ has received more attention in recent years than 
$\PCurr$, 
due to the connections with various curve complex analogues (see e.g. \cite{BF14}, \cite{HM13}). Nevertheless, 
for several purposes, including some rather fundamental 
ones, 
the space $\PCurr$ seems to have an advantage over $\CVNbar$. In order get a deeper understanding 
one often needs to utilize the action of $\Out(\FN)$ on both spaces simultaneously, see for instance \cite{BR1}, \cite{H12}.

This paper concerns one of the fundamental features prominently present in Nielsen-Thurston
theory: the dynamics of 
the action of
individual elements $\phi \in \Out(\FN)$ on $\PCurr$, and the study of the relationship between aspects of this dynamics and the algebraic structure of $\phi$. 
More precisely, the paper aims to generalize Thurston's result 
that any pseudo-Anosov mapping class 
acts on compactified Teichm\"uller space with 
North-South dynamics 
\cite{Th, Ivanov} 

An important result in this direction is already known for both of the above mentioned $\Out(\FN)$-analogues of Teichm\"uller space:
If the automorphism 
$\phi\in\Out(\FN)$ is fully irreducible, also known as 
iwip (``irreducible with irreducible powers''),  
then the $\phi$-action 
on both, $\CVNbar$ and $\PCurr$, 
has uniform North-South dynamics, see \cite{LL}, \cite{Martin}, \cite{U2} and \cite{U}. The pairs of poles of these two actions are strongly related to each other through the above mentioned intersection form, see 
\cite{CHL3}, 
\cite{KL3}, \cite{U}. 

In this paper we concentrate on 
the 
following class of automorphisms: 
An element
$\phi \in \Out(\FN)$ is called {\em hyperbolic} 
if the length of any non-trivial conjugacy class grows exponentially under iteration of $\phi$.
This condition turns out to be equivalent to requiring that 
$\phi$ is {\em atoroidal}, i.e. $\phi$ has no non-trivial periodic conjugacy class in $\FN$. 

Recall that both classes, fully irreducible and hyperbolic automorphisms, can be viewed as natural analogues of pseudo-Anosov mapping classes. 
In a probabilistic sense, both of these notions are ``generic''  in $\Out(F_N)$\cite {Ri}, as is the case for pseudo-Anosov homeomorphisms in mapping class groups \cite{Ri,Mah}. 
Hyperbolic automorphisms have the advantage that their mapping torus groups $F_N\rtimes_\varphi\mathbb{Z}$ are Gromov hyperbolic, see \cite{BF}, \cite{Brink}.
However, hyperbolic automorphisms can admit invariant free factors and hence these automorphisms are harder to study than fully irreducible automorphisms. 
As a consequence of the more complicated algebraic structure, one can not expect a classical North-South dynamics for hyperbolic automorphisms. Counterexamples for both, $\CVNbar$ and $\PCurr$ are easy to construct.

Nevertheless, 
it turns out that there are still strong North-South dynamical features 
in the 
action 
of any hyperbolic $\phi \in \Out(\FN)$ 
on 
the space of currents. 
This paper aims 
to make this 
statement 
precise:
We define ``generalized North and South poles'' in $\PCurr$, 
which are finite dimensional simplexes
$\Delta_+(\phi)$ and $\Delta_-(\phi)$, 
and show:

\begin{thm}
\label{mainthm-intro} 
Let $\varphi\in\Out(F_N)$ be a hyperbolic outer automorphism with the property that both $\varphi$ and $\varphi^{-1}$ admit (absolute) train track representatives. Then $\varphi$ acts on $\mathbb{P}\Curr(F_N)$ with ``generalized uniform North-South dynamics 
from $\Delta_-(\phi)$ to $\Delta_+(\phi)$'' in the following sense: 

Given a neighborhood $U$ of $\Delta_{+}(\varphi)$ and a compact set $K\subset\mathbb{P}\Curr(F_N)\smallsetminus\Delta_{-}(\varphi)$, there exists an integer $M \geq 1$ such that $\varphi^{n}(K)\subset U$ for all $n\ge M$. 

Similarly, given a 
neighborhood $V$ of $\Delta_{-}(\varphi)$ and  a compact set $K'\subset\mathbb{P}\Curr(F_N)\smallsetminus\Delta_{+}(\varphi)$, there exists an integer $M' \geq 1$ such that $\varphi^{-n}(K')\subset V$ for all $n\ge M'$. 
\end{thm}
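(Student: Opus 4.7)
\medskip

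\noindent\textbf{Proof plan.} The plan is to build the poles $\Delta_+(\varphi)$ and $\Delta_-(\varphi)$ from the train track data, then produce a quantitative ``attraction estimate'' that lets a single limit argument handle every current outside the repelling simplex uniformly.

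\smallskip

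\noindent\emph{Step 1: Constructing the simplexes.} Fix an absolute train track representative $f:G\to G$ of $\varphi$ and one, say $g:G'\to G'$, of $\varphi^{-1}$. For each exponentially growing (EG) stratum $H_r$ of $f$ there is a Perron--Frobenius eigenvector and an attracting lamination $\Lambda_r^+$ in the sense of Bestvina--Feighn--Handel, and an associated ``dual'' current $\mu_r^+\in\Curr(F_N)$ obtained as the projective limit of edge currents under $f^n$ restricted to $H_r$. I define $\Delta_+(\varphi)$ to be the projective simplex spanned by the finitely many $\mu_r^+$; $\Delta_-(\varphi)$ is defined analogously from $g$. Hyperbolicity of $\varphi$ (i.e.\ atoroidality) is used here to guarantee that no Nielsen-type current attached to periodic conjugacy classes contributes: every non-EG stratum of $f$ is a single NEG edge whose occurrences in iterates of any legal word are dominated in count by EG contributions. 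This is what makes $\Delta_\pm$ finite-dimensional.

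\smallskip

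\noindent\emph{Step 2: The attraction estimate.} The heart of the argument is a ``weight evolution'' lemma controlling how cylinder weights $\mu([\gamma])$ evolve under the push-forward $\varphi_\ast^n\mu$ for legal paths $\gamma$. Using the splitting of $f^n$-iterates of edges into legal pieces and the Perron--Frobenius structure on each EG stratum, I would show that for every current $\mu$ the masses $\varphi_\ast^n\mu$ places on cylinders inside $H_r$ grow like $\lambda_r^n$, where $\lambda_r$ is the PF-eigenvalue of $H_r$, \emph{provided} $\mu$ assigns positive weight to at least one cylinder coming from a leaf of $\Lambda_r^+$; otherwise the mass of $\mu$ inside $H_r$ grows strictly slower. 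Normalizing by the largest of the $\lambda_r^n$ then gives projective convergence of $\varphi^n(\mu)$ to $\Delta_+(\varphi)$. The ``bad'' currents — those on which this fails for \emph{every} $r$ — are exactly the currents carried by laminations whose leaves never cross any attracting lamination of $\varphi$; via the intersection form and the $\varphi^{-1}$-train track $g$, these are precisely the currents in $\Delta_-(\varphi)$.

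\smallskip

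\noindent\emph{Step 3: From pointwise to uniform.} Given the compactness of $\PCurr(F_N)$ and the continuity of $\mu\mapsto\mu([\gamma])$, I would define a ``goodness function'' $\mathrm{Good}:\PCurr(F_N)\setminus\Delta_-(\varphi)\to(0,\infty)$ that measures the smallest cylinder weight of $\mu$ along leaves of the $\Lambda_r^+$. On a compact set $K\subset\PCurr(F_N)\setminus\Delta_-(\varphi)$ this function is bounded below by some $\epsilon>0$. Plugging $\epsilon$ into the quantitative estimate of Step~2 yields an $M$ independent of $\mu\in K$ such that $\varphi^n(\mu)$ lies in a prescribed neighborhood $U$ of $\Delta_+(\varphi)$ for all $n\ge M$. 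The statement for $\varphi^{-n}$ follows by the symmetric argument applied to the train track $g$ of $\varphi^{-1}$.

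\smallskip

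\noindent\emph{Main obstacle.} The most delicate point is the identification of the ``bad locus'' with $\Delta_-(\varphi)$ rather than with some strictly larger subset; equivalently, showing that any current which fails to be attracted to $\Delta_+(\varphi)$ must be fully carried by the repelling laminations of $\varphi$. Unlike the iwip case, here several EG strata and NEG connectors interact, so one cannot appeal to a single dominant eigenvalue. The argument requires combining the intersection-form duality between attracting and repelling laminations with the fact that absolute train tracks for both $\varphi$ and $\varphi^{-1}$ give matching legality conditions on leaves — this is precisely where the standing assumption that \emph{both} $\varphi$ and $\varphi^{-1}$ admit absolute train track representatives is essential, and where I expect the bulk of the technical work to sit.
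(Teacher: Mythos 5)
Your Step 3 hinges on a claim that is not established and is in fact strictly harder than the theorem itself: you need your ``goodness function'' (smallest cylinder weight along leaves of the $\Lambda_r^+$) to be lower semicontinuous \emph{and} to vanish precisely on $\Delta_-(\varphi)$, so that compactness of $K$ forces a uniform lower bound $\epsilon>0$. You candidly flag this as the ``main obstacle'' --- identifying the bad locus with $\Delta_-(\varphi)$ rather than a strictly larger set --- but the proposal offers no route past it. In the reducible setting this is genuinely dangerous: there can be currents not in $\Delta_-(\varphi)$ that are nevertheless carried entirely inside non-top strata, assign zero weight to every leaf cylinder of every $\Lambda_r^+$, and for which your Step~2 dichotomy gives no information. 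The intersection form with $\varphi^{-1}$-laminations does not obviously rescue this, because (unlike the iwip case) there is no single dominant lamination pair and the duality you invoke between attracting and repelling laminations across several EG strata and NEG connectors is not a theorem you can simply cite.

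The paper avoids exactly this trap by a structurally different argument, which is worth noting. It never tries to prove pointwise attraction for arbitrary currents and then upgrade via compactness; instead it proves a ``forward-or-backward'' dichotomy only for \emph{rational} currents (Lemma \ref{rationalconvergence} together with Proposition \ref{backforthgoodness}: for any conjugacy class $[w]$ and large $n$, either the combinatorial goodness of $[f^n(\gamma_w)]$ in $\Gamma$ is large, or the combinatorial goodness of $[(f')^n(\gamma'_w)]$ in $\Gamma'$ is large), and then feeds this dichotomy into the general topological criterion Proposition \ref{convergence-criterion}, which converts ``for each point of a dense set, $f^m(y)\in U$ or $f^{-m}(y)\in V$'' into uniform generalized North--South dynamics on the whole compact space. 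Crucially, this only requires $\Delta_+\cap\Delta_-=\emptyset$ (Remark \ref{dynamics-within}), not a characterization of the full non-attracted locus. Your combinatorial intuition from train tracks is the right one, but it must be applied to reduced loops and measured by the paper's notion of edge-level goodness (distance $\geq C$ from illegal turns), not to cylinder weights of general currents; and the passage from a dense set to uniformity must go through the either/or criterion, not through a lower bound on a goodness function over a compact set.
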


Although there exist hyperbolic automorphisms that do not admit (absolute) train track representatives (see Sections 
\ref{graphmaps}
and \ref{markings}), the train track assumption in the above theorem turns out to be not really restrictive
(see Remark \ref{non-ht}). 
Our train track technology (see Section \ref{train-tracks}) has been purposefully set up in a way that it also applies to more general kind of train track maps. 
Since the paper is already technically rather loaded, we have refrained from adding such generalizations here.

\smallskip

The second dynamics result obtained here concerns single orbits of \emph{rational currents}, that is, currents given by non-trivial conjugacy classes in $\FN$, which form a dense subset of $\mathbb{P}\Curr(F_N)$, see Section \ref{sec:currents}:

\begin{thm}\label{rationalconv}
Let $\varphi$ be as in Theorem \ref{mainthm-intro}, and let $\mu\in \Curr(F_N)$
be 
a non-zero rational current.
 
Then, after possibly replacing $\phi$ by a positive power,
there are 
$[\mu_{\infty
}]\in \Delta_{+}(\varphi)$ and $[\mu_{-\infty}]\in\Delta_{-}(\varphi)$ such that 
\[
\lim_{n\to\infty}\varphi^{n}[\mu]=[\mu_{\infty}]
\]
and 
\[
\lim_{n\to\infty}\varphi^{-n}[\mu]=[\mu_{-\infty}]. 
\]
\end{thm}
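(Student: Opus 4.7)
The approach is to combine Theorem \ref{mainthm-intro} (which forces the orbit to accumulate in the simplex $\Delta_\pm(\varphi)$) with a train-track Perron--Frobenius argument that singles out a unique limit point inside $\Delta_\pm(\varphi)$. Throughout, I would first replace $\varphi$ by a positive power so that $\varphi$ fixes each vertex of $\Delta_+(\varphi)$ and $\Delta_-(\varphi)$ individually.

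\textbf{Step 1 (rational currents avoid $\Delta_\pm(\varphi)$).} I would show that no non-zero rational current $[\mu] = [\eta_g]$ can lie in $\Delta_+(\varphi) \cup \Delta_-(\varphi)$. The key fact is that $\eta_g$ spans an extremal ray of the cone $\Curr(F_N)$: any positive, $F_N$- and flip-invariant Borel measure on $\partial^2 F_N$ supported on the $F_N$-orbit of $\{(g^\infty, g^{-\infty}),(g^{-\infty}, g^\infty)\}$ is a scalar multiple of $\eta_g$. In a hypothetical decomposition $\eta_g = \sum c_i v_i$ into the vertices $v_i$ of $\Delta_\pm(\varphi)$, each $v_i$ with $c_i>0$ would satisfy $\mathrm{supp}(v_i) \subset \mathrm{supp}(\eta_g)$ and hence be a multiple of $\eta_g$; so $[\eta_g]$ would already coincide with a vertex and be projectively fixed by $\varphi$. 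This would give $g$ a $\varphi$-periodic conjugacy class, contradicting the atoroidality of the hyperbolic $\varphi$.

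\textbf{Step 2 (accumulation via Theorem \ref{mainthm-intro}).} With $[\mu] \notin \Delta_\pm(\varphi)$ in hand, the singleton $\{[\mu]\}$ is a compact subset of $\mathbb{P}\Curr(F_N) \smallsetminus \Delta_-(\varphi)$, and Theorem \ref{mainthm-intro} immediately yields that every accumulation point of $(\varphi^n[\mu])_{n\geq 1}$ lies in $\Delta_+(\varphi)$; symmetrically, every accumulation point of $(\varphi^{-n}[\mu])_{n\geq 1}$ lies in $\Delta_-(\varphi)$.

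\textbf{Step 3 (single limit via train-track combinatorics).} To upgrade accumulation to convergence to a single point of $\Delta_+(\varphi)$, I would work with the absolute train track representative $f\colon G\to G$ of $\varphi$ supplied by the hypothesis. Represent $g$ as a cyclically reduced legal edge loop $\gamma_g$ in $G$, so that $\varphi^n[\eta_g]$ is represented by $f^n(\gamma_g)$. Viewing projective currents via their normalized frequencies of finite legal subpaths, the subpath-frequency vectors of $f^n(\gamma_g)$ evolve under the transition matrices of the strata of $f$; by a Perron--Frobenius analysis applied stratum by stratum to the exponentially growing strata that $\gamma_g$ crosses, these normalized frequency vectors converge to a specific convex combination of the top eigenvector frequency data, with weights encoding how $\gamma_g$ distributes across the top strata. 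The limiting datum is precisely the frequency datum of a uniquely determined projective current $[\mu_\infty] \in \Delta_+(\varphi)$, to which $\varphi^n[\mu]$ therefore converges; the same analysis applied to the train track representative of $\varphi^{-1}$ yields $[\mu_{-\infty}] \in \Delta_-(\varphi)$. This step is the technical crux: Theorem \ref{mainthm-intro} gives only accumulation in the full simplex and does not pin down a limit point within it, so extracting a single limit genuinely requires the (possibly reducible) train-track combinatorics and a stratum-by-stratum Perron--Frobenius argument. The rationality hypothesis is essential here, since for a generic current the orbit can in principle cluster throughout $\Delta_+(\varphi)$; Steps 1 and 2, by contrast, are soft, being an extremality/atoroidality argument and a direct application of Theorem \ref{mainthm-intro} respectively.
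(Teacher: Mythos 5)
Your proposal identifies the right difficulty (Step 3's observation that Theorem~\ref{mainthm-intro} only yields accumulation in $\Delta_+(\varphi)$, so that pinning down a single limit point requires a genuine frequency/Perron--Frobenius argument), and the paper's proof of Theorem~\ref{thm-rational-limits} does indeed run along the lines of Step~3. However, Step~3 as written contains a genuine gap: you begin by representing $g$ as a ``cyclically reduced \emph{legal} edge loop $\gamma_g$''. In general this is impossible --- a cyclically reduced loop in the train track graph can have arbitrarily many illegal turns, and for such a loop $\varphi^n[\eta_g]$ is represented by the \emph{reduced} path $[f^n(\gamma_g)]$, not by $f^n(\gamma_g)$; the cancellation that occurs destroys the clean substitution dynamics on which your frequency-vector argument depends. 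The paper's way around this is precisely Proposition~\ref{pseudo-legal-iterate}: after replacing $\gamma$ by $[f^k(\gamma)]$ for large $k$, the loop becomes \emph{pseudo-legal}, i.e.\ a legal concatenation of legal subpaths and INPs. Under further iteration of $f$ the INP subpaths stay bounded (INPs are $f$-periodic) while the non-INP legal pieces grow exponentially, so goodness tends to $1$ and the frequency estimates of Lemma~\ref{rationalconvergence} apply to the non-INP edges, yielding convergence to a specific $[\mu_\infty] \in \Delta_+(\varphi)$. Your sketch omits the pseudo-legalization step entirely, which is exactly where illegal turns and INPs have to be dealt with; without it the argument does not go through.

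Steps~1 and~2 of your proposal are correct and cleanly argued (the extremality of rational currents plus atoroidality does force $[\eta_g] \notin \Delta_{\pm}(\varphi)$, and then Theorem~\ref{mainthm-intro} does give accumulation in $\Delta_+(\varphi)$), but they are redundant for the paper's route: once Step~3 is carried out correctly (via pseudo-legalization and Lemma~\ref{rationalconvergence}), one obtains convergence to a single limit point directly, without needing to first rule out $[\eta_g] \in \Delta_{\pm}(\varphi)$ or to invoke the North--South dynamics theorem. So the overall strategy is the right one, but the technical core of Step~3 is missing the key ingredient that handles illegal turns.
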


It is important to point out that this pointwise convergence is in general not uniform: On the contrary, it is possible to construct examples of hyperbolic $\phi$ for which there exist rational currents that linger for an arbitrary long iteration time in an arbitrary close neighborhood of a (fixed) current that is different from the eventual limit current.

We now give a brief outline of the 
paper. We'd like to point out immediately that the sections 3, 4 and 5 are written independently of each other and can be read in arbitrary order.

\begin{itemize}
\item
In Section \ref{prelim} we recall some background information and set up notation that is 
needed in the sections to follow.

\item
In Section \ref{NSD} we define several versions of North-South dynamics (in a  general context - without reference to $\Out(\FN)$ or currents) and derive the main criterion 
(Propositions \ref{convergence-criterion} and \ref{NS-for-roots})
used in the proof of Theorem \ref{mainthm-intro}. 
\item
Section \ref{train-tracks} is entirely devoted to train track technology, addressing issues such as indivisible Nielsen paths (INPs), exponential growth of the number of illegal turns under backwards iteration, etc. Moreover, we recall and investigate the notion of \emph{``goodness"} which plays a key role in our analysis. We conclude Section \ref{train-tracks} with Proposition \ref{backforthgoodness} 
(see also Remark \ref{back-to-ht}). This proposition is a crucial tool in our approach, since  it permits the restriction of our attention in the convergence approximation to conjugacy classes with large goodness.
\item
In Section \ref{symbolic} 
we present some standard notions from symbolic dynamics and use the main result from our previous paper \cite{LU1} on limit frequencies of substitutions in order to 
define the limit 
simplexes 
$\Delta_+(\phi)$ and $\Delta_-(\phi)$
and show their $\phi$-invariance (Corollary \ref{delta-invariance}).
\item
In Section \ref{convergence} the crucial convergence approximation is calculated for conjugacy classes with large goodness (Lemma \ref{rationalconvergence}), done carefully over several pages. The main results are derived from ``putting together the pieces'' derived previously.
We finish this section with a particular 
``free products'' 
example of 
hyperbolic outer automorphisms which are not fully irreducible. 
\end{itemize}
{\bf Acknowledgements:}  We would heartfully like to thank Ilya Kapovich for his constant encouragement and support, as well as for 
many helpful mathematical and organizational suggestions.
We would also like to 
thank Arnaud Hilion and Matt Clay for useful 
exchanges and 
helpful comments. 
We thank the anonymous referee who provided detailed feedback, and prompted us to clarify several points and improve the exposition. 
The second author is grateful to Chris Leininger for useful discussions, support, and helpful comments.
\section{Preliminaries}
\label{prelim}

We first recall facts about graphs, paths, and maps between graphs. 
For an even more detailed discussion of these notions we refer reader to Section 2 of \cite{DKL}. 
For a streamlined version which uses the standard conventions the reader is referred to \cite{LUv2}.

\subsection{Graphs}
\label{graphs}
A \emph{topological graph} $\Gamma$ is a 1-dimensional finite cell
complex. The $0$-cells of $\Gamma$ are called \emph{vertices} and the set of vertices is denoted by $V\Gamma$.  The $1$-cells of $\Gamma$ are called \emph{topological edges} and the set of topological edges is denoted by $E_{top}\Gamma$. Each topological edge admits exactly two orientations, and we call a topological edge with a choice of orientation an \emph{oriented edge}. The set of all oriented edges is denoted by $E\Gamma$. Given $e\in E\Gamma$, we denote the edge with the opposite orientation by $\bar{e}$. Given a finite set of points in a topological graph $\Gamma$, one obtains a topological graph $\Gamma'$ as the \emph{subdivision}, in which the vertex set contains the original vertex set, together with the finite set of points.

Let $\Gamma$ be a topological graph. We choose an orientation for each edge, and denote the set of positively (resp. negatively) oriented edges by $E^{+}\Gamma$ ( resp. $E^{-}\Gamma$). Given an edge $e\in E^{+}\Gamma$, the initial vertex of $e$ is denoted by $o(e)$ and the terminal vertex of $e$ is denoted by  $t(e)$.

The graph $\Gamma$ is equipped with a natural metric called the \emph{simplicial metric} which is obtained by identifying each edge $e$ of $\Gamma$ with the interval $[0,1]$ via a characteristic map $\alpha_{e}:[0,1]\to e$, meaning that $\alpha$ is continuous on $[0,1]$ and the restriction to $(0,1)$ is an orientation preserving homeomorphism onto the interior of $e$.   

\subsection{Paths}
A \emph{combinatorial edge path} $\gamma$ of \emph{simplicial length $n\ge1$} is a concatenation $e_1e_2\ldots e_n$ of oriented edges of $\Gamma$ where $t(e_i)=o(e_{i+1})$.  The \emph{simplicial length} of an edge path $\gamma$ in $\Gamma$ is denoted by $|\gamma|_{\Gamma}$, and if it is clear from the context, we  suppress $\Gamma$ and write $|\gamma|$.

A \emph{topological edge path of simplicial length $n\ge1$} is a continuous map $p:[a,b]\to\Gamma$ such that there exists a combinatorial edge path $\gamma_p=e_1e_2\ldots e_n$ and a subdivision $a=c_0<c_1<c_2<\ldots<c_n=b$ such that for each $1\le i\le n$ we have $p(c_{i-1})=o(e_i)$, and $p(c_i)=t(e_i)$ and the restriction of $p$ to $(c_{i-1},c_i)$ is an orientation preserving homeomorphism onto the interior of $e_i$. We say that $\gamma_p$ is the combinatorial edge path associated to $p$. 

Let $\Gamma$ be equipped with the simplicial metric as above. A topological edge path $p:[a,b]\to\Gamma$ is a \emph{PL edge path} if for every edge $e_i$ in the combinatorial edge path $\gamma_p$ the composition $ \alpha_{e_i}^{-1}\circ p:(c_{i-1},c_{i})\to(0,1)$ is the unique orientation preserving affine homeomorphism from $(c_{i-1},c_i)$ to $(0,1)$. By not requiring that $p(a)$ and $p(b)$ are vertices in $\Gamma$, the concept of PL edge paths easily extend to \emph{PL paths}.

A \emph{combinatorial edge path of simplicial length $0$} consists of a vertex in $\Gamma$. A (degenerate) edge path $\gamma$ that consists of a vertex is called \emph{trivial}. A combinatorial edge path $\gamma$ is called \emph{reduced} if $e_i\neq\bar{e}_{i+1}$ for all $i=1,\ldots, n-1$. A reduced edge path is called \emph{cyclically reduced} if $t(e_n)=o(e_1)$ and $e_n\neq \bar{e}_1$.  A topological or PL  edge path is called \emph{reduced (resp. trivial)} if the corresponding combinatorial edge path is reduced (resp. trivial). More generally, a PL path is called \emph{reduced} if $p:[a,b]\to\Gamma$ is locally injective. 

\subsection{Graph maps}\label{graphmaps}
A \emph{topological graph map} $f:\Gamma\to\Gamma$ is a continuous map that sends vertices to vertices, and edges to topological edge paths. The latter means that for any edge $e$ and a characteristic map $\alpha_e:[0,1]\to \Gamma$,
the composition $f\circ\alpha_{e}:[0,1]\to\Gamma$ is a topological edge path as above. 

A \emph{linear graph map} is a topological graph map $f:\Gamma\to\Gamma$ as above which, in addition, satisfies the following condition: For every $e\in\Gamma$, the restriction of $f$ to $e$ is a PL edge path from $f(o(e))$ to $f(t(e))$. 

Note that given a topological graph map $f:\Gamma\to\Gamma$  there is a linear graph map from $\Gamma$ to itself that is homotopic to $f$ relative to vertices.

\begin{defn}
\label{expanding-and-D}
We say that a topological graph map $f:\Gamma\to\Gamma$ is \emph{regular} if for all $e \in E\Gamma$ the (combinatorial) edge path (associated to) $f(e)$ is non-trivial and reduced.  The map $f$ is called {\em expanding} if for every edge $e\in E\Gamma$ there is an exponent $t \geq 1$ such that the edge path $f^t(e)$ has simplicial length $|f^t(e)| \geq 2$.
\end{defn}

A turn in $\Gamma$ is a pair $(e_1,e_2)$ 
where $o(e_1)=o(e_2)$. A turn is called \emph{non-degenerate} if $\bar{e}_1\neq e_2$, otherwise it is called  \emph{degenerate}. A regular graph map $f:\Gamma\to\Gamma$ induces a \emph{derivative map} $Df:E\Gamma\to E\Gamma$ where $Df(e)$ is the first edge of the combinatorial edge path associated to $f(e)$. The derivative map induces a map $Tf$ on the set of turns defined as $Tf((e_1,e_2)):=(Df(e_1),Df(e_2))$.  A turn 
$(e_1,e_2)$ 
is called \emph{legal} if $Tf^{n}((e_1,e_2))$ is non-degenerate for all $n\ge0$. An edge path $\gamma=e_1e_2\ldots e_n$ is called legal if every turn $(e_i,\bar{e}_{i+1})$ in $\gamma$ is legal. 

\begin{rem-conv}
(1) 
For the rest of this paper, a \emph{path} means a \emph{PL path}. Also note that, given a combinatorial edge path $\gamma$ in $\Gamma$ there is always a PL edge path $p:[a,b]\to\Gamma$ such that $\gamma_p=\gamma$. Hence, by an edge path we will either mean a PL edge path or a combinatorial edge path and won't make a distinction unless it is not clear from the context. 

\noindent
\smallskip
(2)
Similarly, from now on,  unless otherwise stated, a {\em graph map} means a linear graph map as defined above.
\end{rem-conv}

\begin{defn}
A graph map $f:\Gamma\to\Gamma$ is called a \emph{train track map} if 
for every edge $e$ and any $t \geq 1$ the edge path $f^{t}(e)$ is legal.
\end{defn}

\subsection {Markings and topological representatives}\label{markings}

Let $\FN$ denote a free group of finite rank $N \geq 2$ which we fix once and for all.
A graph $\Gamma$ is said to be {\em marked}, if it is equipped with a {\em marking isomorphism} $\theta: \FN \overset{\cong}{\longrightarrow} \pi_1 \Gamma$. Markings are well defined only up to inner automorphisms, which is why we suppress the choice of a base point in $\Gamma$.

A homotopy equivalence $f: \Gamma \to \Gamma$ is a {\em topological representative} of an automorphism $\phi \in \Out(\FN)$ with respect to the marking $\theta: \FN \overset{\cong}{\longrightarrow} \pi_1 \Gamma$ if one has
$\phi = \theta^{-1} f_* \theta$.
A graph map $f:\Gamma\to\Gamma$ 
is called a \emph{train track representative} for $\varphi$ if $f$ is a topological representative for $\varphi$ and $f$ 
is a train track map. 

\begin{rem}
\label{make-expanding}
If a topological graph map $f: \Gamma \to \Gamma$ represents a hyperbolic outer automorphism 
$\phi$ of $F_N$, then the hypothesis that $f$ be expanding is easy to satisfy:  It suffices to contract all edges which are not expanded by any iterate $f^t$ to an edge path of length $\geq 2$:  The  contracted subgraph must be a forest, as otherwise some $f^t$ would fix a non-contractible loop and hence $\phi^t$ would fix a non-trivial conjugacy class of $\pi_1(\Gamma)\cong F_N$,
contradicting the assumption that $\phi$ is hyperbolic. It is easy to see that, if $f$ is a train track map, then this property is inherited by the above ``quotient map''.
\end{rem}

Given an (not necessarily reduced) edge path $\gamma\in\Gamma$, let $[\gamma]$ denote the reduced edge path which is homotopic to $\gamma$ relative to endpoints. The following is a classical fact for free group automorphisms:

\begin{lem}[Bounded Cancellation Lemma]\cite{Coo}\label{BCL} Let $f:\Gamma\to\Gamma$ be topological graph map that is a homotopy equivalence. There exist a constant $C_{f}$, depending only on $f$, such that for any reduced 
edge path $\rho=\rho_{1}\rho_{2}$ in $\Gamma$ one has
\[
|[f(\rho)]|\ge|[f(\rho_1)]|+|[f(\rho_2)]|-2C_{f}.
\]
That is, at most $C_f$ terminal edges of $[f(\rho_1)]$ 
are cancelled against 
$C_f$ initial edges of $[f(\rho_2)]$ when we concatenate them to obtain $[f(\rho)]$. 
\end{lem}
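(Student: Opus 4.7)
The plan is to pass to the universal cover $\tilde\Gamma$ of $\Gamma$ and exploit the fact that $\tilde\Gamma$ is a simplicial tree. First I would pick an equivariant lift $\tilde f\colon\tilde\Gamma\to\tilde\Gamma$ of $f$. Since $\Gamma$ has only finitely many edges, $\tilde f$ is Lipschitz with constant $L_f:=\max_{e\in E\Gamma}|f(e)|$ with respect to the lifted simplicial metric, and in a tree every reduced edge path is a geodesic whose length equals the distance between its endpoints.

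The second step is to use that $f$ is a homotopy equivalence in order to install a uniform quasi-isometric lower bound. I would choose a homotopy inverse $g\colon\Gamma\to\Gamma$ and a homotopy $H$ between $g\circ f$ and $\mathrm{id}_\Gamma$, then lift the data compatibly to get an equivariant $\tilde g$ and an equivariant homotopy $\tilde H$. Since $\Gamma$ is compact and the track of $H$ through any point is a continuous path in $\Gamma$, the lifted tracks have a uniform length bound, giving a constant $B\ge 0$ such that $d(\tilde g\tilde f(x),x)\le B$ for every $x\in\tilde\Gamma$. Combined with the Lipschitz bound $L_g$ for $\tilde g$, this yields
\[
d(\tilde f(x),\tilde f(y))\;\ge\;\tfrac{1}{L_g}\bigl(d(x,y)-2B\bigr).
\]
In particular, the restriction of $\tilde f$ to any reduced edge path $\tilde\rho$ in $\tilde\Gamma$ is a $(\lambda,\mu)$-quasi-geodesic path with constants $\lambda,\mu$ depending only on $f$.

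Third, I would lift the concatenation $\rho=\rho_1\rho_2$ to $\tilde\rho=\tilde\rho_1\tilde\rho_2$, meeting at a vertex $\tilde v$ and with overall endpoints $\tilde u,\tilde u'$. In the tree $\tilde\Gamma$ the reduced paths $[f(\rho_1)]$ and $[f(\rho_2)]$ correspond to the tree-geodesics $[\tilde f(\tilde u),\tilde f(\tilde v)]$ and $[\tilde f(\tilde v),\tilde f(\tilde u')]$, while $[f(\rho)]$ corresponds to $[\tilde f(\tilde u),\tilde f(\tilde u')]$, and these three geodesics form a tripod. A direct tripod computation shows that the amount of cancellation in the concatenation $[f(\rho_1)]\cdot[f(\rho_2)]$ is exactly twice the tree-distance from $\tilde f(\tilde v)$ to $[\tilde f(\tilde u),\tilde f(\tilde u')]$, i.e.\ twice the Gromov product $\langle \tilde f(\tilde u),\tilde f(\tilde u')\rangle_{\tilde f(\tilde v)}$. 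Applying the Morse property for trees --- every $(\lambda,\mu)$-quasi-geodesic arc lies in the $K$-neighborhood of the geodesic between its endpoints, for some $K=K(\lambda,\mu)$ --- and observing that $\tilde f(\tilde v)$ lies on the quasi-geodesic $\tilde f(\tilde\rho)$, I conclude that this distance is at most $K$, so $C_f:=K$ does the job.

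The main obstacle is the second step: upgrading the purely Lipschitz control of $\tilde f$ to a quasi-isometric embedding via the homotopy inverse, which requires an equivariant lift of the downstairs homotopy and a uniform bound on the lengths of the lifted tracks in order to control the additive constant $B$. Once that estimate is in place, the tree geometry of $\tilde\Gamma$ reduces the bounded-cancellation statement to a standard Morse-type fact about quasi-geodesics in trees.
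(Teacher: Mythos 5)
The paper itself does not prove the Bounded Cancellation Lemma; it cites it from Cooper \cite{Coo}, so there is no in-text argument to compare yours against. Your proof --- pass to the universal covering tree $\tilde\Gamma$, show that the lift $\tilde f$ is a quasi-isometry using a homotopy inverse, note by a tripod computation that the cancellation in the concatenation $[f(\rho_1)]\cdot[f(\rho_2)]$ equals $2\,d\bigl(\tilde f(\tilde v),[\tilde f(\tilde u),\tilde f(\tilde u')]\bigr)$, and then bound that distance by the Morse lemma (stability of quasi-geodesics in a tree) --- is correct, and it is the standard geometric proof of this result in the $\Out(F_N)$ literature.

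One technical point deserves smoothing. A continuous path in a compact graph need not have finite length, so the assertion that ``the lifted tracks have a uniform length bound'' is not automatic for an arbitrary continuous homotopy $H$ from $g\circ f$ to the identity. The easiest repair is to avoid tracks altogether: once $\tilde g$ and $\tilde f$ have been chosen so that $\tilde g\tilde f$ is genuinely $\pi_1\Gamma$-equivariant (adjusting by a deck transformation to kill the inner automorphism coming from the basepoint-free homotopy), the displacement function $x\mapsto d\bigl(\tilde g\tilde f(x),x\bigr)$ is continuous and $\pi_1\Gamma$-invariant, hence descends to a continuous function on the compact graph $\Gamma$ and is therefore bounded by some $B$. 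Alternatively, one may replace $H$ by a piecewise-linear homotopy, which always exists between maps of finite graphs and whose tracks do have uniformly bounded length. Either way you recover the estimate $d\bigl(\tilde f(x),\tilde f(y)\bigr)\ge L_g^{-1}\bigl(d(x,y)-2B\bigr)$, after which the quasi-geodesic/Morse-lemma argument goes through exactly as you describe.
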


\begin{rem}
\label{non-ht}
In 
Section \ref{train-tracks} of
this paper we will consider train track maps $f: \Gamma \to \Gamma$ that are not assumed to be homotopy equivalences. We do, however, assume that $f$ possesses a {\em cancellation bound}: there is an integer $C \geq 0$ such that for any two legal edge paths $\gamma$ and $\gamma'$ with common initial vertex but distinct initial edges the backtracking path $\gamma''$ at the image of the concatenation point of the non-reduced path $f(\bar \gamma \circ \gamma')$ has combinatorial length $|\gamma''| \leq C$.  The smallest such integer $C$ is denoted by $C(f)$.

The reason for not imposing the homotopy equivalence condition is 
to make the results and techniques of this paper available for the study of free group endomorphisms, as well as for the use of more general type of train track maps in the context of free group automorphisms.
\end{rem}

\begin{defn}
\label{defn-INPs}
A path $\eta$ in $\Gamma$ which crosses over precisely one illegal turn is called a {\em periodic indivisible Nielsen path} (or {\em INP}, for short), if for some exponent $t \geq 1$ one has $[f^t(\eta)] = \eta$. The smallest such $t$ is called the period of $\eta$. A path $\gamma$ is called \emph{pre-INP} if its image under $f^{t_0}$ is an INP for some $t_0\ge1$. 
The illegal turn on $\eta = \gamma' \circ  \bar{\gamma}$ is called the {\em tip} of $\eta$, while the two maximal initial legal subpaths $\gamma'$ and $\gamma$, of $\eta$ and $ \bar{\eta}$ respectively, are 
called the {\em branches} of $\eta$. 
A \emph{multi-INP} or a \emph{Nielsen path} is a legal concatenation of finitely many INP's. 
\end{defn}
 
If $f$ is an expanding train track map, then any non-trivial reduced path $\eta$, such that $f^t(\eta)$ is homotopic to $\eta$ relative to endpoints for some $t\ge 1$, can be written as a legal concatenation of finitely many INP's.
This can be seen by a direct elementary argument; alternatively it follows from 
Proposition 
\ref{pseudo-legal-iterate}. 

The reader should be aware of the fact that a priori the endpoints of an INP may not be vertices of $\Gamma$ (compare with Convention \ref{endpoints-of-INPs} below).

\subsection{Geodesic Currents on Free Groups}
\label{sec:currents}
Let $\FN$ be the free group of finite rank $N \geq 2$, and
denote by $\partial F_N$ the Gromov boundary of $F_N$. The double boundary of $F_N$ is defined by 

\[
\partial^{2}F_N:=(\partial F_N\times\partial F_N)\smallsetminus\Delta,
\]
where $\Delta$ denotes the diagonal.
The {\em flip map} on $\partial^2\FN$ is defined by
\[
(\xi,\zeta) \mapsto (\zeta, \xi).
\]
A \emph{geodesic current} 
$\mu$
on $\FN$ is a 
locally finite, Borel measure on $\partial^2\FN$ which is flip and $F_N$-invariant. The set of all geodesic currents on $\FN$ is denoted by $\Curr(F_N)$. The space of currents is naturally equipped with the 
weak*-topology. In particular, a sequence of currents $\mu_{n}\in \Curr(\FN)$ satisfies
\[
\lim_{n\to\infty}\mu_{n}=\mu\ \  \text{iff}\ \   \lim_{n\to\infty}\mu_n(S_1\times S_2)=\mu(S_1\times S_2)
\]
for any two disjoint Borel subsets $S_1,S_2\subset \partial\FN$.

An immediate example of a geodesic current is 
given by any non-trivial
conjugacy class in $\FN$: For an element $g\in\FN$ which is not a proper power (i.e. $g\ne h^{k}$ for 
any $h\in\FN$ and 
any $k\ge2$)  define the \emph{counting current} $\eta_g$ 
as follows:
for any Borel set $S\subset\partial^{2}\FN$
the value 
$\eta_g(S)$ is the number of $\FN$-translates of $(g^{-\infty},g^{\infty})$ or of  $(g^{\infty},g^{-\infty})$ 
that 
are contained in 
$S$. For an arbitrary element 
$h\in\FN \smallsetminus \{1\}$ 
write $h=g^{k}$, where $g$ is not a proper power, and define 
$\eta_{h}:=k\eta_{g}$. It is easy to see that this definition is independent of the particular representative of the conjugacy class of 
$h$, 
so that $\eta_h$ depends only on $[h] \subset \FN$.
The set of all non-negative real multiples of a counting current is called \emph{rational currents}, and it is an important fact that rational currents are dense in $\Curr(\FN)$, see \cite{Ka1}, \cite{Ka2}. 

Let 
$\Phi\in \Aut(F_N)$ 
be an automorphism of $\FN$.  Since $\Phi$ 
induces
homeomorphisms on $\partial \FN$ and on $\partial^2\FN$,
there is a natural action of $Aut(F_N)$ on the space of currents:
For any $\Phi\in Aut(\FN)$, $\mu\in \Curr(\FN)$ and 
any Borel subset $S \subset \partial^{2}F_N$ 
one has
\[
\Phi\mu(S):=\mu(\Phi^{-1}(S)). 
\]
Note that any inner automorphism of $\FN$ acts trivially, so that the above action factors through the quotient 
of $\Aut(\FN)$ by all inner automorphisms to give a well defined action of $\Out(F_N)$ on $\Curr(\FN)$. 

The space of \emph{projectivized geodesic currents} is defined as 
the quotient 
\[
\mathbb{P}\Curr(\FN)=:\Curr(F_N)\big/\sim
\]
where $\mu_1\sim\mu_2$ in $\Curr(F_N)$ if and only $\mu_1=\lambda\mu_2$ for some $\lambda>0$. We denote the projective class of the current $\mu$ by $[\mu]$. 

We derive from the above stated facts:

\begin{prop}
\label{current-space}
(1)
The space $\mathbb{P}\Curr(F_N)$ is a compact 
metric space, and the above actions of $Aut(F_N)$ and $\Out(F_N)$ on $\Curr(F_N)$ descend to well defined actions on $\mathbb{P}\Curr(F_N)$ by 
setting
\[
\phi[\mu]:=[\phi\mu]. 
\]

\smallskip
\noindent
(2)
The subset of $\PCurr$ defined by all rational currents is dense in $\PCurr$.
\end{prop}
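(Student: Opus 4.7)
The plan is to reduce both statements to the standard machinery of cylinder functionals on $\partial^2 F_N$. The key preliminary observation is that the $F_N$-invariant Borel measures on $\partial^2 F_N$ are determined by their values on the countable collection of cylinder sets $\mathrm{Cyl}(w) \subset \partial^2 F_N$ indexed by non-trivial reduced words $w$ over a fixed free basis $A$ of $F_N$, where $\mathrm{Cyl}(w)$ consists of pairs $(\xi,\zeta)$ whose bi-infinite geodesic crosses $w$ at a prescribed location. These values $\langle \mu, w \rangle := \mu(\mathrm{Cyl}(w))$ are non-negative reals satisfying the Kirchhoff-type consistency equations $\langle \mu, w \rangle = \sum_{a} \langle \mu, wa \rangle = \sum_{a} \langle \mu, aw \rangle$ (with $a$ ranging over generators that do not create backtracking), together with the flip symmetry $\langle \mu, w \rangle = \langle \mu, w^{-1} \rangle$.

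For part (1), I would first verify compactness by normalizing: define $\|\mu\|_A := \sum_{a \in A} \langle \mu, a \rangle$, and note that any non-zero current has $\|\mu\|_A > 0$ (since the measure is non-zero and $F_N$-invariant, hence detects some cylinder). The set $\Sigma_A := \{\mu \in \Curr(F_N) : \|\mu\|_A = 1\}$ is then a cross-section for the projectivization, and compactness follows by a diagonal extraction argument: any sequence in $\Sigma_A$ has its cylinder values $\langle \mu_n, w \rangle$ bounded in $[0,1]$ by the consistency relations, so after passing to a subsequence we get pointwise convergence on every $w$, and the limit satisfies the same Kirchhoff/flip relations and thus defines a current in $\Sigma_A$. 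Metrizability is then immediate from the countable separating family $\{w\}$, e.g.\ via the explicit metric
\[
d(\mu,\mu') \;:=\; \sum_{w} 2^{-|w|} \, \frac{|\langle \mu, w\rangle - \langle \mu', w\rangle|}{1 + |\langle \mu, w\rangle - \langle \mu', w\rangle|},
\]
whose topology agrees with the weak-$*$ topology restricted to $\Sigma_A$.

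Still for part (1), the action is well-defined on the projective quotient because the pushforward measure $\Phi \mu$ is linear in $\mu$: for any $\lambda > 0$, $(\Phi(\lambda\mu))(S) = (\lambda\mu)(\Phi^{-1}(S)) = \lambda \, \Phi\mu(S)$, so $\Phi(\lambda\mu) = \lambda\, \Phi\mu$, which means $\Phi$ sends the equivalence class $[\mu]$ to the well-defined class $[\Phi\mu]$. Since inner automorphisms were already observed to act trivially on $\Curr(F_N)$, this descends to an action of $\Out(F_N)$; continuity of $\phi$ on $\mathbb{P}\Curr(F_N)$ follows from continuity of $\Phi$ on $\Curr(F_N)$ under weak-$*$ convergence, since $\Phi$ maps cylinder sets to finite unions of cylinder sets.

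For part (2), I would follow Kapovich's standard argument, which is what the bulk of the work lies in. Given any $\mu \in \Sigma_A$ and any finite set of words $W$ together with a tolerance $\epsilon > 0$, it suffices to produce a rational current $\eta_{g_n}/\|\eta_{g_n}\|_A$ with $|\langle \eta_{g_n}/\|\eta_{g_n}\|_A, w\rangle - \langle \mu, w\rangle| < \epsilon$ for all $w \in W$. This is done by choosing a large enough word length $L$ and a cyclically reduced word $g_n$ of length roughly $n$ whose frequencies of subwords of length $\le L$ approximate the values $\langle \mu, w\rangle$; existence of such $g_n$ is an exercise using the Kirchhoff relations (one builds an Eulerian-type closed path in a de Bruijn-style graph weighted by the prescribed frequencies). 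The main obstacle in this paragraph is verifying the approximation on cylinders $w$ whose length $|w|$ is comparable to or larger than $L$, but this is controlled by allowing $L \to \infty$ and invoking the weighting $2^{-|w|}$ in the metric; alternatively, one invokes the cited results of Kapovich directly, as the authors do.
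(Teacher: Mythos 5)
Your proposal is correct, and it is consistent with the paper, which in fact gives no proof of this proposition: the statement is introduced by ``We derive from the above stated facts'' and is simply a consolidation of what is recalled in Section 2.5 (the weak-* topology, the Kolmogorov/Kirchhoff characterization of currents via cylinder values, the triviality of inner automorphisms, and the density of rational currents cited to Kapovich \cite{Ka1,Ka2}). Your cross-section $\Sigma_A$, the diagonal extraction under the Kirchhoff bound, the explicit metric, the linearity of pushforward, and the de Bruijn/Eulerian-circuit sketch of the density argument are all accurate expansions of exactly those cited sources, so the route is essentially the same one the paper points to.
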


For any marked graph $\Gamma$ the marking isomorphism 
$\theta: \FN \to \pi_1 \Gamma$
gives rise to canonical $\FN$-equivariant homeomorphisms between the Gromov boundaries and double boundaries of the group $F_N$ and of the universal cover  $\tilde{\Gamma}$, which for simplicity we also denote by $\theta$.  

Let $\tilde{\gamma}$
be a reduced edge path in $\tilde\Gamma$. We define the \emph{cylinder set} associated to $\tilde{\gamma}$ as follows:
\[
Cyl(\tilde{\gamma}):=\{(\xi,\zeta)\in\partial\FN\mid\tilde{\gamma}\ \text{is a subpath of}\ [\partial\theta(\xi),\partial\theta(\zeta)]\}
\]
where
$[\partial\theta(\xi),\partial\theta(\zeta)]$ denotes the biinfinite reduced edge path that joins 
the end $\partial\theta(\xi)$ to the end 
$\partial\theta(\zeta)$ of $\tilde{\Gamma}$. 

Let $\gamma$
be a reduced edge path in $\Gamma$ and let $\mu\in \Curr(F_N)$. Let $\tilde{\gamma}$ be a lift of $\gamma$ to $\tilde{\Gamma}$. 
We define 
\[
\langle\gamma,\mu\rangle
:= \mu(Cyl(\tilde{\gamma}))
\]
and note 
that the $F_N$-equivariance for geodesic currents 
implies 
that the value $\mu(Cyl_{\alpha}(\tilde{\gamma}))$ does not depend on the particular lift 
$\tilde \gamma$ 
we chose, so we have a well defined quantity $\langle\gamma,\mu\rangle$. 

Note also
that in case of a rational current $\mu = \eta_g$ 
the quantity $\langle\gamma,\mu\rangle
$ is given by the number of 
{occurrences} of $\gamma$ 
or of $\bar{\gamma}$ in 
the reduced loop 
$[\gamma]$ 
which represents the conjugacy class $[g]$.
Moreover, the action of $\Out(F_N)$ on rational currents is given explicitly by the formula
\begin{equation}
\label{rational-image}
\varphi \eta_g=\eta_{\varphi(g)}.
\end{equation}

An important fact about 
cylinder sets is that 
they 
form a basis for the topology on $\partial^{2}\FN$, 
so that 
a geodesic current $\mu$ is uniquely determined by the set of values $\langle\gamma,\mu\rangle$ 
for all non-trivial reduced edge paths $\gamma$ in $\Gamma$, see \cite{Ka2}.  

In particular, 
we have $\underset{n\to\infty}{\lim}\mu_{n}=\mu$ if and only if $\underset{n\to\infty}{\lim}\langle\gamma,\mu_{n}\rangle=\langle\gamma,\mu\rangle$ for all reduced edge paths $\gamma$ in $\Gamma$. 

Moreover, if we denote by $\cal P(\Gamma)$ the set of reduced edge paths in $\Gamma$, then the function 
$$\mu_\Gamma: \cal P(\Gamma) \to \R_{\geq 0}, \, \gamma \mapsto \langle\gamma,\mu\rangle$$
satisfies for all $\gamma \in \cal P(\Gamma)$:
\begin{enumerate}
\item
$\mu_\Gamma(\gamma) = \underset{\gamma' \in E_+(\gamma)}{\sum} \mu_\Gamma(\gamma') = \underset{\gamma' \in E_-(\gamma)}{\sum} \mu_\Gamma(\gamma'')$, and
\item
$\mu_\Gamma(\gamma) = \mu_\Gamma(\bar\gamma)$,
\end{enumerate}
where $E_+(\gamma) \subset \cal P(\Gamma)$ and  $E_-(\gamma) \subset \cal P(\Gamma)$ denote the set of reduced edge paths obtained from $\gamma$ by adding a further edge at the beginning or at the end respectively.

Any function $\mu'_\Gamma: \cal P(\Gamma) \to \R_{\geq 0}$ which satisfies (1) and (2) above is called a {\em Kolmogorov function}, and it is well known (see \cite{Ka1, Ka2}) that any such $\mu'_\Gamma$ defines uniquely a geodesic current $\mu$ on $\FN$ which satisfies $\mu_\Gamma = \mu'_\Gamma$.

Let 
$\Gamma$ be a marked graph
as above. Given a geodesic current $\mu\in \Curr(F_N)$, define the weight of $\mu$ with respect to $\Gamma$ as
\begin{equation}
\label{current-norm}
\|\mu\|_\Gamma:=\sum_{e\in E^+\Gamma}\langle e,\mu\rangle.
\end{equation}
The following criterion given in \cite{Ka2} plays a key role in our convergence estimates in 
Section 
\ref{convergence}. 

\begin{lem} Let $([\mu_n])_{n \in \N}$ 
be a sequence of currents in $\mathbb{P}\Curr(F_N)$. Then 
one has 
$\underset{n\to\infty}{\lim}[\mu_n]=[\mu]$ in $\mathbb{P}\Curr(F_N)$ if and only if 
\[
\lim_{n\to\infty}\frac{\langle \gamma,\mu_n\rangle}{\|\mu_n\|_{\Gamma}}=\frac{\langle \gamma,\mu\rangle}{\|\mu\|_{\Gamma}}
\]
holds 
for all reduced edge paths $\gamma$ in $\Gamma$. 
\end{lem}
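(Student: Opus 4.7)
The plan is to prove both directions by reducing to the cylinder-set criterion already recalled above — namely, that $\nu_n\to\nu$ in $\Curr(F_N)$ if and only if $\langle\gamma,\nu_n\rangle\to\langle\gamma,\nu\rangle$ for every reduced edge path $\gamma$ in $\Gamma$ — after performing the normalization $\nu:=\mu/\|\mu\|_\Gamma$ and $\nu_n:=\mu_n/\|\mu_n\|_\Gamma$.

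A preliminary observation I will need is that $\|\mu\|_\Gamma>0$ for every non-zero current $\mu$, so that these normalizations are well defined. Iterating the Kolmogorov identity $\mu_\Gamma(\gamma)=\sum_{\gamma'\in E_+(\gamma)}\mu_\Gamma(\gamma')$ (adding one edge at a time to the front of $\gamma$) gives $\langle e,\mu\rangle\ge\langle\gamma,\mu\rangle$ for the initial edge $e$ of any $\gamma$; applying this to a path $\gamma$ with $\langle\gamma,\mu\rangle>0$ (which exists since $\mu\neq 0$) yields $\|\mu\|_\Gamma\ge\langle e,\mu\rangle>0$.

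For the forward direction, assume $[\mu_n]\to[\mu]$ in $\mathbb{P}\Curr(F_N)$. By definition of the quotient topology, together with the fact that the affine slice $\{\nu\in\Curr(F_N):\|\nu\|_\Gamma=1\}$ meets each $\R_{>0}$-orbit in exactly one point, there exist scalars $\lambda_n>0$ such that $\lambda_n\mu_n\to\mu$ in $\Curr(F_N)$. For every reduced path $\gamma$ this gives $\langle\gamma,\lambda_n\mu_n\rangle\to\langle\gamma,\mu\rangle$; summing this over the finite set $E^+\Gamma$ yields $\lambda_n\|\mu_n\|_\Gamma\to\|\mu\|_\Gamma>0$. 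Dividing then produces the desired ratio convergence
\[
\frac{\langle\gamma,\mu_n\rangle}{\|\mu_n\|_\Gamma}=\frac{\langle\gamma,\lambda_n\mu_n\rangle}{\|\lambda_n\mu_n\|_\Gamma}\;\longrightarrow\;\frac{\langle\gamma,\mu\rangle}{\|\mu\|_\Gamma}.
\]

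For the converse, the hypothesis reads exactly as $\langle\gamma,\nu_n\rangle\to\langle\gamma,\nu\rangle$ for all reduced $\gamma$. Applying the cylinder-set criterion gives $\nu_n\to\nu$ in $\Curr(F_N)$, and continuity of the projection $\Curr(F_N)\smsm\{0\}\to\mathbb{P}\Curr(F_N)$ then yields $[\mu_n]=[\nu_n]\to[\nu]=[\mu]$. I do not expect any serious obstacle: the only step requiring any care is the finite-summation passing from $\lambda_n\mu_n\to\mu$ to $\lambda_n\|\mu_n\|_\Gamma\to\|\mu\|_\Gamma$, which is immediate from $|E^+\Gamma|<\infty$. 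Everything else is a direct unpacking of the quotient-topology definition and of the cylinder-set characterization of weak$^*$ convergence in $\Curr(F_N)$.
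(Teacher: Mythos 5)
The paper does not prove this lemma; it is quoted from \cite{Ka2} and used without argument, so there is no internal proof to compare yours against. Your proof is the standard one, via normalization and the cylinder-set criterion, and it is correct in substance. The one step that is glossed over is the forward direction's lift of $[\mu_n]\to[\mu]$ in $\PCurr$ to scalars $\lambda_n>0$ with $\lambda_n\mu_n\to\mu$ in $\Curr(\FN)$: this is not a consequence of the definition of the quotient topology alone (sequences need not lift along arbitrary quotient maps), and the observation that the affine slice is a set-theoretic section of the $\R_{>0}$-orbits is likewise not enough by itself. What actually makes the lift work is either that the quotient map $\Curr(\FN)\smallsetminus\{0\}\to\PCurr$ is \emph{open} (the $\R_{>0}$-action is by homeomorphisms), and open surjections between metrizable spaces do lift convergent sequences; or, closer in spirit to your slice, that $S=\{\nu\in\Curr(\FN):\|\nu\|_\Gamma=1\}$ is compact — Kolmogorov monotonicity bounds $\langle\gamma,\nu\rangle\le 1$ for every reduced $\gamma$, and then a Tychonoff argument applies — so the restricted projection $S\to\PCurr$ is a continuous bijection of compact Hausdorff spaces, hence a homeomorphism, and one may take $\lambda_n=\|\mu\|_\Gamma/\|\mu_n\|_\Gamma$ outright. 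With that step made explicit, everything else in your write-up — the positivity $\|\mu\|_\Gamma>0$ from Kolmogorov monotonicity, the finite sum over $E^+\Gamma$ giving $\lambda_n\|\mu_n\|_\Gamma\to\|\mu\|_\Gamma$, and the converse via the cylinder criterion plus continuity of the projection — is fine.
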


\section{North-South dynamics}
\label{NSD}
In this section we describe some general considerations for 
maps with North-South dynamics. We will keep the notation simple and general; at no point we will refer to the specifics of currents on free groups.  However, in this section we will prove the main criteria used in the remainder of the paper to establish the North-South dynamics result in our main theorem.

\begin{defn}
\label{NS-dyn} 
Let 
$f: X \to X$ be homeomorphism 
of a 
topological 
space $X$. 

(a) The map $f$ is said to have {\em(pointwise) North-South dynamics} if $f$ has two distinct fixed points $P_{+}$ and $P_{-}$, called \emph{attractor} and \emph{repeller}, such that for every $x \in X \smallsetminus \{P_+, P_-\}$ one has:
\[
\lim_{t \to \infty} f^t(x) = P_+ \qquad {\rm and} \qquad  \lim_{t \to - \infty} f^t(x) = P_{-}.
\]

\smallskip

(b)
The 
map $f$ is said to have {\em uniform North-South dynamics} if the following holds:
There exist two distinct
fixed points $P_{-}$ and $P_{+}$ of $f$, such that for every compact set $K \subset X \smallsetminus \{P_-\}$ 
and every neighborhood $U_+$ of $P_+$ there exists an integer $t_+ \geq 0$ such that for every $t \geq t_+$ one has:
\[
f^t(K) \subset U_{+}.
\]
Similarly, for every compact set $K \subset X \smallsetminus \{P_+\}$ 
and every neighborhood $U_-$ of $P_-$ there exists an integer $t_- \leq 0$ such that for every $t \leq t_-$ one has:
\[
f^t(K) \subset U_-.
\]
\end{defn}

It is easy to see that uniform North-South dynamics implies pointwise North-South dynamics. 

\begin{defn}
\label{generalized-NS-dyn}
A homeomorphism $f: X \to X$ of a 
topological 
space $X$
has {\em generalized uniform North-South dynamics} if there exist two disjoint compact $f$-invariant sets $\Delta _+ $ and $\Delta_-$ in $X$, such that the following hold:
\begin{enumerate}
\item[(i)]
For every compact set $K \subset X \smallsetminus \Delta_-$ 
and every neighborhood $U_+$ of $\Delta_+$ there exists an integer $t_+ \geq 0$ such that for every $t \geq t_+$ one has:
\[
f^t(K) \subset U_+
\]
\item[(ii)]
For every compact set $K \subset X \smallsetminus \Delta_+$ 
and every neighborhood $U_-$ of $\Delta_-$ there exists an integer $t_- \leq 0$ such that for every $t \leq t_-$ one has:
\[
f^t(K) \subset U_-
\]
\end{enumerate}
\end{defn}

More precisely, we say that the map $f$
has 
{\em generalized uniform 
North-South dynamics from $\Delta_-$ to $\Delta_+$}. 
Note that we interpret the phrase 
``$f$-invariant'' in its strong meaning, i.e. $f(\Delta_+) = \Delta_+$ and $f(\Delta_-) = \Delta_-$.

\begin{prop}
\label{convergence-criterion}

Let $f:X\to X$ be a homeomorphism of a compact 
metrizable 
space $X$. 
Let 
$Y \subset X$ be dense subset of $X$, and let $\Delta_{+}$ and $\Delta_{-}$ be two $f$-invariant sets in $X$ that are disjoint. Assume that the following criterion holds:

For every neighborhood $U$ of $\Delta_+$ and every neighborhood $V$ of $\Delta_-$ there exists an integer $m_0 \geq 1$ such that for any $m \geq m_0$ and any $y \in Y$ one has either $f^m(y) \in U$ or $f^{-m}(y) \in V$. 

Then $f^{2}$ has generalized uniform North-South dynamics from $\Delta_{-}$ to $\Delta_+$.
\end{prop}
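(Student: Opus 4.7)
The plan has three ingredients that together produce the generalized uniform North--South dynamics for $f^2$.

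\textbf{Step 1: Upgrade the dichotomy from $Y$ to all of $X$.} Given open neighborhoods $U\supset\Delta_+$ and $V\supset\Delta_-$, normality of the compact metric space $X$ produces open $U',V'$ with $\Delta_+\subset U'\subset\overline{U'}\subset U$ and $\Delta_-\subset V'\subset\overline{V'}\subset V$. Apply the hypothesized criterion to the pair $(U',V')$ to obtain $m_0$. For an arbitrary $x\in X$ choose $y_n\in Y$ with $y_n\to x$. For each fixed $m\geq m_0$ pigeonhole gives a subsequence $n_k$ along which either $f^m(y_{n_k})\in U'$ for all $k$ or $f^{-m}(y_{n_k})\in V'$ for all $k$; continuity of $f^{\pm m}$ together with the closedness of $\overline{U'}$ and $\overline{V'}$ then forces $f^m(x)\in \overline{U'}\subset U$ or $f^{-m}(x)\in\overline{V'}\subset V$. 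Thus the dichotomy holds for every $x\in X$.

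\textbf{Step 2: The substitution trick.} Substitute $y:=f^m(x)$, which lies in $X$ because $f$ is a homeomorphism. Since $f^m(y)=f^{2m}(x)$ and $f^{-m}(y)=x$, the strengthened dichotomy becomes, for every $m\geq m_0$ and every $x\in X$,
\[
f^{2m}(x)\in U \quad\text{or}\quad x\in V.
\]
Symmetrically, substituting $y:=f^{-m}(x)$ yields
\[
x\in U \quad\text{or}\quad f^{-2m}(x)\in V.
\]
The appearance of the even iterate $2m$ in these two implications is precisely why the conclusion is stated for $f^{2}$ rather than for $f$: the hypothesized criterion is symmetric in forward and backward iterates and does not by itself decide which dynamical direction applies to a given point, but after one substitution the alternative involving $V$ is decided by the position of $x$ itself, which we can control.

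\textbf{Step 3: Feed in the compact sets.} Given a compact set $K\subset X\smallsetminus\Delta_-$ and an open neighborhood $U$ of $\Delta_+$, compactness of $K$ together with the fact that $\Delta_-$ is closed (as required by Definition \ref{generalized-NS-dyn}) yields an open $V\supset\Delta_-$ with $V\cap K=\emptyset$. The first substitution of Step 2 applied to any $x\in K$ forces $f^{2m}(x)\in U$ for all $m\geq m_0$, hence $(f^{2})^{m}(K)\subset U$ for all $m\geq m_0$, which is condition (i) of generalized uniform North--South dynamics. The symmetric argument, with the roles of $\Delta_+$ and $\Delta_-$ swapped and using the second substitution, gives $(f^{2})^{-m}(K')\subset V$ for every compact $K'\subset X\smallsetminus\Delta_+$ and every open neighborhood $V$ of $\Delta_-$, establishing condition (ii). The $f$-invariance of $\Delta_\pm$ is inherited by $f^2$.

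The step that needs the most care is the first, where density of $Y$ and the pigeonhole/continuity argument must be combined carefully with the shrinking of $U,V$ to $U',V'$ so that the closures stay inside. The heart of the proof, however, is the conceptual observation in Step 2: recognizing that the substitutions $y=f^{\pm m}(x)$ turn the given symmetric dichotomy into two asymmetric statements in which the ``escape route'' towards $\Delta_\pm$ is controlled by the starting point, not by the iterate. Once this is spotted, Step 3 is immediate.
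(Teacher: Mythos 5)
Your proof is correct and rests on the same three ingredients as the paper's: shrinking the given neighborhoods so that their closures remain inside, exploiting density of $Y$ together with continuity of $f^{\pm m}$, and then the key observation that applying the two-sided dichotomy to the point $f^m(x)$ converts it into a one-sided statement controlled by the position of $x$, which is why the even iterate $2m$ appears. The only difference is organizational: you cleanly factor the argument into an abstract ``upgrade from $Y$ to $X$'' followed by a separate substitution step, whereas the paper interleaves the two by working directly with the set $f^m(\bar W)$ for an auxiliary open neighborhood $W \supset K$ disjoint from a shrunk $V_1$, and applying density inside that set. Your modular decomposition makes the substitution trick explicit and would be marginally easier to reuse, but the underlying mathematics is identical; both versions also rely on $\Delta_\pm$ being closed (implicit in the conclusion, since generalized North--South dynamics requires $\Delta_\pm$ compact), which you state and the paper leaves tacit.
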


\begin{proof}

Let 
$K \subset X\smallsetminus\Delta_{-}$ be compact, and let $U$ and $V$ be neighborhoods of $\Delta_{+}$ and $\Delta_{-}$ respectively. See Figure \ref{NSDfigure}.

\begin{figure}[h!]
\labellist
\small\hair 2pt
\pinlabel {$\Delta_{-}$} [ ] at 195 570
\pinlabel {$\Delta_{+}$} [ ] at 410 570
\pinlabel {$K$} [ ] at 300 490
\pinlabel {$W$} [ ] at 345 520
\pinlabel {$V$} [ ] at 220 640
\pinlabel {$U$} [ ] at 400 640
\pinlabel {$U'$} [ ] at 370 590
\pinlabel {\large{$X$}} [ ] at 160 670
\endlabellist
\begin{center}
\includegraphics[scale=0.7]{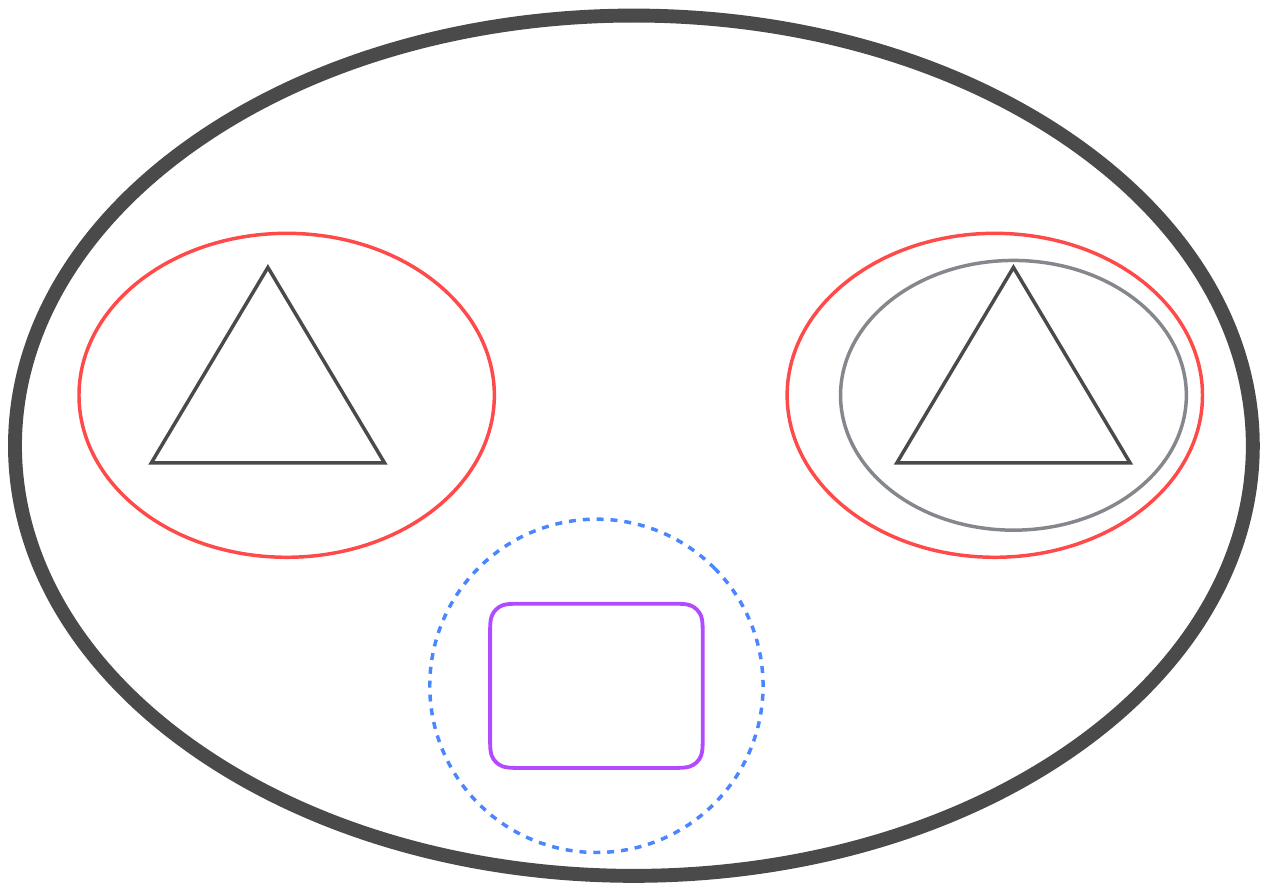}
\caption{}
\label{NSDfigure}
\end{center}
\end{figure}

Let $W$ be an
open neighborhood of $K$ such that $W\cap V=\emptyset$. 
Since $X$ is compact, the closure $\bar{W}$ is compact. 
Then $V_1
:= V \smallsetminus \bar{W}$ is 
again 
an open neighborhood of $\Delta_-$, moreover it is disjoint from $\bar{W}$.  Let $U_1$ be a neighborhood of $\Delta_+$ which has the property that its closure is contained in the interior of $U$.  Such a neighborhood exists because $X$ is a 
metrizable 
space. 

Let $m_0$ be as postulated in the criterion, applied to the neighborhoods $U_1$ and $V_1$, and pick any $m \geq m_0$.  Consider any $y \in Y \cap f^{m}(\bar{W})$.  Notice that $f^{-m}(y)$ is contained in $\bar{W}$, which is disjoint from $V_1$. Thus, by the assumed criterion, $f^m(y)$ must be contained in $U_1$.  

Since $W$ is open and $f$ is a homeomorphism, 
any dense subset of $X$ must intersect $f^{m}(\bar{W})$ in a subset that is dense in $f^{m}(\bar{W})$. This implies that $f^{m}(f^{m}(\bar{W}))\subseteq \bar{U_1}\subset U$. Since $K\subset\bar{W}$, this shows that $f^{2m}(K)\subset U$. 

Using the analogous 
argument for the inverse iteration we see that $f^{2}$ has generalized uniform North-South dynamics from $\Delta_{-}$ to $\Delta_{+}$. 
\end{proof}

\begin{prop}
\label{NS-for-roots}
Let $f: X \to X$ be a homeomorphism of a compact 
space $X$, and let $\Delta_{+}$ and $\Delta_{-}$
be disjoint $f$-invariant sets. Assume that some power $f^s$ with $s \geq 1$ has generalized uniform North-South dynamics from $\Delta_-$ to $\Delta_+$. 

Then the map $f$, too, has generalized uniform North-South dynamics from $\Delta_-$ to $\Delta_+$.
\end{prop}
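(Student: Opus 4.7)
The plan is to exploit the fact that $s$ is finite, so we can split the iterates of $f$ into finitely many residue classes modulo $s$ and apply the hypothesis on $f^s$ to each class separately.

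First I would verify the crucial preparatory fact: for each $r \in \{0, 1, \dots, s-1\}$ and each compact $K \subset X \smallsetminus \Delta_-$, the set $f^r(K)$ is again compact and disjoint from $\Delta_-$. Compactness is just continuity of $f^r$. Disjointness uses the strong $f$-invariance of $\Delta_-$ (emphasized in the remark following Definition~\ref{generalized-NS-dyn}): since $f(\Delta_-) = \Delta_-$ and $f$ is a homeomorphism, also $f^r(\Delta_-) = \Delta_-$, and hence
\[
f^r(K) \cap \Delta_- = f^r(K) \cap f^r(\Delta_-) = f^r(K \cap \Delta_-) = \emptyset.
\]
An analogous observation applies to $\Delta_+$.

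Next, given a neighborhood $U$ of $\Delta_+$, I would apply the generalized uniform North-South dynamics of $f^s$ separately to each compact set $f^r(K)$, $r = 0, 1, \dots, s-1$. This yields integers $M_0, M_1, \dots, M_{s-1} \geq 0$ such that $(f^s)^m\bigl(f^r(K)\bigr) \subset U$ for every $m \geq M_r$. Setting
\[
M \;=\; \max_{0 \leq r \leq s-1} \bigl(s M_r + r\bigr),
\]
any $n \geq M$ admits a unique decomposition $n = qs + r$ with $0 \leq r < s$, and the inequality $n \geq s M_r + r$ forces $q \geq M_r$. Consequently,
\[
f^n(K) \;=\; (f^s)^q\bigl(f^r(K)\bigr) \;\subset\; U,
\]
which establishes condition (i) of Definition~\ref{generalized-NS-dyn} for $f$.

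The analogous argument applied to $f^{-1}$ (noting that $(f^{-1})^s = (f^s)^{-1}$ inherits the generalized uniform North-South dynamics, now from $\Delta_+$ to $\Delta_-$) gives condition (ii). Taken together, these prove that $f$ has generalized uniform North-South dynamics from $\Delta_-$ to $\Delta_+$. There is no real obstacle here; the only delicate point is the reliance on the strong $f$-invariance of the two simplices to move $K$ forward by a bounded number of steps without entering $\Delta_-$.
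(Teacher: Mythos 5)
Your proof is correct and takes essentially the same approach as the paper: split the iterates of $f$ by residue class modulo $s$, use the strong $f$-invariance of $\Delta_\pm$ to keep the shifted compacts away from the wrong pole, and apply the hypothesis on $f^s$. The only cosmetic difference is that the paper applies the hypothesis once to the union $K' = K \cup f(K) \cup \cdots \cup f^{s-1}(K)$ and then takes $t_0$ for $K'$, whereas you apply it $s$ times to each $f^r(K)$ and take the maximum of the resulting bounds; the two bookkeeping schemes are interchangeable.
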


\begin{proof}
Let $K\subset X\smallsetminus\Delta_{-}$
be compact, 
and let $U$ be an open neighborhood of $\Delta_{+}$.

Set $K' := K \cup f(K) \cup \ldots \cup f^{s-1}(K)$, which is again 
compact. Note that the fact that  $K\subset X\smallsetminus\Delta_{-}$ and $f^{-1}(\Delta_-) = \Delta_-$ implies that $K'\subset X\smallsetminus\Delta_{-}$.  Indeed, $x\in K'$ implies that $x=f^{t}(y)$ for some $y\in K$ and for some $0\le t\le s-1$. 

Thus 
$x\in\Delta_-$ would imply that $y=f^{-t}(x)\in f^{-1}(\Delta_{-})=\Delta_{-}$, contradicting 
the assumption 
$K\cap\Delta_{-}=\emptyset$. 

From the hypothesis that $f^s$ has generalized uniform North-South dynamics from $\Delta_-$ to $\Delta_+$ it follows that there is a bound $t_0$ such that for all $t' \geq t_0$ one has $f^{t' s}(K') \subset U$. 

Hence, for any point 
$x\in K$ and any integer $t \geq s t_0$, 
we can 
write $t = k + s t'$ 
with 
$t' \geq t_0$ and $0 \leq k \leq s-1$ 
to obtain the desired fact
\[
f^{t}(x)=f^{k+st'}(x)=f^{st'}f^{k}(x)\in f^{st}(K')\subset U. 
\]
The analogous 
argument for $f^{-1}$ finishes the proof of the Lemma. 
\end{proof}

\section{Train-Tracks}	
\label{train-tracks}

In 
%Throughout 
this section we will consider train track maps which satisfy the following properties:

\begin{conv} 
\label{tt-map-convention}
Let $\Gamma$ be 
a finite connected graph, and 
let $f: \Gamma \to \Gamma$ be 
an expanding train track map which possesses a cancellation bound $C_f \geq 0$ 
as defined in Remark \ref{non-ht}.
We also assume that $f$ has been replaced by a positive power so that for some integers $\lambda'' \geq \lambda' >1$ we have, for any edge $e$ of $\Gamma$:
\begin{equation}
\label{1.1}
\lambda'' \geq |f(e)| \geq \lambda'
\end{equation}
and $\lambda',\lambda''$ is attained for some edges. 
\end{conv}

\subsection{Goodness}

${}^{}$
\smallskip

The following terminology was introduced by R. Martin in his thesis \cite{Martin}. 

\begin{defn} 
\label{goodness}
Let $f: \Gamma \to \Gamma$, $C_f$ and $\lambda'$
be as in Convention \ref{tt-map-convention}. Define the \emph{critical constant} $C$ for $f$ as 
$C:=\dfrac{C_{f}}{\lambda'-1}$. 
Let $\gamma$ be a reduced edge path in $\Gamma$. Any edge $e$ in $\gamma$ that is at least 
$C$ edges away from an illegal turn on $\gamma$ is called \emph{good}, where the distance (= number of edges traversed) is measured on $\gamma$.  An edge is called \emph{bad} if it is not {good}.  
Edge paths or loops, in particular subpaths of a given edge path, which consist entirely of good (or entirely of bad) edges are themselves called {\em good} (or {\em bad}).

For any edge path or a loop $\gamma$ in $\Gamma$
we define the {\em goodness} of $\gamma$ as the following quotient:
$$
\mathfrak{g}(\gamma) := \frac{\#\{\text{good edges of}\,  \gamma\}}{|\gamma|}\in[0,1]
$$

\end{defn}

We will now discuss some basic properties of the goodness of paths and loops. We first consider any legal edge path $\gamma$ in $\Gamma$ of length $|\gamma| = C$ and
compute:
\begin{equation}
\label{elementary}
|f(\gamma)| \geq \lambda' |\gamma|=\lambda' C = C_f + C
\end{equation}

\begin{lem}
\label{growth-of-good} 
Let $f: \Gamma \to \Gamma$ and $\lambda'$ be as in Convention \ref{tt-map-convention}. 
Let $\gamma$ be a reduced edge path or a cyclically reduced loop
in $\Gamma$. 
Then 
any good subpath $\gamma'$ of $\gamma$ has the property that no edge of $f(\gamma') = [f(\gamma')]$ is cancelled when $f(\gamma)$ is reduced, and that it consists entirely of edges that are good in $[f(\gamma)]$.
Hence, for any reduced loop $\gamma$ in $\Gamma$ we have:
$$\#\{{\rm good \,\,  edges \,\, in \,\,} [f(\gamma)]\}
\geq \lambda'  \cdot \#\{{\rm good \,\,  edges \,\, in \,\,} \gamma\}$$
\end{lem}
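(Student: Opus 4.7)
The plan is to exploit the definition of goodness twice: first to ensure that $\gamma'$ itself is legal, and second to guarantee that on each side of $\gamma'$ the legal buffer within $\gamma$ maps under $f$ to a segment long enough to absorb all cancellation coming from the adjacent illegal turns.

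First I would observe that since every edge of $\gamma'$ is at simplicial distance $\geq C$ from every illegal turn of $\gamma$, the interior turns of $\gamma'$ are all legal, so $\gamma'$ is legal and $f(\gamma') = [f(\gamma')]$. I would then let $\gamma_j$ denote the maximal legal subpath of $\gamma$ containing $\gamma'$, bounded (when they exist) by illegal turns $t_j, t_{j+1}$, and write $\gamma_j = \alpha\,\gamma'\,\beta$; the goodness hypothesis forces $|\alpha|\ge C$ and $|\beta|\ge C$ (the corresponding inequality being vacuous when the respective $t_j$ does not exist). The pivotal identity, immediate from $C = C_f/(\lambda'-1)$, is
\[
\lambda' C \;=\; C_f + C,
\]
which says that a legal segment of length $C$ stretches under $f$ to a legal image that can afford to have its endpoint eaten by a $C_f$-cancellation and still leave $C$ edges intact. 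Applying the Bounded Cancellation Lemma at $t_j$, at most $C_f$ initial edges of $f(\gamma_j) = f(\alpha)\,f(\gamma')\,f(\beta)$ are cancelled; since $|f(\alpha)| \geq \lambda'|\alpha| \geq \lambda' C = C_f + C$, the cancellation is absorbed inside $f(\alpha)$. A symmetric conclusion holds at $t_{j+1}$ for $f(\beta)$. This proves the first (non-cancellation) assertion of the lemma.

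For the second assertion I would use that $f$ is a train-track map, so each $f(\gamma_i)$ is itself legal; hence every illegal turn of $[f(\gamma)]$ must lie at the post-cancellation concatenation point corresponding to some $t_j$. By the estimate above, in $[f(\gamma)]$ each edge of $f(\gamma')$ sits at simplicial distance $\geq (C_f+C) - C_f = C$ from any such point, so every edge of $f(\gamma')$ is good in $[f(\gamma)]$. Finally, to obtain the counting inequality, I would apply both assertions to each individual good edge $e$ of $\gamma$ viewed as a length-one good subpath: $f(e)$ survives uncancelled, has length $|f(e)| \geq \lambda'$, and consists entirely of edges that are good in $[f(\gamma)]$, while the images of distinct good edges of $\gamma$ are pairwise disjoint in $[f(\gamma)]$ by the no-cancellation statement. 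Summing yields the desired bound. The one delicate point — and the reason the argument works at all — is the tight arithmetic balance $\lambda' C = C_f + C$, which is exactly what makes the BCL cancellation on each side stop short of $f(\gamma')$ while still leaving a buffer of $C$ edges to certify goodness in $[f(\gamma)]$; the corner cases (few or no illegal turns in $\gamma$, or $\gamma'$ abutting an endpoint of a non-loop $\gamma$) only make the argument easier.
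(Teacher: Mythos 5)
Your proof is correct and follows essentially the same strategy as the paper's: identify legal buffer segments of length at least $C$ on each side of the good subpath, use the identity $\lambda' C = C_f + C$ together with the Bounded Cancellation Lemma to show that all cancellation at a given illegal turn is absorbed within the image of the adjacent buffer, and conclude that the image of the good subpath survives and keeps distance at least $C$ from every illegal turn of $[f(\gamma)]$. The only cosmetic difference is that the paper decomposes $\gamma$ globally into alternating maximal good and bad segments and examines $[f(B_i)]$, whereas you localize around a single good subpath and its surrounding maximal legal segment; the underlying arithmetic and use of BCL are identical.
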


\begin{proof}
If $\gamma$ is legal, then every edge is good, and the lemma follows directly from the definition of $\lambda'$ in Convention \ref{tt-map-convention}. Now assume that the path $\gamma$ has at least one illegal turn. Let 
\[
\gamma=\gamma_1B_1\gamma_2B_2\ldots\ \gamma_nB_n
\]
be a decomposition of $\gamma$ into 
maximal 
good edge paths $\gamma_i$ and maximal 
bad edge paths $B_i$.

\begin{figure}[h!]
\labellist
\small\hair 2pt
\pinlabel {$\gamma_1$} [ ] at 230 585
\pinlabel {$a_1$} [ ] at 155 580
\pinlabel {$a_2$} [ ] at 310 580
\pinlabel {$b_2$} [ ] at 375 605

\endlabellist
\begin{center}
\includegraphics[scale=0.7]{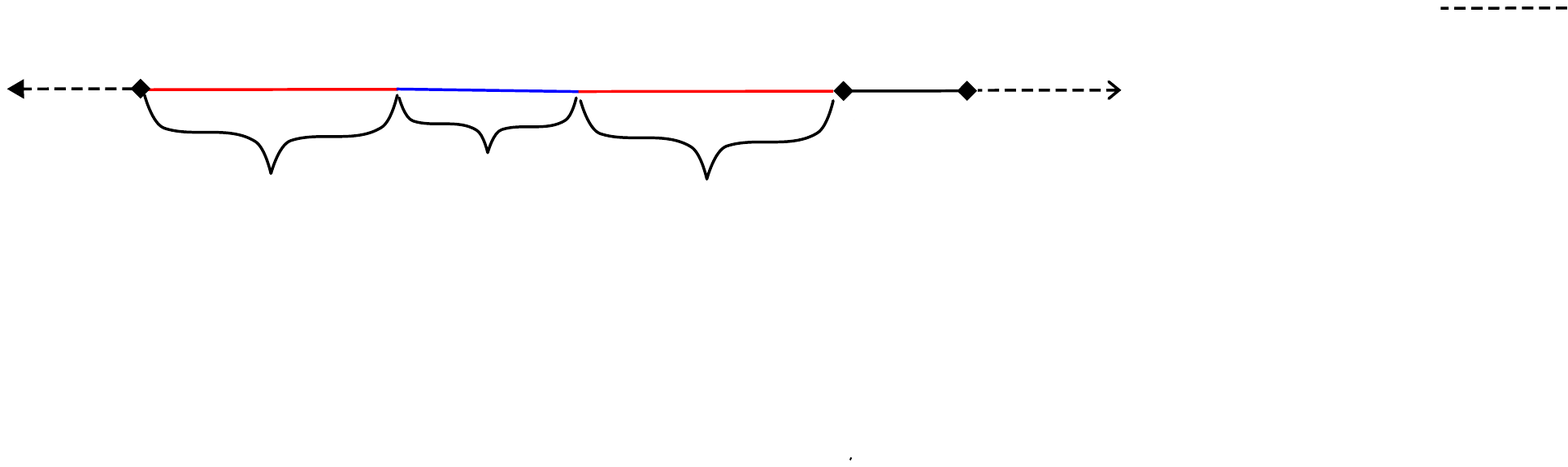}
\caption{$a_i$'s are legal ends of maximal bad segments, $\gamma$ is a good (legal) segment}
\label{goodedges}
\end{center}
\end{figure}

Note that each maximal bad segment $B_i$ can be written as an illegal concatenation $B_i=a_ib_ia_{i+1}$ 
where $a_i$ is a legal segment of length $C$ and $b_i$ is an edge path that (possibly) contains some illegal turns. 

Note that since $|f(a_i)|\ge\lambda'|a_i|\ge\lambda'C = C_f+C$, Lemma \ref{BCL} implies that $[f(B_i)]$ is an edge path of the from $[f(B_i)]=a_i'b_i'c_i'$ where $a_i',c_i'$ are legal edge paths such that $|a_i'|,|c_i'|\ge C$. Moreover, the turn at $f(\gamma_i)a_i'$ is legal. Since by Convention \ref{tt-map-convention} every edge grows at least by a factor of $\lambda'$, this implies the required result.  
\end{proof}

On the other hand, for any cyclically reduced loop $\gamma$ the number of bad edges is related to the number of illegal turns on $\gamma$, which we denote by $ILT(\gamma)$, via:
\begin{equation}
\label{illegal-bad}
ILT(\gamma)
\leq
\#\{{\rm bad \,\,  edges \,\, in \,\,} \gamma\}
\leq
2C \cdot ILT(\gamma)
\end{equation}
Since the number of illegal turns on $\gamma$ can only stay constant or decrease under iteration of the train track map, we obtain directly
\begin{equation}
\label{bad-edge-bound}
\#\{{\rm bad \,\,  edges \,\, in \,\,} [f^t(\gamma)]\}
\leq 2C \cdot ILT(\gamma)
\leq
2C \cdot \#\{{\rm bad \,\,  edges \,\, in \,\,} \gamma\}
\end{equation}
for all positive iterates $f^t$ of $f$. 

Notice however that the number of bad edges may actually grow (slightly) faster than 
the number of 
good edges under iteration of $f$, so that the goodness of $\gamma$ does not necessarily grow 
monotonically
under iteration of $f$. Nevertheless up to passing to powers one can overcome this issue:

\begin{prop}
\label{goodness-growth}
Let $f: \Gamma \to \Gamma$ be as in Convention \ref{tt-map-convention}. 

\smallskip
\noindent
(a)
There exists an integer $s \geq 1$ such that for every reduced loop $\gamma$ in $\Gamma$ one has:
\[
\g([f^s(\gamma)]) \geq \g(\gamma)
\]
In particular, for any integer $t \geq 0$ one has 
\[
\g([(f^s)^t(\gamma)]) \geq \g(\gamma)
\]

\smallskip
\noindent
(b)
If $0 < \g(\gamma) < 1$, then for any integer $s' > s$ we have
\[
\g([f^{s'}(\gamma)]) > \g(\gamma)
\]
and thus, for any $t \geq 0$:
\[
\g([(f^{s'})^t(\gamma)]) > \g(\gamma)
\]
\end{prop}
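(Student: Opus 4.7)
The plan is to exploit the asymmetry between good and bad edges under iteration: Lemma \ref{growth-of-good} gives exponential growth $G_t \geq (\lambda')^t G$ of the number of good edges, while estimate (\ref{bad-edge-bound}) gives a uniform bound $B_t \leq 2C B$ on the number of bad edges (where $G, B$ denote the counts of good and bad edges in $\gamma$, and $G_t, B_t$ the corresponding counts in $[f^t(\gamma)]$, with $C$ the critical constant). The essential feature here is that the bad-edge bound is uniform in $t$, not inductive, because the number of illegal turns is non-increasing under train-track iteration.

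First I would reduce the goodness inequality to an elementary ratio comparison: a cross-multiplication shows that $\g([f^t(\gamma)]) \geq \g(\gamma)$ is equivalent (when $G, B > 0$) to $G_t B \geq G B_t$. The boundary cases are immediate: $G = 0$ means $\g(\gamma) = 0$, and $B = 0$ means $\gamma$ is legal, so every iterate is legal and has goodness $1$. Then I would pick $s \geq 1$ with $(\lambda')^s \geq 2C$. For any $t \geq 1$ this gives $(\lambda')^{st} \geq (\lambda')^s \geq 2C$, hence
$$G_{st} B \;\geq\; (\lambda')^{st} G B \;\geq\; 2C \cdot GB \;\geq\; G B_{st},$$
proving part (a), including the ``in particular'' clause. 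The crucial point here — and what I expect to be the main subtlety — is that one must apply the bad-edge bound directly to the iterate $f^{st}$ rather than stepping through $(f^s)^t$, since an inductive step-by-step application of (\ref{bad-edge-bound}) would accumulate a multiplicative factor of $2C$ per stage and wreck the estimate. This is legitimate precisely because (\ref{bad-edge-bound}) holds for every iterate with the same constant, based on the original loop.

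For part (b), if $0 < \g(\gamma) < 1$ then both $G$ and $B$ are positive. For any $s' > s$ and any $t \geq 1$ we have, using $\lambda' > 1$,
$$(\lambda')^{s't} \;\geq\; (\lambda')^{s'} \;\geq\; \lambda' \cdot (\lambda')^s \;\geq\; 2\lambda' C \;>\; 2C,$$
so the chain of inequalities displayed above becomes strict, yielding $\g([(f^{s'})^t(\gamma)]) > \g(\gamma)$. Once the uniform (as opposed to inductive) use of the bad-edge bound is properly installed, the whole argument reduces to this one-line ratio comparison.
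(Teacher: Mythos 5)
Your proof is correct and takes essentially the same approach as the paper: reduce the goodness comparison to the ratio inequality $G_t B \geq G B_t$, choose $s$ so that $(\lambda')^s \geq 2C$, and then invoke the exponential lower bound on good edges (Lemma \ref{growth-of-good} iterated) against the uniform upper bound on bad edges from (\ref{bad-edge-bound}).

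The one place you diverge is cosmetic: for the ``in particular'' clause the paper simply iterates the single-step inequality $\g([f^s(\gamma)]) \geq \g(\gamma)$, since that inequality holds for \emph{every} reduced loop and therefore can be applied to $[f^{st}(\gamma)]$ at each stage. Your worry that ``an inductive step-by-step application of (\ref{bad-edge-bound}) would accumulate a multiplicative factor of $2C$ per stage'' is based on a misreading of how the iteration would go — one iterates the already-proved goodness monotonicity, not the raw bad-edge estimate, so no factor accumulates. That said, your direct route (applying the bounds once to $f^{st}$, using that (\ref{bad-edge-bound}) is uniform in the iterate) is equally valid and arguably cleaner, since it bypasses the need for the ``for every loop'' observation. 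Part (b) is handled the same way in both: $G, B > 0$ from $0 < \g(\gamma) < 1$, and $(\lambda')^{s'} > 2C$ makes the middle inequality strict.
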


\begin{proof}
(a)
We set $s \geq 1$ so that ${\lambda'}^s \geq 2C$ and obtain from Lemma \ref{growth-of-good} for the number $g'$ of good edges in $[f^s(\gamma)]$ and the number $g$ of good edges in $\gamma$ that:
$$g' \geq {\lambda'}^s g$$
For the number $b'$ of bad edges in $[f^s(\gamma)]$ and the number $b$ of bad edges in $\gamma$ we have from equation (\ref{bad-edge-bound}) that:
$$b' \leq 2C b$$
Thus we get 
\[
\frac{g'}{b'} 
\geq
\frac{\lambda'^s}{2C}\frac{g}{b}
\geq \frac{g}{b}\ \text{and hence}\ \frac{g'}{g'+b'}\ge\frac{g}{g+b}
\]
which proves $\g([f^s(\gamma)]) \geq \g(\gamma)$. The second inequality in the statement of the lemma follows directly from an iterative application of the first.

\smallskip
\noindent
(b)
The proof of part (b) follows from the above given proof of part (a), since ${\lambda'}^{s} > 2C$ unless there is no good edge at all in $\gamma$, which is excluded by our hypothesis $0 < \g(\gamma) $. Since  the hypothesis $\g(\gamma) < 1$ implies that there is at least one illegal turn in $\gamma$, in the above proof we get $b\ge1$, which suffices to show
$$\frac{g'}{b'} > \frac{g}{b}\, ,$$
and thus $\g([f^s(\gamma)]) > \g(\gamma)$.
\end{proof}
From the inequalities at the end of part (a) of the above proof one derives directly, for $g > 0$, the inequality 
\[
\g ([f^s(\gamma)]) \geq \dfrac{1}{1+\frac{2C}{\lambda'^s}(\frac{1}{\g(\gamma)}-1)}. 
\]
Hence we obtain:

\begin{cor}
\label{goodbig}
Let $f: \Gamma \to \Gamma$ be as in Convention \ref{tt-map-convention}, and
let $\delta>0$ and $\epsilon>0$ be given. Then there exist an integer $M'=M'(\delta,\epsilon)\ge 0$ such that for any loop $\gamma$ in $\Gamma$ with $\g(\gamma)\geq\delta$ we have $\g([f^{m}(\gamma)])\geq1-\epsilon$ for all $m\ge M'$. 
\qed
\end{cor}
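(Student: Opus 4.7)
The plan is to iterate the explicit inequality
$$\g([f^s(\gamma)]) \geq \dfrac{1}{1+\frac{2C}{\lambda'^s}\bigl(\frac{1}{\g(\gamma)}-1\bigr)}$$
stated just before the corollary, starting from any loop $\gamma$ with $\g(\gamma) \geq \delta$. This inequality already carries all of the analytic content; the task reduces to extracting from it a quantitative rate at which $\g$ approaches $1$.

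First I would introduce the substitution $u(x) := 1/x - 1$ for $x \in (0,1]$, so that the displayed inequality becomes the linear contraction
$$u(\g([f^s(\gamma)])) \leq r \cdot u(\g(\gamma)), \qquad r := \frac{2C}{\lambda'^s}.$$
Since $\lambda' > 1$, one can choose $s \geq 1$ large enough that $r < 1$ (the same choice of $s$ as in Proposition \ref{goodness-growth}(b) works). Iterating the contraction $k$ times yields $u(\g([f^{ks}(\gamma)])) \leq r^k \cdot u(\g(\gamma)) \leq r^k(1/\delta - 1)$. Picking $k = k(\delta,\epsilon)$ so that $r^k(1/\delta - 1) \leq \epsilon/(1-\epsilon)$ is equivalent to $\g([f^{ks}(\gamma)]) \geq 1 - \epsilon$, which handles the subsequence of iteration exponents that are multiples of $s$.

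To extend the conclusion to all $m \geq M'$, I would treat the $s$ residues modulo $s$ separately. For each $j \in \{0, 1, \ldots, s-1\}$ the loop $\gamma_j := [f^j(\gamma)]$ has goodness bounded below by an explicit function of $\delta$, obtained via the same type of one-step estimate coming from Lemma \ref{growth-of-good} and (\ref{bad-edge-bound}). Applying the block argument to each $\gamma_j$ gives bounds $k_j$, and setting $M' := \max_j (k_j s + j)$ delivers the desired uniform statement. Alternatively, once $\g([f^{ks}(\gamma)]) \geq 1 - \epsilon$ is reached along the $s$-subsequence, Proposition \ref{goodness-growth}(a) preserves this bound upon further iteration by powers of $s$, reducing the remaining task to a finite check over the $s-1$ intermediate exponents.

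The main (and essentially routine) obstacle is this bookkeeping step: the contracting inequality controls $\g$ only along the arithmetic progression $s, 2s, 3s, \ldots$, so a small auxiliary argument is needed to cover arbitrary $m \geq M'$. The analytic heart of the proof is the already-displayed inequality, which contracts $u(\g)$ by the factor $r < 1$ per block of $s$ iterations, thereby yielding exponentially fast convergence of $\g$ towards $1$.
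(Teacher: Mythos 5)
Your primary argument (fixing $s$ with $\tfrac{2C}{\lambda'^s}<1$, iterating the contraction $u\mapsto r\,u$ in the variable $u(x)=\tfrac1x-1$, and then splitting the exponent into residue classes modulo $s$) is correct, but it is more elaborate than what the paper intends by ``hence we obtain.''  The simplification you miss is that the displayed inequality
\[
\g([f^{s}(\gamma)]) \;\geq\; \frac{1}{1+\frac{2C}{\lambda'^{s}}\bigl(\frac{1}{\g(\gamma)}-1\bigr)}
\]
is actually valid with an \emph{arbitrary} exponent $m$ in place of $s$, not only for the particular $s$ chosen in Proposition~\ref{goodness-growth}(a).  Indeed, the two estimates that produce it — namely $\#\{\text{good edges in }[f^{m}(\gamma)]\}\geq \lambda'^{m}\cdot\#\{\text{good edges in }\gamma\}$ (iterating Lemma~\ref{growth-of-good}) and $\#\{\text{bad edges in }[f^{m}(\gamma)]\}\leq 2C\cdot\#\{\text{bad edges in }\gamma\}$ (equation~(\ref{bad-edge-bound})) — hold for every $m\geq 1$; the constraint $\lambda'^{s}\geq 2C$ in Proposition~\ref{goodness-growth}(a) is only needed to force the ratio $g'/b'$ to be non-decreasing, which is irrelevant here.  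Once you observe this, the corollary is immediate: for any loop with $\g(\gamma)\geq\delta$ one has $\g([f^{m}(\gamma)])\geq \bigl(1+\tfrac{2C}{\lambda'^{m}}(\tfrac1\delta-1)\bigr)^{-1}$, and the right-hand side tends to $1$ as $m\to\infty$, uniformly over $\delta$, so one just picks $M'$ so that this bound exceeds $1-\epsilon$ for $m\geq M'$.  No block iteration, no residue bookkeeping.

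One further caution: your ``alternative'' route — reaching $\g\geq 1-\epsilon$ along multiples of $s$, invoking Proposition~\ref{goodness-growth}(a) to preserve it, and then doing a ``finite check over the $s-1$ intermediate exponents'' — has a genuine gap as phrased.  Starting from $\g([f^{ks}(\gamma)])\geq 1-\epsilon$, the one-step lower bound for $\g([f^{ks+j}(\gamma)])$ is $\bigl(1+\tfrac{2C}{\lambda'^{j}}\cdot\tfrac{\epsilon}{1-\epsilon}\bigr)^{-1}$, which is below $1-\epsilon$ whenever $\lambda'^{j}<2C$; so the intermediate iterates need not stay above $1-\epsilon$.  Repairing this forces you to first drive the goodness above some $1-\epsilon'$ with $\epsilon'<\epsilon$, which is essentially your first (residue) argument again.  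Your first argument is the sound one; the alternative is not.
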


\medskip

\subsection{Illegal turns and iteration of the train track map}

${}^{}$
\smallskip

The following lemma (and also other statements of this subsection) are already known in differing train track dialects (compare for example Lemma 4.2.5 in \cite{BFH00}); for convenience of the reader we include here a short proof. Recall the definition of an INP and a pre-INP from 
Definition 
\ref{defn-INPs}. 

\begin{lem}
\label{finitely-many-INPs}
Let $f: \Gamma \to \Gamma$ be 
%an expanding train track map 
as in Convention \ref{tt-map-convention}. Then there are only finitely many INP's and pre-INP's in $\Gamma$ for $f$.  Furthermore, there is an efficient method to determine them.
\end{lem}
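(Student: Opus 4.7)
The plan is to show that INPs and pre-INPs all have combinatorial length bounded by an explicit constant depending only on $f$, $\lambda'$, and the cancellation constant $C(f)$, and that their endpoints lie in a finite explicit subset of $\Gamma$; only finitely many candidates then remain, which an algorithm can enumerate and check by direct iteration.

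For an INP $\eta = \alpha \bar\beta$ of period $t$ with legal branches $\alpha,\beta$ meeting at the tip, the tip is a periodic illegal turn; since $\Gamma$ has only finitely many turns, its period $s$ is bounded by the total number $T_{\max}$ of turns in $\Gamma$, and the minimal INP period $t$ can be chosen with $t \leq T_{\max}$. The branches are legal, so by Convention \ref{tt-map-convention} one has $|f^t(\alpha)| \geq (\lambda')^t |\alpha|$ and similarly for $\bar\beta$. The non-reduced concatenation $f^t(\alpha) \cdot f^t(\bar\beta)$ has only one ``cancellation spot'' at the image of the tip (again the tip, since $Tf^t$ fixes it), so by iterated bounded cancellation (Remark \ref{non-ht}, giving $C(f^t) \leq t \cdot C(f) \leq T_{\max}\cdot C(f)$) the INP relation $[f^t(\eta)] = \eta$ forces
\[
|\eta| \leq \frac{2C(f^t)}{(\lambda')^t - 1},
\]
which is uniformly bounded over $t \in \{1,\dots,T_{\max}\}$ by some explicit $L$. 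A pre-INP $\gamma$ with $[f^{t_0}(\gamma)] = \eta$ has exactly one illegal turn (legal paths map to legal paths, and reduction cannot create illegal turns); the same estimate then yields $|\gamma| \leq (L + 2C(f^{t_0}))/(\lambda')^{t_0}$, and since $|\gamma| \geq 2$ the exponential growth of $(\lambda')^{t_0}$ against the at-most-linear growth of $C(f^{t_0})$ bounds $t_0$ by some explicit $T_1$, giving a uniform length bound on pre-INPs.

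Endpoints of INPs and pre-INPs need not be vertices of $\Gamma$, but they must be fixed, respectively pre-fixed, points of some iterate $f^t$ with $t \leq T_{\max}$, respectively $t \leq T_1$. Since $f^t$ is PL with local stretch factor $\geq (\lambda')^t > 1$ on each piece of affinity, its fixed points in the interior of an edge are isolated and finite in number; summed over all admissible $t$ and all edges, this yields a finite and explicitly computable set of admissible endpoints. Combined with the length bound, only finitely many INPs and pre-INPs remain. Enumerating candidate reduced paths of bounded length with endpoints in this finite set, and testing each by computing $[f^t(\cdot)]$ for $t \leq T_{\max}$ (respectively tracking pre-images of the resulting INPs for $t_0 \leq T_1$), provides the efficient method.

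I expect the main technical obstacle to be the careful control of the iterated cancellation constant $C(f^t)$: since $f$ is not assumed to be a homotopy equivalence (Remark \ref{non-ht}), the bound $C(f^t) \leq t \cdot C(f)$ requires a direct inductive argument exploiting the fact that applying $f$ to a path with a single illegal turn introduces cancellation only at the image of that single turn, so the individual per-iteration cancellations compound additively rather than multiplicatively as $t$ increases.
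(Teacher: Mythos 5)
Your proposal has a genuine gap in the way it handles iterated bounded cancellation, and a second, related gap in bounding the period.

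You write $C(f^t) \leq t\cdot C(f)$, and you flag this as the ``main technical obstacle,'' sketching an inductive argument that the per-iteration cancellations ``compound additively.'' This is not correct. The backtracking subpath produced after one application of $f$ has length $\leq C(f)$, but when you apply $f$ again, that backtracking segment is first stretched by $f$ (by a factor as large as $\lambda''$) before the additional (at most $C(f)$) new cancellation occurs. The correct recursion is $|\rho_{t+1}| \leq \lambda'' |\rho_t| + C(f)$, giving the geometric bound $C(f^t) \leq C(f)\,\frac{(\lambda'')^t - 1}{\lambda'' - 1}$, not a linear one. Plugging this into your estimate $|\eta| \leq 2C(f^t)/((\lambda')^t - 1)$ yields a bound that blows up like $(\lambda''/\lambda')^t$ whenever $\lambda'' > \lambda'$, so without an a priori bound on the period $t$ you do not get a uniform length bound on INPs. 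For pre-INPs the same problem defeats your bound on $t_0$: the ``exponential vs.\ linear'' comparison you invoke fails once $C(f^{t_0})$ is itself exponential.

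This leads to the second gap: you claim the minimal INP period $t$ can be taken $\leq T_{\max}$, justified by the fact that the tip has $Tf$-period $s \leq T_{\max}$. But $Tf^s(\tau) = \tau$ only tells you that $[f^s(\eta)]$ has the \emph{same illegal turn at its tip}; it does not tell you $[f^s(\eta)] = \eta$. Ruling out distinct INPs sharing a tip is essentially the uniqueness statement one is trying to prove, so this step is circular as written. The paper avoids both pitfalls by a sharper structural observation: every point of a branch at distance $> C = C_f/(\lambda'-1)$ from the tip is ``good'' and survives \emph{all} iterations of $f$ uncancelled, because the backtracking at every stage is sourced from within the $f^t$-image of the first $C$ edges of each branch (since $\lambda' C = C_f + C$). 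Hence if a branch of an INP or a pre-INP had length $> C$, the surviving edge would force $|[f^{t}(\eta)]| \to \infty$, contradicting periodicity. This gives the branch bound $\leq C$ uniformly, with no reference to the period and no iterated cancellation constant, and simultaneously identifies the finite set $\mathcal V$ of pairs of legal $C$-paths in which every INP and pre-INP must be carried, each pair determining at most one such path. That is where the paper's finiteness and computability come from.
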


\begin{proof} We consider the set $\cal V$ of all pairs $(\gamma_1, \gamma_2)$ of legal edge paths $\gamma_1, \gamma_2$ with common initial vertex 
but distinct first edges, which have 
combinatorial length $|\gamma_1| = |\gamma_2| = C$. Note that $\cal V$ is finite.

From the definition of the cancellation bound and the inequalities (\ref{1.1})  it follows directly that every INP or every pre-INP $\eta$ must be a subpath of some path $\bar \gamma_1 \circ \gamma_2$ with $(\gamma_1, \gamma_2) \in \cal V$. Furthermore, we define for $i = 1$ and $i =2$ the initial subpath $\gamma^*_i$ of $\gamma_i$ to consist of all points $x$ of $\gamma_i$ that are mapped by some positive iterate $f^t$ into the backtracking subpath at the tip of the unreduced path $f^t(\bar \gamma_1 \circ \gamma_2)$. We observe that the interior of any INP-subpath or pre-INP-subpath $\eta$ of $\bar \gamma_1 \circ \gamma_2$ must agree with the subpath $\bar \gamma^*_1 \circ \gamma^*_2$.  Thus any pair $(\gamma_1, \gamma_2) \in \cal V$ can define at most one 
INP-subpath of $\bar \gamma_1 \circ \gamma_2$. Since $\cal V$ is finite and easily computable, we obtain directly the claim of the lemma.
\end{proof}

In light of Lemma \ref{finitely-many-INPs}, we obtain a \emph{subdivision} $\Gamma'$ of $\Gamma$ by adding the endpoints of 
all the 
finitely many INP's or pre-INP's, 
while keeping the property that $f$ maps vertices to vertices.
This gives  
a train track map $f':\Gamma' \to\Gamma'$ which represents the same outer automorphism as $f:\Gamma\to\Gamma$, thus justifying that from now on we concentrate on train track maps which satisfy the following:

\begin{conv}
\label{endpoints-of-INPs} 
Let $f: \Gamma \to \Gamma$ be a train track map as in
Convention \ref{tt-map-convention}. We assume in addition that 
every endpoint of an INP or a pre-INP is a vertex. 
\end{conv}

\begin{lem}
\label{two-INPs}
Let $f:\Gamma\to\Gamma$ be 
%an expanding train track map 
as in Convention 
\ref{endpoints-of-INPs}. 
Then, there exists an exponent $M_1 \geq 1$ with the following property:
Let $\gamma$ be a path in $\Gamma$, and assume that it contains precisely two illegal turns, 
which are the tips of 
INP-subpaths or pre-INP-subpaths $\eta_1$ and $\eta_2$ of $\gamma$.  If $\eta_1$ and $\eta_2$ overlap in a non-trivial subpath $\gamma'$, then $f^{M_1}(\gamma)$ reduces to a path $[f^{M_1}(\gamma)]$ which 
has at most one illegal turn.
\end{lem}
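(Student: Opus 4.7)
The strategy is to use the exponential growth of $|f^{M_1}(\gamma')|$ together with the uniform boundedness of INP branches (via Lemma \ref{finitely-many-INPs}) to force the two tip-cancellations in $f^{M_1}(\gamma)$ to collide; the resulting analysis will in fact show that the reduced path has \emph{zero} illegal turns, in particular at most one.

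By Lemma \ref{finitely-many-INPs} there are only finitely many INPs and pre-INPs in $\Gamma$, so I would fix $s_0\geq 1$ with $f^{s_0}$ sending every pre-INP to an INP and fixing each INP, and let $B$ be the maximum over all branch lengths of INPs in $\Gamma$. Choose $M_1\geq s_0$ with $(\lambda')^{M_1-s_0}>2B$; then for any $\gamma'$ with $|\gamma'|\geq 1$ one has $|f^{M_1-s_0}(\gamma')|>2B\geq |R_1|+|L_2|$. After replacing $\gamma$ by $f^{s_0}(\gamma)$ I may assume $\eta_1,\eta_2$ are honest INPs and write $\gamma=\cdots L_1\cdot R_1^{out}\cdot\gamma'\cdot L_2^{out}\cdot R_2\cdots$, with $R_1=R_1^{out}\gamma'$ and $L_2=\gamma' L_2^{out}$.

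Set $M=M_1-s_0$. In the unreduced $f^M(\gamma)$ the two tip cancellations have sizes $c_1=|f^M(R_1)|-|R_1|$ and $c_2=|f^M(L_2)|-|L_2|$, while the middle legal segment between the two tips has length $D=|f^M(R_1^{out}\gamma' L_2^{out})|$. Direct substitution gives $c_1+c_2-D=|f^M(\gamma')|-|R_1|-|L_2|>0$, so the two cancellation regions overlap. Carrying out the free reduction produces a surviving ``core'' of exactly $k=c_1+c_2-D$ edges which, by the $\eta_1$-INP structure, is the first $k$ edges of the cancelled tail of $f^M(L_1)$ sitting just past the surviving $L_1$-prefix, and, by the $\eta_2$-INP structure, simultaneously the last $k$ edges of the cancelled head of $f^M(R_2)$ sitting just before the surviving $R_2$-suffix.

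The main point to verify, and the chief obstacle, is that the free reduction really halts at this core and does not continue into $L_1$ or $R_2$. Granted this, both boundary turns are automatically legal: the left-boundary turn, between the last edge of $L_1$ and the first edge of the core, coincides with the internal turn of the legal path $f^M(L_1)$ at the seam between its $L_1$-prefix and its cancelled $c_1$-suffix, and symmetrically the right-boundary turn is an internal turn of $f^M(R_2)$; all internal turns of the core itself are likewise inherited from $f^M(L_1)$ or $f^M(R_2)$. Since $L_1$ and $R_2$ are legal (as branches of INPs) and $f$ preserves legality, every turn in $[f^{M_1}(\gamma)]$ is legal and hence $[f^{M_1}(\gamma)]$ has no illegal turn at all. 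The halting claim itself follows from the same legality: the two candidate boundary turns are non-degenerate, so no further free cancellation is possible.
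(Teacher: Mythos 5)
Your approach is genuinely different from, and in some respects sharper than, the paper's. The paper picks a point $x$ in the interior of the overlap $\gamma'$, uses the dynamical structure of an INP to produce points $x'$, $x''$ on the two outer branches with $f^t(x')=f^t(x)=f^t(x'')$ for some $t$, splits $\gamma=\gamma_1\circ\gamma_2\circ\gamma_3$ at $x'$, $x''$, and observes that the middle piece collapses, giving $[f^t(\gamma)]=[f^t(\gamma_1)\circ f^t(\gamma_3)]$, a concatenation of two legal paths, hence at most one illegal turn. You instead track edge counts: the two tip-cancellation amounts $c_1,c_2$ and the length $D$ of the middle legal segment, and use the identity $c_1+c_2-D=|f^{M}(\gamma')|-|R_1|-|L_2|$ together with the uniform bound $B$ on INP branch lengths to force the two cancellation regions to collide. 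Tracing the free reduction through, you identify the surviving core as a prefix of the cancelled tail of $f^M(L_1)$ and simultaneously a suffix of the cancelled head of $f^M(R_2)$, so that both boundary turns and all internal turns of the resulting path are inherited from legal paths. You thus obtain zero illegal turns, slightly more than the paper's ``at most one'' (which is all the lemma needs). The core identification and the halting argument via legality of the boundary turns are correct.

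There is, however, a gap in the reduction to honest INPs. You replace $\gamma$ by $[f^{s_0}(\gamma)]$ and assert you may then assume $\eta_1,\eta_2$ are INP-subpaths overlapping in $\gamma'$. But the INP $\tilde\eta_1:=[f^{s_0}(\eta_1)]$ need not be a full subpath of $[f^{s_0}(\gamma)]$: the cancellation at the second tip eats into the middle from the right, and if $|f^{s_0}(\gamma')|>|\tilde L_2|$ (with $\tilde L_2$ the relevant branch of $\tilde\eta_2$) then $\tilde R_1$ would extend past the second tip, so $\tilde\eta_1\not\subset[f^{s_0}(\gamma)]$. This can occur while $[f^{s_0}(\gamma)]$ still has two illegal turns, namely whenever $|\tilde L_2|<|f^{s_0}(\gamma')|<|\tilde R_1|+|\tilde L_2|$, so the structural form you rely on is not guaranteed after the replacement. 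The fix is simply to drop the pre-replacement and run your computation directly on $\gamma$: for $M_1$ at least the largest ``lifetime'' of a pre-INP, $\tilde\eta_i:=[f^{M_1}(\eta_i)]$ is an INP with branches $\tilde L_i,\tilde R_i$ of length $\le B$, so $f^{M_1}(L_1)=\tilde L_1 w_1$, $f^{M_1}(R_1)=\bar w_1\tilde R_1$, and likewise for $\eta_2$; then $c_1=|f^{M_1}(R_1)|-|\tilde R_1|$, $c_2=|f^{M_1}(L_2)|-|\tilde L_2|$, and $c_1+c_2-D=|f^{M_1}(\gamma')|-|\tilde R_1|-|\tilde L_2|\ge(\lambda')^{M_1}-2B>0$. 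The remainder of your argument then goes through verbatim with $\tilde L_1,\tilde R_2$ in place of $L_1,R_2$, and a single uniform $M_1$ works since by Lemma \ref{finitely-many-INPs} there are only finitely many INPs and pre-INPs.
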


\begin{proof}
For any point $x_1$ in the interior of one of the two legal branches of an INP or a pre-INP $\eta$ there exists a point $x_2$ on the other legal branch 
such that for a suitable positive power of $f$ one has $f^t(x_1) = f^t(x_2)$. Hence, if we pick a point $x$ in the interior of $\gamma'$, there are points $x'$ on the other legal branch of $\eta_1$ and $x''$ on the other legal branch of $\eta_2$ such that for some positive iterate of $f$ one has $f^t(x') = f^t(x) = f^t(x'')$. It follows that the decomposition $\gamma = \gamma_1 \circ \gamma_2 \circ \gamma_3$, which uses $x'$ and $x''$ as concatenation points, defines legal subpaths $\gamma_1$ and $\gamma_3$ which yield $[f^t(\gamma)] = [f^t(\gamma_1) \circ f^t(\gamma_3)]$, which 
has at most one illegal turn.

Since by Convention \ref{endpoints-of-INPs} the overlap $\gamma'$ is an edge path, it follows from the finiteness result proved in Lemma \ref{finitely-many-INPs} that there are only finitely many constellations for $\eta_1$ and $\eta_2$. This shows that there must be a bound $M_1$ as claimed.
\end{proof}

\begin{lem}
\label{single-illegal-turn}
For every 
%expanding 
train track map $f: \Gamma \to \Gamma$ as in Convention 
\ref{endpoints-of-INPs}
there exists a constant $M_2= M_2(\Gamma) \geq 0$ such that every path $\gamma$ with precisely 1 illegal turn satisfies the following:

{Either $\gamma$ contains an INP or pre-INP as a subpath, or else $[f^{M_2}(\gamma)]$ is legal.}
\end{lem}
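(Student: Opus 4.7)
Write $\gamma = \overline{\alpha} \circ \beta$, where $\alpha$ and $\beta$ are legal edge paths emanating from the illegal turn vertex $v$ with distinct first edges $e_1, e_2$; these branches are legal since $\gamma$ has only one illegal turn. The plan is to reduce to the pair-of-branches analysis from the proof of Lemma~\ref{finitely-many-INPs}: extend $\alpha$ and $\beta$ to legal paths $\tilde{\alpha}, \tilde{\beta}$ of length exactly $C$, taking the initial $C$-edge segments when the original branch is already long enough, and extending past the endpoint along any legal continuation otherwise. (Legal extensions of length $C$ are available at each vertex under mild genericity conditions on $\Gamma$; in the boundary case where none exist, the argument adapts using the maximal available legal extensions.)

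The pair $(\tilde{\alpha}, \tilde{\beta})$ then lies in the finite set $\mathcal{V}$ from the proof of Lemma~\ref{finitely-many-INPs}, so the associated path $\overline{\tilde{\alpha}^*} \circ \tilde{\beta}^*$ is a non-degenerate pre-INP or INP; non-degeneracy holds because the illegality of $(e_1, e_2)$ forces each of $\tilde{\alpha}^*, \tilde{\beta}^*$ to contain the initial edge of its respective branch. If $\tilde{\alpha}^* \subset \alpha$ and $\tilde{\beta}^* \subset \beta$ as initial subpaths from $v$, then $\overline{\tilde{\alpha}^*} \circ \tilde{\beta}^*$ is a pre-INP subpath of $\gamma$ and the first alternative of the lemma holds. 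Otherwise, without loss of generality $\tilde{\alpha}^* \not\subset \alpha$, so $\alpha$ is a proper initial subpath of $\tilde{\alpha}^*$, and the claim is that $[f^{M_2}(\gamma)]$ is legal for an appropriate choice of $M_2 = M_2(\Gamma)$.

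I would take $M_2$ to be the maximum, over the finitely many pre-INPs $\eta'$ of $\Gamma$, of the smallest exponent for which $[f^t(\eta')]$ is an INP; this is finite by Lemma~\ref{finitely-many-INPs}. For $t \geq M_2$, the defining property of pre-INPs ensures that $f^t(\tilde{\alpha}^*)$ lies entirely within the common prefix $\tilde{p}_t$ of $f^t(\tilde{\alpha})$ and $f^t(\tilde{\beta})$. Since $\alpha \subset \tilde{\alpha}^*$, this gives $f^t(\alpha) \subset \tilde{p}_t \subset f^t(\tilde{\beta})$; combined with the inclusion relating $f^t(\tilde{\beta})$ and $f^t(\beta)$, this forces one of $f^t(\alpha), f^t(\beta)$ to be an initial subpath of the other, so the reduction of $f^t(\gamma) = \overline{f^t(\alpha)} \circ f^t(\beta)$ cancels the shorter side entirely and leaves a legal edge path. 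The main obstacle I anticipate is the careful bookkeeping in this last step, especially when both $\alpha$ and $\beta$ had to be extended, so the inclusions between $\tilde{p}_t$, $f^t(\tilde{\beta})$, and $f^t(\beta)$ require a small case split; a secondary technicality is handling the degenerate case in which legal extensions of length $C$ are not available at some vertex, which should be handled by a direct argument using the shorter maximal legal extension together with the expansion hypothesis of Convention~\ref{tt-map-convention}.
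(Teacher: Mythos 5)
Your proposal breaks down at the very first substantive step. You assert that because the extended pair $(\tilde\alpha, \tilde\beta)$ lies in the finite set $\mathcal V$ from the proof of Lemma~\ref{finitely-many-INPs}, ``the associated path $\overline{\tilde\alpha^*}\circ\tilde\beta^*$ is a non-degenerate pre-INP or INP.'' That is false. The paths $\tilde\alpha^*$ and $\tilde\beta^*$ (the eventually cancelled portions) are well defined for \emph{every} pair in $\mathcal V$, but the conclusion drawn in Lemma~\ref{finitely-many-INPs} is one-sided: \emph{if} an INP or pre-INP subpath of $\overline{\tilde\alpha}\circ\tilde\beta$ exists, \emph{then} its interior must coincide with $\overline{\tilde\alpha^*}\circ\tilde\beta^*$. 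Nothing forces the converse. For a typical illegal turn the cancellation in $[f^t(\overline{\tilde\alpha}\circ\tilde\beta)]$ stabilizes after finitely many steps, the residual turn becomes legal, and $\overline{\tilde\alpha^*}\circ\tilde\beta^*$ maps under a suitable iterate to a trivial path --- hence it is not a (pre-)INP at all. The ``non-degeneracy'' argument you give only shows that $\tilde\alpha^*$ and $\tilde\beta^*$ are non-trivial (the first edges are eventually identified since the turn is illegal); it does not show periodicity, which is the essence of being an INP. Since the entire case split rests on this claim, it collapses: in your case where $\tilde\alpha^*\subset\alpha$ and $\tilde\beta^*\subset\beta$ you conclude that $\gamma$ contains a (pre-)INP, but if $\overline{\tilde\alpha^*}\circ\tilde\beta^*$ is merely the eventually-cancelled path (and the cancellation eventually resolves), the correct conclusion is instead that $[f^{M_2}(\gamma)]$ is legal, i.e.\ the \emph{other} alternative of the lemma.

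There is a second, independent problem with uniformity. In the case where $\alpha$ is a proper initial subpath of $\tilde\alpha^*$, you want $f^t(\alpha)$ to be swallowed by the backtracking path $\tilde p_t$ for all $t\ge M_2$. This does happen for $t$ large, but the required $t$ depends on how close the endpoint of $\alpha$ is to the endpoint of $\tilde\alpha^*$, a quantity that varies with $\gamma$. Your proposed $M_2$ (the maximum time over all pre-INPs to map to an INP) does not control this. (Also, as stated, ``$f^t(\tilde\alpha^*)$ lies entirely within $\tilde p_t$'' is not quite right for any finite $t$: at each finite time only a proper initial subpath of $\tilde\alpha^*$ has been cancelled, so the statement must be about $\alpha$, and even then the time is not controlled by your constant.)

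The paper's proof avoids both pitfalls by working directly with the finite set $\mathcal V_+$ of pairs of legal paths of length \emph{at most} $C$, allowing trivial paths --- so there is no need to extend short branches (and no genericity caveats). It then defines a self-map $f_\#$ on $\mathcal V_+$ that truncates images to length $\le C$ after cancellation, observes that finiteness of $\mathcal V_+$ makes every orbit eventually periodic, and shows that at a periodic point exactly one of three terminal alternatives must hold (one side trivial, turn legal, or an INP appears in $\overline{\gamma_3}\circ\gamma_4$). The uniform bound $M_2$ comes from the finiteness of $\mathcal V_+$ itself, not from a property of individual pre-INPs. Your instinct to reduce to a finite set is right, but you need the dichotomy to come out of the dynamics of $f_\#$ on that finite set rather than from an unconditional claim that $\overline{\tilde\alpha^*}\circ\tilde\beta^*$ is a pre-INP.
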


\begin{proof} 
Similar to the set $\cal V$ in the proof of Lemma \ref{finitely-many-INPs} we define the set $\cal V_+$ be the set of all pairs $(\gamma_1, \gamma_2)$ of legal edge paths $\gamma_1, \gamma_2$ in $\Gamma$ which have
combinatorial length $0 \leq |\gamma_1| \leq C$ and $0 \leq |\gamma_2| \leq C$, and which satisfy:

The paths $\gamma_1$ and $\gamma_2$ have common initial point, and, unless one of them (or both) are trivial, they have distinct first edges. Note that $\cal V_+$ is finite and contains $\cal V$ as subset.

\noindent Following ideas from \cite{CL15} we define a map 
$$f_\#: \cal V_+ \to \cal V_+$$
induced by $f$ through declaring the {\em $f_\#$-image} of a pair $(\gamma_1, \gamma_2) \in \cal V_+$ to be the pair $f_\#(\gamma_1, \gamma_2) := (\gamma''_1, \gamma''_2) \in \cal V_+$ which is defined by setting for $i \in \{1, 2 \}$
$$f(\gamma_i) =: \gamma'_i \circ \gamma''_i \circ \gamma'''_i\, ,$$
where $\gamma'_i$ is the maximal common initial subpath of $f(\gamma_1)$ and  $f(\gamma_2)$, where $|\gamma''_i| \leq  C$, and where $\gamma'''_i$ is non-trivial only if $|\gamma''_i| =  C$. 

Then for any $(\gamma_1, \gamma_2) \in \cal V_+$ we see as in the proof of Lemma \ref{finitely-many-INPs} that 
there exists an exponent $t \geq 0$ such that the iterate $f^t_\#(\gamma_1, \gamma_2) =: (\gamma_3, \gamma_4) \in \cal V_+$ satisfies one of the following:
\begin{enumerate}
\item
One of $\gamma_3$ or $\gamma_4$ (or both) are trivial.
\item
The turn defined by the two initial edges of $\gamma_3$ and $\gamma_4$ is legal.
\item
The path $\bar \gamma_3 \circ \gamma_4$ contains an INP as subpath.
\end{enumerate}
From the finiteness of $\cal V_+$ it follows directly that there is an upper bound $M_2 \geq 0$ such that for $t \geq M_2$ one of the above three alternatives must be true for $f^t_\#(\gamma_1, \gamma_2) =: (\gamma_3, \gamma_4)$.

Consider now the given path $\gamma$, and write it as illegal concatenation of two legal paths $\gamma = \gamma'_1 \circ \gamma'_2$. Then the maximal initial subpaths $\gamma_1$ of $\bar\gamma'_1$ and $\gamma_2$ of $\gamma'_2$ of $\gamma_2$ of length $|\gamma_i| \leq C$ form a pair $(\gamma_1, \gamma_2)$ in $\cal V_+$. In the above cases (1) or (2) it follows directly that $f^t(\gamma)$ is a (possibly trivial) legal path. In alternative (3) the path $f^t(\gamma)$ contains an INP.
\end{proof}

\begin{prop}
\label{ILT-decrease}
For any 
%expanding 
train track map as in Convention 
\ref{endpoints-of-INPs}
there exists an exponent 
$r = r(f) \geq 0$ such that every finite path $\gamma$ in $\Gamma$ 
with $\ILT(\gamma) \geq 1$ 
satisfies
$$\ILT([f^r(\gamma)]) < \ILT(\gamma) \, ,$$
unless every illegal turn on $\gamma$ is the tip of an INP or pre-INP subpath $\eta_i$ of $\gamma$, where any two $\eta_i$ are either disjoint subpaths on $\gamma$, or they overlap precisely in a common endpoint. 
\end{prop}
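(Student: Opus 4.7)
\medskip

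\noindent\textbf{Proof proposal for Proposition \ref{ILT-decrease}.}

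The plan is to set $r := \max(M_1, M_2) + r_0$, where $M_1$ and $M_2$ come from Lemmas \ref{two-INPs} and \ref{single-illegal-turn}, and $r_0$ is an auxiliary bound (to be extracted below) ensuring every pre-INP subpath of $\gamma$ has actually become an INP subpath in $[f^{r_0}(\gamma)]$. First I would observe the general monotonicity statement that $\ILT([f^t(\gamma)]) \le \ILT(\gamma)$ for every $t \ge 0$: since $f$ is a train track map, every maximal legal subpath of $\gamma$ maps to a legal path, and cancellation in the reduced image $[f(\gamma)]$ can only occur at the illegal turns of $\gamma$, contributing at most one surviving illegal turn each (via bounded cancellation, Lemma \ref{BCL}). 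This reduces the proposition to exhibiting, under negation of the exceptional hypothesis, at least one illegal turn which strictly disappears after $r$ iterations.

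Next I would split the analysis into two cases, based on the failure of the exceptional structure. \emph{Case A:} some illegal turn $\tau$ of $\gamma$ is \emph{not} the tip of any INP- or pre-INP-subpath of $\gamma$. I isolate a subpath $\gamma_\tau \subset \gamma$ which contains only $\tau$ as illegal turn and whose two legal branches have length $C$ (using Convention \ref{tt-map-convention}); by Lemma \ref{single-illegal-turn}, either $\gamma_\tau$ contains an INP or pre-INP (contradicting the Case A hypothesis, after possibly enlarging the branch-length to the maximum possible INP length), or $[f^{M_2}(\gamma_\tau)]$ is legal. Because $\gamma_\tau$ has legal branches of length $\ge C$, bounded cancellation prevents the image of $\gamma_\tau$ from being consumed by cancellations coming from the neighboring illegal turns of $\gamma$; this localization shows that the contribution of $\tau$ to $\ILT([f^{M_2}(\gamma)])$ vanishes.

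\emph{Case B:} every illegal turn of $\gamma$ is the tip of an INP- or pre-INP-subpath $\eta_i$, but some two of these subpaths, say $\eta_1$ and $\eta_2$, overlap in a non-trivial subpath (i.e.\ not just in an endpoint). After $r_0$ iterations every pre-INP has become an INP, so we may assume both $\eta_1,\eta_2$ are INPs in $\gamma' := [f^{r_0}(\gamma)]$, still overlapping non-trivially. I now apply Lemma \ref{two-INPs} to the subpath of $\gamma'$ carrying $\eta_1 \cup \eta_2$, whose $M_1$-th iterate has at most one illegal turn; by the same localization argument using bounded cancellation the merged contribution strictly decreases the total illegal turn count.

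The step I expect to be the main obstacle is the localization: verifying that applying Lemmas \ref{single-illegal-turn} or \ref{two-INPs} to a short subpath around the chosen illegal turn(s) actually accounts for the corresponding illegal turns of $[f^r(\gamma)]$, without phantom illegal turns appearing from cancellations with the images of neighboring illegal turns of $\gamma$. The key tool here is Lemma \ref{BCL} (bounded cancellation) applied to the legal branches of the nearby INP/pre-INP subpaths, combined with the fact from (the proof of) Lemma \ref{growth-of-good} that after one iteration a legal segment of length $C$ already has legal image longer than $C_f + C$, so that the overall reduced path $[f^r(\gamma)]$ decomposes, near each surviving illegal turn, as prescribed by the local lemmas. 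Once this is in place, combining Cases A and B yields the strict inequality $\ILT([f^r(\gamma)]) < \ILT(\gamma)$ whenever the exceptional structural hypothesis fails. \qed
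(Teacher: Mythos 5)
Your proof follows essentially the same route as the paper's one-sentence argument, which says precisely to consider maximal subpaths with one illegal turn and then invokes Lemma \ref{single-illegal-turn} and Lemma \ref{two-INPs}; your Cases A and B are exactly those two invocations, and the localization step you flag (bounded cancellation plus the critical constant $C$ ensuring legal branches of length $\geq C$ insulate neighboring illegal turns from each other's backtracking) is the content the paper leaves implicit. One minor remark: the preprocessing exponent $r_0$ in Case B is superfluous, since Lemma \ref{two-INPs} is already stated for pre-INP subpaths as well as INPs, so it can be applied to $\gamma$ directly.
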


\begin{proof} 
Through considering maximal subpaths with precisely one illegal turn we obtain the claim as
direct consequence of Lemma \ref{single-illegal-turn} and Lemma \ref{two-INPs}.
\end{proof}

\begin{defn} 
\label{pseudo-leg}
A path $\gamma$ in $\Gamma$ is called \emph{pseudo-legal} if it is a legal concatenation of legal paths and INP's.
\end{defn}

From the same arguments 
as in the last proof 
we also deduce the following:

\begin{prop}
\label{pseudo-legal-iterate}
Let $f: \Gamma \to \Gamma$ be as in Convention 
\ref{endpoints-of-INPs}. Then
for 
any finite edge path 
$\gamma$ in $\Gamma$ there is a positive iterate $f^t(\gamma)$ which reduces to a path $\gamma' := [f^t(\gamma)]$ 
that 
is  pseudo-legal. The analogous statement also holds for loops 
instead of paths.
The exponent $t$ needed in either case depends only on the number of illegal turns in $\gamma$.
\end{prop}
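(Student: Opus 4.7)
The plan is to prove Proposition \ref{pseudo-legal-iterate} by induction on $\ILT(\gamma)$, the number of illegal turns in $\gamma$. The base case $\ILT(\gamma)=0$ is immediate: a legal path is already pseudo-legal, as an empty concatenation of legal pieces with no INP inserted, so $t=1$ suffices (since $f$ is a train track map, $[f(\gamma)]=f(\gamma)$ is legal).

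For the inductive step I would apply Proposition \ref{ILT-decrease}, which supplies an exponent $r=r(f)$ such that one of two alternatives holds. If $\ILT([f^r(\gamma)])<\ILT(\gamma)$, then the inductive hypothesis applied to $[f^r(\gamma)]$ yields an exponent $t'$, depending only on $\ILT([f^r(\gamma)])$, such that $[f^{t'+r}(\gamma)]=[f^{t'}([f^r(\gamma)])]$ is pseudo-legal; by induction the total exponent $t'+r$ depends only on $\ILT(\gamma)$. Otherwise we are in the second alternative, where $\gamma$ admits a decomposition
\[
\gamma \;=\; \lambda_0\circ\eta_1\circ\lambda_1\circ\eta_2\circ\cdots\circ\eta_k\circ\lambda_k
\]
into legal subpaths $\lambda_j$ (possibly trivial) and INP or pre-INP subpaths $\eta_j$, with every junction between adjacent pieces being a legal turn.

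In this second case I would invoke Lemma \ref{finitely-many-INPs} to extract a uniform exponent $M_3\ge 1$ such that $[f^{M_3}(\eta)]$ is an INP for every pre-INP and every INP $\eta$ in $\Gamma$; this uses that the finitely many pre-INPs each become INPs after some bounded iterate, and that an INP remains an INP under further iteration of $[f(\cdot)]$. Applying $f^{M_3}$ piecewise, each $[f^{M_3}(\lambda_j)]$ is legal because $f$ is a train track map, each $[f^{M_3}(\eta_j)]$ is an INP, and the legal turns at the junctions survive because train track maps send legal turns to legal turns. Hence $[f^{M_3}(\gamma)]$ is a legal concatenation of legal paths and INPs, i.e.\ pseudo-legal. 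The total exponent is bounded by $r\cdot\ILT(\gamma)+M_3$, which depends only on $\ILT(\gamma)$ as required. The loop case follows from the same argument applied cyclically, since all of the reasoning above is local in $\gamma$.

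The main obstacle I anticipate is the clean verification that the reduction in $f^{M_3}(\gamma)$ does not propagate across the legal junctions between the $\lambda_j$ and the $\eta_j$. This requires combining Convention \ref{endpoints-of-INPs} (so the $\eta_j$ have vertex endpoints and their legal branches have well-defined combinatorial length), the cancellation bound $C_f$ of Remark \ref{non-ht}, and the length growth \eqref{1.1} to argue that the tip-cancellation of each $[f^{M_3}(\eta_j)]$ remains strictly interior to that piece. Once this is in place, the junction turns remain legal in the reduced word and the pseudo-legal structure is preserved.
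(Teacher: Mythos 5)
Your proof is correct and takes essentially the same route as the paper: iterate until the number of illegal turns stabilizes, then show the stable path is a legal concatenation of legal paths and INPs; the paper invokes Lemmas \ref{single-illegal-turn} and \ref{two-INPs} directly after observing stabilization, whereas you reach the same decomposition via Proposition \ref{ILT-decrease} together with an explicit pre-INP-to-INP exponent, which is a harmless repackaging. The obstacle you flag at the end in fact resolves more directly than you suggest: once $[f^{M_3}(\eta_j)]$ is an INP it retains an illegal turn, so the tip-cancellation inside $f^{M_3}(\eta_j)$ cannot have consumed an entire legal branch, and since reduction relative to endpoints preserves the vertex endpoints of each piece while all the junction turns are legal and stay legal under $f^{M_3}$, the cancellation is forced to stay strictly interior to each $f^{M_3}(\eta_j)$ and there is nothing to propagate.
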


\begin{proof}
Since the number of illegal turns in 
$[f^t(\gamma)]$ 
non-strictly decreases for increasing $t$, we can assume that for sufficiently large $t$ it stays constant. It follows from Lemma \ref{single-illegal-turn} (after possibly passing to a further power of $f$) that every illegal turn of 
$[f^t(\gamma)]$ 
is the tip of some INP-subpath $\eta_i$ of 
$[f^t(\gamma)]$. 
From Lemma \ref{two-INPs} we obtain furthermore that any two such $\eta_i$ and $\eta_j$ that are adjacent on 
$[f^t(\gamma)]$ 
can not overlap non-trivially.  
\end{proof}

In the next subsection we also need the following lemma, 
where 
{\em illegal (cyclic) concatenation}
means that the path (loop) is a concatenation of subpaths where all concatenation points are illegal turns.

\begin{lem} \label{pull-back-concatenation}
Let $f:\Gamma\to\Gamma$ be an expanding train track map. Let $\gamma$ be a reduced loop in $\Gamma$ and let $\gamma' = [f(\gamma)]$ be its reduced image loop. Assume that for some $t \geq 1$ a decomposition $\gamma' = \gamma'_1 \circ \gamma'_2 \circ  \ldots \circ \gamma'_t$ as illegal cyclic concatenation is given.  Then there exists a decomposition as illegal cyclic concatenation $\gamma = \gamma_1 \circ \gamma_2 \circ \ldots \circ \gamma_t$ with the property that the reduced image paths $[f(\gamma_i)]$ contain the paths $\gamma'_i$ as subpaths, for $i = 1, 2, \ldots, t$.
\end{lem}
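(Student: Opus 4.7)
The plan is to combine the confluence of free reduction with the train track property, namely that $f$ maps legal subpaths of $\gamma$ to already-reduced image paths, so that all backtracking cancellation inside $f(\gamma)$ occurs at the image positions of illegal turns of $\gamma$.

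I would first decompose $\gamma$ at its illegal turns into maximal legal subpaths, $\gamma = \alpha_1 \alpha_2 \cdots \alpha_k$ (cyclically). By the train track property each $f(\alpha_j)$ is legal, hence reduced, so when computing $\gamma' = [f(\gamma)]$ from $f(\gamma) = f(\alpha_1)\cdots f(\alpha_k)$ all cancellation is concentrated at the junctions between the $f(\alpha_j)$'s, i.e.\ at the images of the illegal turns of $\gamma$. Consequently, for each concatenation point $Q_j$ of the given decomposition $\gamma' = \gamma'_1 \circ \cdots \circ \gamma'_t$ the two edges of $\gamma'$ meeting at $Q_j$ are surviving edges of some $f(\alpha_r)$ and some $f(\alpha_s)$, while the intermediate segments $f(\alpha_{r+1}), \ldots, f(\alpha_{s-1})$ together with the adjacent portions of $f(\alpha_r)$ and $f(\alpha_s)$ have been fully cancelled. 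The range from $\alpha_r$ to $\alpha_s$ in $\gamma$ then contains at least one illegal turn, and I choose any such illegal turn as the pullback $P_j$ of $Q_j$.

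Cutting $\gamma$ at the points $P_1, \ldots, P_t$ yields a decomposition as illegal cyclic concatenation $\gamma = \gamma_1 \circ \gamma_2 \circ \cdots \circ \gamma_t$. For the subpath claim, confluence of free reduction gives
\[
\gamma' \;=\; [f(\gamma)] \;=\; [f(\gamma_1) \cdot f(\gamma_2) \cdots f(\gamma_t)] \;=\; [\, [f(\gamma_1)] \cdot [f(\gamma_2)] \cdots [f(\gamma_t)] \,],
\]
and since each $[f(\gamma_i)]$ is already reduced, the outer reduction on the right can only perform cancellation at the junctions between consecutive $[f(\gamma_i)]$'s. Hence $\gamma'_i$ is obtained from $[f(\gamma_i)]$ by possibly deleting an initial block (cancelled against $[f(\gamma_{i-1})]$) and a terminal block (cancelled against $[f(\gamma_{i+1})]$), and therefore embeds as a subpath of $[f(\gamma_i)]$.

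The main subtlety I expect to have to handle carefully is that when the intervening legal segments are short, cancellation in $f(\gamma)$ can span several consecutive illegal turns of $\gamma$ at once, so the pullback of $Q_j$ to an illegal turn $P_j$ of $\gamma$ is not canonical. I would argue that any choice of $P_j$ within the cancellation span works, because the confluence argument above is insensitive to which illegal turn in the span we select; translating this into a clean bookkeeping statement about the decomposition is the place requiring most attention.
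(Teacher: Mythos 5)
Your argument is correct, and it rests on the same key mechanism as the paper's --- since $f$ is a train track map, all cancellation in $f(\gamma)$ occurs at images of illegal turns of $\gamma$ --- but the organization is genuinely different. The paper proceeds iteratively and ``greedily'': after cutting $\gamma$ open at one illegal turn, it takes the shortest prefix $\gamma(k')$ whose reduced image contains $\gamma'_1$, argues that the legal path $f(e_{k'})$ cannot cross the illegal turn $Q_1$ inside the surviving part of $\gamma'$ so the cut after $e_{k'}$ must itself be illegal, and then repeats for $\gamma'_2, \ldots, \gamma'_{t-1}$. You instead decompose $\gamma$ once and for all into its maximal legal segments $\alpha_1, \ldots, \alpha_k$, observe that the two surviving edges meeting at $Q_j$ must come from distinct $f(\alpha_r)$ and $f(\alpha_s)$ (since $f(\alpha_r)$ is legal while $Q_j$ is not), and cut at any illegal turn in between; confluence of free reduction then shows the surviving middle block of each $[f(\gamma_i)]$ is exactly $\gamma'_i$. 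Your route makes the non-canonicity of the pullback explicit, which the paper's minimal-prefix choice hides, and it cleanly separates the choice of cut points from the verification that the pieces align. The one step to spell out fully is why the surviving part of $[f(\gamma_i)]$ in $\gamma'$ is exactly $\gamma'_i$ and not more or less: this holds because your choice of $P_j$ places $\alpha_{r_j}$ inside $\gamma_j$ and $\alpha_{s_j}$ inside $\gamma_{j+1}$, so the boundary edge $a_j$ survives from within $f(\gamma_j)$ and $b_j$ from within $f(\gamma_{j+1})$, pinning the cancellation boundary at $Q_j$. You flag this bookkeeping as the subtlety, and indeed it is the crux, but the choice you made already resolves it; the proof is sound.
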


\begin{proof}
We cut the loop $\gamma$ at some illegal turn to get a (closed) path $\gamma = e_1 e_2 \ldots e_q$. We consider the initial subpaths $\gamma(k) = e_1 e_2 \ldots e_k$ of $\gamma$ for $k = 1, 2, \ldots$. Let $k'$ be the smallest positive integer such that the reduced image path $[f(\gamma(k'))]$ contains the path $\gamma'_1$ as a subpath. Since $k'$ is the smallest such integer, it follows that the path $f(e_{k'})$ passes through the last edge of $\gamma'_1$. Note that $f(e_{k'})$ is a legal path
and that $\gamma'_i$ terminates at an illegal turn,
so that the 
endpoint 
of $f(e_{k'})$
must lie somewhere in the backtracking subpath of the possibly unreduced path $[f(e_1 e_2 \ldots e_{k')}] \circ [f(e_{k'+ 1} e_{k'+2} \ldots e_q)]$. It follows that the reduced image path $[f(e_{k'+ 1} e_{k'+2} \ldots e_q)]$ contains the path $\gamma'_2 \circ \ldots \circ \gamma'_t$ as subpath.

Thus we define $\gamma_1 := \gamma(k')$, and proceed iteratively in precisely the same fashion, thus finding iteratively $\gamma_2, \gamma_3, \ldots \gamma_{t-1}$. As above, it follows that the ``left-over" terminal subpath of $\gamma$ has the property that its reduced image contains $\gamma'_t$ as subpath, so that we can define this left-over subpath to be the final factor path $\gamma_t$.
\end{proof}

%%%%%%%%%%%%%%%%%%

\subsection{Goodness versus illegal turns}

We recall from Definition \ref{defn-INPs}
that a multi-INP is a legal concatenation of finitely many INPs along their endpoints. For the remainder of this section we will concentrate on graph maps $f: \Gamma \to \Gamma$ which satisfy the following:

\begin{conv}
\label{conv-hyperbolic}
Let $f: \Gamma \to \Gamma$ be an expanding train track map as in 
Convention 
\ref{endpoints-of-INPs}.
We assume 
furthermore that 
there is an upper bound $A(f)$ to the number  of INP factor paths in any multi-INP path $\gamma\in\Gamma$. Note that this condition implies in particular that in $\Gamma$ there can not be a non-trivial loop which is a cyclic legal concatenation of INPs.
\end{conv}

The next 
lemma 
shows 
in particular 
that an expanding train track map 
which represents 
a hyperbolic outer automorphism naturally satisfies the conditions given in Convention \ref{conv-hyperbolic}. 

\begin{lem}
\label{INPfactors-} 
For any expanding train track map $f:\Gamma\to\Gamma$, 
which has the property that no positive power of $f$ fixes the homotopy class of any non-trivial loop in $\Gamma$,
there is an upper bound to the number of INP factors on any multi-INP path. 
\end{lem}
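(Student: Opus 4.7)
The plan is to argue by contradiction using a pigeonhole argument based on the finiteness of INPs established in Lemma~\ref{finitely-many-INPs}. There are only finitely many oriented INPs in $\Gamma$ (each INP has at most two orientations, and there are finitely many INPs); let $B$ denote their total count. Suppose for contradiction that there exists a multi-INP $\gamma = \eta_1 \circ \eta_2 \circ \cdots \circ \eta_m$ with $m > B$. By pigeonhole, some two factors agree as oriented INPs, so there exist indices $1 \leq i < j \leq m$ with $\eta_i = \eta_j$. The key observation is that the sub-multi-INP $\gamma' := \eta_i \circ \eta_{i+1} \circ \cdots \circ \eta_{j-1}$ is then a closed loop in $\Gamma$: the terminal endpoint of $\eta_{j-1}$ coincides with the initial endpoint of $\eta_j = \eta_i$. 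Moreover the turn between $\eta_{j-1}$ and $\eta_j$ appearing in $\gamma$ is legal, and this is precisely the ``closing turn'' of $\gamma'$ at its basepoint; so $\gamma'$ is a cyclic legal concatenation of INPs.

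I would then verify that $\gamma'$ represents a non-trivial element of $\pi_1(\Gamma)$: since each INP is a non-trivial reduced path and legal concatenations of reduced paths remain reduced, $\gamma'$ is a cyclically reduced loop of positive edge-length, hence non-trivial.

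The final step is to exhibit a positive power of $f$ that fixes the homotopy class of $\gamma'$, yielding the desired contradiction with the hypothesis of the lemma. For each factor $\eta_k$ there exists $t_k \geq 1$ with $[f^{t_k}(\eta_k)] = \eta_k$ rel endpoints; setting $T := \mathrm{lcm}(t_i, \ldots, t_{j-1})$, one has $[f^T(\eta_k)] = \eta_k$ rel endpoints for every $k$, and in particular $f^T$ fixes every endpoint-vertex of every $\eta_k$ (these being vertices by Convention~\ref{endpoints-of-INPs}). Since the derivative of a train track map sends legal turns to legal turns, no cancellation can occur at any of the concatenation vertices of $\gamma'$ when reducing $f^T(\gamma') = f^T(\eta_i) \circ \cdots \circ f^T(\eta_{j-1})$. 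Therefore $[f^T(\gamma')] = \gamma'$ as reduced loops, contradicting the standing hypothesis that no positive power of $f$ fixes the homotopy class of a non-trivial loop. The main subtlety to watch is the non-cancellation at the closing concatenation vertex of the cyclic loop $\gamma'$, which follows from the legality of the closing turn together with the preservation of legal turns under iteration of a train track map; the finiteness ingredient from Lemma~\ref{finitely-many-INPs} and the vertex-endpoint convention in Convention~\ref{endpoints-of-INPs} together make the pigeonhole and the reduction arguments go through cleanly.
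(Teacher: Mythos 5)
Your proposal is correct and follows essentially the same route as the paper's own (very terse) proof: pigeonhole on the finitely many oriented INPs from Lemma~\ref{finitely-many-INPs} forces a repeated factor, the intervening sub-concatenation forms a cyclically reduced loop, and a power of $f$ (the lcm of the INP periods) fixes that loop, contradicting the hypothesis. You supply more detail than the paper, but every step you fill in is sound.
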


\begin{proof} 
There is an upper bound $A \geq 0$ for number of INPs in $\Gamma$, see Lemma \ref{finitely-many-INPs}. Hence, any multi-INP with more than $A(f) := 2A$ factors would have to run over the same INP twice in the same direction. Thus we obtain as subpath a non-trivial loop which is fixed by some positive power of $f$, violating the 
given 
assumption.
\end{proof}

\begin{lem} 
\label{badimage}
For any 
%expanding 
train track map $f:\Gamma\to\Gamma$ as in Convention \ref{conv-hyperbolic}
there exists an exponent $s\ge1$ such that for any reduced path $\gamma\in\Gamma$ with reduced image path $\gamma'=[f^s(\gamma)]$ the following holds: If $\g(\gamma')=0$ and satisfies $ILT(\gamma')\ge A(f)+1$, then 
\[
ILT(\gamma)>ILT(\gamma').
\]
\end{lem}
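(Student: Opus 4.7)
The plan is to argue by contradiction. Since the number of illegal turns is non-increasing under iteration of a train track map, negating the conclusion amounts to assuming $\ILT(\gamma) = \ILT(\gamma') =: k$. I will choose the exponent $s$ large enough that three conditions hold simultaneously: $s \geq r(f)$ so that Proposition \ref{ILT-decrease} applies to $\gamma$; $f^s$ sends every pre-INP of $\Gamma$ to a genuine INP, which is possible by the finiteness result of Lemma \ref{finitely-many-INPs}; and $(\lambda')^s > 2C$, so that legal paths of positive length get blown up well past twice the critical constant.

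With this choice, monotonicity of $\ILT$ forces $\ILT(\gamma) = \ILT([f^r(\gamma)])$, and Proposition \ref{ILT-decrease} then produces a decomposition
\[
\gamma = \ell_0 \cdot \eta_1 \cdot \ell_1 \cdot \eta_2 \cdots \eta_k \cdot \ell_k,
\]
in which each $\eta_i$ is an INP or pre-INP whose tip accounts for one illegal turn of $\gamma$, the $\eta_i$ are pairwise disjoint or share only endpoints, and each $\ell_i$ is a (possibly trivial) legal subpath. The junctions joining the $\ell_i$ to the adjacent $\eta_j$ are legal turns, since by construction $\gamma$ carries no illegal turn outside the tips.

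Next I will track this decomposition through $f^s$. The key observation is that, because legal turns are preserved by the derivative map of a train track map, the only reductions occurring when passing from $f^s(\gamma)$ to $\gamma'$ are those internal to each $f^s(\eta_i)$ near the image of its tip; in particular the first and last edges of each $f^s(\eta_i)$ survive, and no cancellation crosses any boundary. Hence
\[
\gamma' = f^s(\ell_0) \cdot \eta_1^s \cdot f^s(\ell_1) \cdots \eta_k^s \cdot f^s(\ell_k)
\]
with each $\eta_i^s := [f^s(\eta_i)]$ a genuine INP by the second condition on $s$, and with all junctions legal. Because $\g(\gamma') = 0$, every edge of $\gamma'$ must sit within edge-distance $C$ of one of the $k$ tips; but for any index $1 \leq i \leq k - 1$ with $|\ell_i| \geq 1$ one has $|f^s(\ell_i)| \geq (\lambda')^s > 2C$, so the middle edges of $f^s(\ell_i)$ would be farther than $C$ from both of the nearest illegal turns (the tips of $\eta_i^s$ and $\eta_{i+1}^s$) and hence from all of them. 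This contradicts $\g(\gamma') = 0$, and therefore $\ell_i$ must be trivial for $1 \leq i \leq k-1$.

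It then follows that $\eta_1^s \eta_2^s \cdots \eta_k^s$ is a legal concatenation of $k$ INPs, i.e., a multi-INP with $k \geq A(f) + 1$ factors, contradicting Convention \ref{conv-hyperbolic} and finishing the argument. The delicate point that I expect to require the most care is the verification that reducing $f^s(\gamma)$ introduces no unexpected cancellation across the boundaries separating the $f^s(\ell_i)$ from the $f^s(\eta_{i \pm 1})$: this is what underpins the clean decomposition of $\gamma'$ and relies on $Tf$ preserving legal turns, together with the observation that the internal reduction inside each $f^s(\eta_i)$ leaves its first and last edge unchanged.
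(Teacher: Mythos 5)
Your proof is correct and follows essentially the same route as the paper's: both invoke Proposition \ref{ILT-decrease} to obtain the (pre-)INP decomposition, exclude the legal gaps via expansion past $2C$ and the hypothesis $\g(\gamma')=0$, and then contradict the multi-INP bound $A(f)$ from Convention \ref{conv-hyperbolic}. The only cosmetic difference is that you build the condition ``$f^s$ turns all pre-INPs into INPs'' into the choice of $s$, whereas the paper defers this to a further iterate $f^{s'}$ at the very end.
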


\begin{proof}
Since 
the map $f$ is expanding, for the critical constant 
$C \geq 0$ given in Definition \ref{goodness} there exists an integer $s\ge1$ such that $|f^s(e)|\ge2C+1$ for every edge $e\in\Gamma$. We can furthermore assume $s\ge r$, where $r=r(f)$ is given by Lemma \ref{ILT-decrease}.  Thus, by Lemma \ref{ILT-decrease} we have 
$$ILT(\gamma)>ILT(\gamma')\,,$$
unless every illegal turn on $\gamma$ is the tip of an INP or pre-INP $\eta_i$, and any two such subpaths $\eta_i$ are disjoint or they overlap precisely at a common endpoint. 

The case where two such paths are disjoint can be excluded as follows: If there is an edge $e$ in $\gamma$ which doesn't belong to any of the $\eta_i$, then $f^{s}(e)$ is a legal subpath of $[f^{s}(\gamma)]=\gamma'$ of length greater than $2C+1$, contradicting the assumption that $\g(\gamma')=0$. 

Thus we can assume that any two of the subpaths $\eta_i$ overlap precisely in a common endpoint, and that there is no non-trivial initial of final subpath of $\gamma$ outside of the concatenation of all the $\eta_i$.  Therefore, for some $s' \geq 0$, the iterate $f^{s'}(\gamma')$ is a multi-INP with $ILT(\gamma')\ge A(f)+1$ factors, which contradicts Convention \ref{conv-hyperbolic}. 
Hence the conclusion of the Lemma follows. 
\end{proof}

\begin{lem}
\label{goodnessthreshold}
Let $f:\Gamma\to\Gamma$ 
and $A(f)$ be as in Convention \ref{conv-hyperbolic}. 
%be an expanding train track map such that there is an upper bound $A(f)$ to number of INP factors in any multi-INP. 
Then there exists a constant $\delta$ with 
$0 < \delta \leq 1$
so that the following holds: Every reduced 
loop $\gamma$
in $\Gamma$ with $ILT(\gamma) \geq A(f)+1$ can be written as cyclic illegal concatenation
\[
\gamma=\gamma_1\circ\gamma_2\dotsc\circ\gamma_{2m}
\]
such that for every odd index $j$, the subpath $\gamma_j$ is either trivial or satisfies
\[
\g(\gamma_j)\ge\delta \, .
\] 
For every even index $k$ the subpath $\gamma_k$ is non-trivial and satisfies $\g(\gamma_k)=0$; moreover, we have:
\[
A(f)+1 \le ILT(\gamma_k)\le 2 A(f)+1
\]
\end{lem}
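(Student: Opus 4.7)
The plan is to walk around $\gamma$ cyclically, identify the maximal \emph{clusters} of illegal turns that sit close together on $\gamma$, and use the large clusters to assemble the even-indexed pieces $\gamma_k$ while routing everything else into the odd-indexed pieces $\gamma_j$. List the illegal turns of $\gamma$ cyclically as $\tau_1,\dots,\tau_N$, so that $N = \ILT(\gamma) \geq A(f)+1$. Fix a threshold $D$ of order $C$ and declare $\tau_i$ and $\tau_{i+1}$ to lie in the same cluster when there are at most $D$ edges between them; this yields maximal cyclic clusters $\mathcal{B}_1,\dots,\mathcal{B}_p$ of sizes $n_1,\dots,n_p$, with the key property that every edge of $\gamma$ lying between two clustered turns is bad.

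For each cluster $\mathcal{B}_i$ of size large enough to accommodate the interior-turn bound $[A(f)+1,2A(f)+1]$ (concretely $n_i \geq A(f)+3$), partition its illegal turns into consecutive subgroups of appropriate sizes by elementary arithmetic, and declare the boundary turn of each subgroup to be a cut of the decomposition. These cuts are themselves illegal turns of $\gamma$, as required for an illegal concatenation, and the calibration of $D$ guarantees that the subpath $\gamma_k$ assigned to each subgroup has every edge within distance $<C$ of some interior illegal turn of $\gamma_k$, so $\g(\gamma_k)=0$ and $A(f)+1 \leq \ILT(\gamma_k) \leq 2A(f)+1$. Each odd piece $\gamma_j$ is then the arc of $\gamma$ between two consecutive $\gamma_k$'s; it is composed of long gaps of $\gamma$ (contributing good edges) and, possibly, of small clusters with $n_i \leq A(f)+2$ that were not promoted to $\gamma_k$'s. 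Since consecutive small clusters inside $\gamma_j$ are separated by gaps of more than $D$ edges each contributing at least one good edge, while a small cluster of size $m$ contributes at most $2Cm$ bad edges to $\gamma_j$, a direct ratio estimate gives $\g(\gamma_j) \geq \delta$ for some constant $\delta = \delta(C,A(f))>0$ depending only on the train-track data of $f$; a trivial $\gamma_j$ simply records two $\gamma_k$'s that share a cut point.

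The principal difficulty is the correct calibration of $D$: it must be small enough that placing a cut at a clustered turn keeps the next interior illegal turn of $\gamma_k$ within $<C$ edges of the first edge of $\gamma_k$ (which is what forces $\g(\gamma_k)=0$), yet large enough that clustered edges are all bad and that small clusters are sufficiently separated inside $\gamma_j$ to produce the uniform goodness bound $\delta$. A secondary obstacle is the degenerate case in which every cluster has size $<A(f)+3$, since then no cluster can by itself supply a $\gamma_k$; here the hypothesis $\ILT(\gamma)\geq A(f)+1$ has to be combined with Convention \ref{conv-hyperbolic} and the pseudo-legal structure of iterates (Proposition \ref{pseudo-legal-iterate}) to show that such a configuration is incompatible with the multi-INP length bound $A(f)$, thereby forcing at least one cluster of the size required to host a $\gamma_k$.
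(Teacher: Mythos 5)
Your strategy is a natural dual of the paper's. The paper starts from the maximal legal subpaths of length $\geq 2C+1$ (each such subpath certifies at least one good edge), merges consecutive ones whenever the gap between them has at most $A(f)$ illegal turns, and then subdivides each residual gap, which by construction has $\geq A(f)+1$ illegal turns, into pieces with $\ILT$ between $A(f)+1$ and $2A(f)+1$. You instead cluster the illegal turns by proximity and carve the $\gamma_k$'s out of the large clusters. The calibration tension you identify for $D$ is real but resolvable: if one computes $\g(\gamma_k)$ while counting the concatenation points of the cyclic decomposition as illegal turns (which is exactly the convention the paper's goodness computation uses), then taking the clustering threshold $D$ of order $2C$ simultaneously makes every clustered edge bad and gives every inter-cluster gap at least one good edge.

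The step that fails is your treatment of the degenerate case in which every cluster has size $< A(f)+3$. You claim this case is excluded by Convention \ref{conv-hyperbolic} together with Proposition \ref{pseudo-legal-iterate}, but neither applies. Convention \ref{conv-hyperbolic} bounds only the number of INP factors in a \emph{multi-INP} path, that is, a legal concatenation of INPs in which every illegal turn is the tip of an INP; it places no restriction on an arbitrary reduced loop whose illegal turns are widely spaced and are not tips of INPs or pre-INPs. A loop $\gamma$ with $\ILT(\gamma) \geq A(f)+1$ in which consecutive illegal turns are separated by legal segments of length $5C$, say, satisfies every hypothesis, has no cluster of size $\geq 2$, and produces no candidate $\gamma_k$ under your construction. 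Proposition \ref{pseudo-legal-iterate} concerns the eventual structure of iterates $[f^t(\gamma)]$ and gives no information about the geometry of the given loop. The way out is not to argue this configuration away but to notice that it corresponds to $\g(\gamma)$ already bounded below away from $0$ (which is exactly what the paper's goodness estimate yields when the merging collapses $C_\gamma$ to a single element); in the application, Proposition \ref{dicho} routes that regime into its high-goodness branch, and the decomposition is only needed when $\gamma$ has bad stretches with more than $A(f)$ illegal turns, which is what the paper's merging stopping criterion is designed to isolate.
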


\begin{proof} Let ${L}_{\gamma}$ be the collection of maximal legal subpaths $\gamma'_i$ of $\gamma$ of length $|\gamma'_i| \ge 2C+1$, for $C \geq 0$ as given in Definition \ref{goodness}. Note that any two distinct elements $\gamma'_i, \gamma'_{j}$ 
in this collection are either disjoint or overlap at at single point. In the latter case the turn at the overlap point is illegal, as otherwise they would merge into a longer legal subpath, which violates the maximality of the $\gamma'_i$. 
For any two (on $\gamma$) consecutive paths $\gamma'_i$ and $\gamma'_{i+1}$ in $L_{\gamma}$, if the path $\beta_i$ between them is trivial or satisfies  $ILT(\beta_i)\le A(f)$, then we erase $\gamma'_i$ and $\gamma'_{i+1}$ from the collection ${L}_{\gamma}$ and add in the new path $\gamma'_i\beta_i\gamma'_{i+1}$. 
We continue this process iteratively until all the complementary subpaths $\beta_i$ between any two consecutive elements in our collection satisfies $ILT(\beta_i) \ge A(f)+1$. We call the obtained collection of subpaths $C_{\gamma}$. 

%%%%%%
We now pick a path $\gamma_j$ in the collection $C_{\gamma}$ and consider its ``history'' as being obtained iteratively through joining what amounts to $\ell$ paths $\gamma'_i$ from $L_{\gamma}$. Thus $\gamma_j$ can be written as illegal concatenation 
$$\gamma_j = \gamma'_1 \circ \beta_1 \circ \gamma'_2 \circ \ldots \circ \gamma'_{\ell-1} \circ \beta_{\ell-1} \circ \gamma'_\ell,$$
where each $\gamma'_i$ is legal and of length $|\gamma'_i| \geq 2C+1$. Each $\beta_i$ has at most $A(f)$ illegal turns, and the legal subpaths of $\beta_i$ between these illegal turns have length $\leq 2C$, so that we get $|\beta_i| \leq  (A(f)+1) 2C$. Thus the set of good edges on $\gamma_j$ is given precisely as disjoint union over all the $\gamma'_i$ of the sets of the $|\gamma'_i|- 2C$ edges of $\gamma'_i$ that are not on the two boundary subpaths of length $C$ of $\gamma'_i$. Hence we compute for the goodness:
\[
\g(\gamma_j)
= \frac{\sum_{i=1}^{\ell} (|\gamma'_i|- 2C)}{|\gamma_j|}
= \frac{\sum_{i=1}^{\ell} (|\gamma'_i|- 2C)}{\sum_{i=1}^\ell |\gamma'_i| + \sum_{i=1}^{\ell - 1} |\beta_i| }
\]
\[
\ge\frac{\ell}{(2C+1) \cdot \ell+(A(f)+1)2C\cdot \ell}
= \frac{1}{2C (A(f)+2)+ 1}
\]

We finally add to the collection $C_{\gamma}$ a suitable set of trivial subpaths at illegal turns to get a collection $C'_{\gamma}$, where these trivial paths are chosen as to get as complementary subpaths of $C'_{\gamma}$ only subpaths $\gamma_k$ which satisfy:
\[
A(f)+1 \le ILT(\gamma_k)\le 
2 A(f)+1
\]

Thus setting 
\[
\delta=\frac{1}{2C (A(f)+2)+ 1}
\]
finishes the proof. 
\end{proof}

\begin{prop}
\label{dicho}
Let $f:\Gamma\to\Gamma$ be 
a 
%an expanding  
train track map as in Convention \ref{conv-hyperbolic}, and
let $s\ge1$ be the integer given by Lemma \ref{badimage} and $\delta>0$ be the constant given by Lemma \ref{goodnessthreshold}. Then there exists a constant $R>1$ such that for any reduced loop $\gamma$ in $\Gamma$ 
one has 
either 
\[
(1) \quad
\g([f^{s}(\gamma)])\ge\frac{\delta}{2}
\]
or 
\[
(2) \quad 
ILT(\gamma)\ge R \cdot ILT([f^{s}(\gamma)]).
\]  
\end{prop}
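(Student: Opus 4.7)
Set $\gamma' := [f^s(\gamma)]$ and $N := \ILT(\gamma')$. The plan is to assume that alternative~(1) fails, i.e.\ $\g(\gamma') < \delta/2$, and derive $\ILT(\gamma) \ge R \cdot N$ for a constant $R > 1$ depending only on $C$ and $A(f)$. The argument will naturally split along $N \ge A(f)+1$ versus $1 \le N \le A(f)$ (note $N = 0$ is impossible, since then $\gamma'$ is legal and $\g(\gamma') = 1$).

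In the principal case $N \ge A(f)+1$, I would first apply Lemma~\ref{goodnessthreshold} to $\gamma'$ to obtain a cyclic illegal concatenation $\gamma' = \gamma'_1 \circ \dotsm \circ \gamma'_{2m}$ with odd-indexed pieces trivial or of goodness $\ge \delta$, and even-indexed pieces $\gamma'_k$ of goodness $0$ satisfying $A(f)+1 \le \ILT(\gamma'_k) \le 2A(f)+1$. Since every edge of an even piece sits within $C$ of an illegal turn on that piece, one has $|\gamma'_k| \le 2C(2A(f)+1)$. Good edges of $\gamma'$ only come from odd pieces and (by $\g(\gamma')<\delta/2$) account for less than $|\gamma'|/2$ of the total length, so the even pieces collectively exceed length $|\gamma'|/2$. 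Combined with the upper bound on $|\gamma'_k|$, this yields the key estimate
\[
m \;\ge\; \frac{|\gamma'|}{4C(2A(f)+1)} \;\ge\; \frac{N}{4C(2A(f)+1)}.
\]

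Next, I would pull back this decomposition via Lemma~\ref{pull-back-concatenation} (applied to $f^s$, iteratively if needed) to get a cyclic illegal concatenation $\gamma = \gamma_1 \circ \dotsm \circ \gamma_{2m}$ with $\gamma'_i \subseteq [f^s(\gamma_i)]$ for every $i$. For each even $k$ I would isolate a subpath $\gamma^*_k \subseteq \gamma_k$ whose reduced $f^s$-image equals $\gamma'_k$ exactly; Lemma~\ref{badimage} then applies (since $\g(\gamma'_k)=0$ and $\ILT(\gamma'_k) \ge A(f)+1$) and yields $\ILT(\gamma_k) \ge \ILT(\gamma^*_k) > \ILT(\gamma'_k)$, i.e.\ at least one extra illegal turn per even piece. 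Summing, with monotonicity of $\ILT$ under $f^s$ on the odd pieces and the $2m$ illegal concatenation turns shared by both cyclic decompositions, gives
\[
\ILT(\gamma) \;\ge\; \sum_i \ILT(\gamma_i) + 2m \;\ge\; \Bigl(\,\sum_i \ILT(\gamma'_i) + m\,\Bigr) + 2m \;=\; N + m \;\ge\; N\!\left(1 + \tfrac{1}{4C(2A(f)+1)}\right).
\]

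For the residual case $1 \le N \le A(f)$, the hypothesis $\g(\gamma')<\delta/2$ forces $|\gamma'| < 4CA(f)$, so only finitely many $\gamma'$ arise. If $\ILT(\gamma) = N$, Lemma~\ref{ILT-decrease} (with $s \ge r(f)$) would force every illegal turn of $\gamma$ to be the tip of an INP or pre-INP, and Convention~\ref{conv-hyperbolic} would then force $\gamma$ to contain a non-trivial legal interstitial subpath $\sigma$; taking $s$ sufficiently large at the outset (compatibly with Lemma~\ref{badimage}), the $f^s$-image of $\sigma$ would be long enough for its middle edges to push $\g(\gamma')\ge\delta/2$, contradicting our assumption. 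Hence $\ILT(\gamma) \ge N+1 \ge N(1+1/A(f))$, and I would set $R := \min\{1+\tfrac{1}{4C(2A(f)+1)},\, 1+\tfrac{1}{A(f)}\}$. The hardest parts will be (a)~isolating $\gamma^*_k$ cleanly inside $\gamma_k$ so that $[f^s(\gamma^*_k)]$ equals $\gamma'_k$ exactly rather than only containing it, allowing a verbatim application of Lemma~\ref{badimage}; and (b)~ruling out the $\ILT(\gamma) = N$ scenario in the bounded case, which requires coordinating the choice of $s$ with $C$, $A(f)$, and the edge-expansion rate.
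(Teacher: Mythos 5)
Your proposal follows essentially the same route as the paper's proof: for the case of many illegal turns it invokes Lemma~\ref{goodnessthreshold} to decompose $\gamma'$, Lemma~\ref{pull-back-concatenation} to pull that decomposition back to $\gamma$, and Lemma~\ref{badimage} to gain at least one extra illegal turn per bad even-indexed piece, finishing with a counting argument that gives the same kind of bound ($\ILT(\gamma)\ge N+m$ with $m$ bounded below linearly in $N$). Your derivation that the even pieces dominate is simply the contrapositive of the paper's sub-case $\sum_{\text{odd}}|\gamma'_j|\ge\sum_{\text{even}}|\gamma'_k|$ (in which the paper concludes alternative~(1) directly); that is fine, though to make it airtight you should spell out that the good edges contribute at least a $\delta$-fraction of the odd pieces, not merely that ``good edges $<|\gamma'|/2$''. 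The main genuine difference is the small-$\ILT$ case: the paper splits on $\ILT(\gamma)\le A(f)$ and uses Proposition~\ref{pseudo-legal-iterate} together with Proposition~\ref{goodness-growth} to force alternative~(1), whereas you split on $N=\ILT(\gamma')\le A(f)$ and argue by finiteness plus an interstitial legal subpath to force alternative~(2) with ratio $1+1/A(f)$; both are workable but yield slightly different constants, and your version requires coordinating the choice of $s$ (which the paper also does, so this is not a real objection). Finally, you correctly flag the delicate point: Lemma~\ref{badimage} is stated with the hypothesis $\g([f^s(\gamma_k)])=0$, but Lemma~\ref{pull-back-concatenation} only guarantees $\gamma'_k\subseteq[f^s(\gamma_k)]$, and the legal overshoot pieces at the two ends can a priori have length up to the cancellation bound of $f^s$, which may exceed $C$ and hence create good edges on $[f^s(\gamma_k)]$. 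The paper's own proof applies Lemma~\ref{badimage} directly to $\gamma_k$ without addressing this, so your concern applies to both arguments equally; your proposed fix (isolating $\gamma^*_k$ with exact image $\gamma'_k$) is not quite achievable combinatorially, and the cleaner repair is to prove a mild strengthening of Lemma~\ref{badimage} that allows bounded-length legal extensions at the two ends of $\gamma'$.
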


\begin{proof}
By  
\ref{pseudo-legal-iterate} 
there is an exponent $t \geq 0$ such that for any loop $\gamma$ with less than $A(f)+1$ illegal turns the loop $\gamma' = [f^t(\gamma)]$ is pseudo-legal. From the 
assumption that $f$ satisfies Convention \ref{conv-hyperbolic}
it follows that $\gamma'$ is not a legal concatenation of INPs, so that it must have at least one good edge. Since iteration of $f$ only decreases the number of illegal turns, we obtain from equality (\ref{illegal-bad}) that 
$$\g(\gamma) \geq \frac{1}{2C(A(f)+1) +1} \, .$$
Thus the first inequality from our assertion follows for a suitable choice of $s$ from 
Proposition
\ref{goodness-growth}.

Thus we can now assume that $ILT(\gamma) \geq A(f)+1$, and thus that
$[f^{s}(\gamma)]=\gamma'_1\circ\dotsc\circ\gamma'_{2m}$ is a decomposition as given by Lemma \ref{goodnessthreshold}. There are two cases to consider:
Assume that 
\[
\sum_{j\ odd}|\gamma'_j|\ge\sum_{k\ even}|\gamma'_k|. 
\]
For  
any 
odd index $j$ 
the 
non-trivial path $\gamma'_j$ has $\g(\gamma'_j)\ge\delta$, which together with the  
last inequality 
implies:
\[
\g([f^{s}(\gamma)])\ge\frac{\delta}{2}
\]
Now, assume that: 
\[
\sum_{j\ odd}|\gamma'_j|\le\sum_{k\ even}|\gamma'_k|
\] 

Let $\gamma=\gamma_1\circ\dotsc\circ\gamma_{2m}$ be an illegal cyclic 
concatenation given by Lemma \ref{pull-back-concatenation} so that, for each $i=1,\dotsc, 2m$, the reduced image $[f^{s}(\gamma_i)]$ contains $\gamma'_i$ as a subpath.  Hence, for every even index, we apply Lemma \ref{badimage} to see that 
\[
ILT(\gamma_k)>ILT([f^{s}(\gamma_{k})])\ge ILT(\gamma'_k).
\] Since the number if illegal turns never increases when applying a train track map, for each odd index $j$, we have 
\[
ILT(\gamma_j)\ge ILT([f^{s}(\gamma_j)])\ge ILT(\gamma'_j).
\]
Combining last two inequalities we obtain:
\begin{equation}\label{eqn1}
ILT(\gamma)>ILT([f^{s}(\gamma)])+m.
\end{equation}

We also observe that 
the 
number of illegal turns in $[f^{s}(\gamma)]$ is equal to the sum of 
the
number of illegal turns in 
the 
odd indexed subpaths, 
the 
number of illegal turns in 
the 
even indexed subpaths, and 
the 
number of illegal turns at concatenation points:
\begin{equation}\label{eqn2}
ILT([f^{s}(\gamma)])\le\sum_{j\ odd}ILT(\gamma'_j)+\sum_{k\ even}ILT(\gamma'_k)+2m
\end{equation}
Now, note that 
\begin{equation}\label{eqn3}
\sum_{j\ odd}ILT(\gamma'_j)\le\sum_{j\ odd}|\gamma'_j|\le\sum_{k\ even}|\gamma'_k|,
\end{equation}
by assumption. 

For each $\gamma'_k$ (with even index $k$), since 
Lemma \ref{goodnessthreshold} assures
$\g(\gamma'_k)=0$ and $ILT(\gamma'_k)\le 2 (A(f)+1)$, we have 
\[
|\gamma'_k|\le 2C(2(A(f)+1)+1),
\]
which together with (\ref{eqn3}) implies that:
\begin{equation}\label{eqn4}
\sum_{j\ odd}ILT(\gamma'_j)\le 2mC(2(A(f)+1)+1) 
\end{equation}
Furthermore, from $ILT(\gamma'_k)\le 2 (A(f)+1)$ for even index $k$ we deduce
\begin{equation}\label{eqn4.5}
\sum_{k\ even}ILT(\gamma'_k)\le 2m(A(f)+1) 
\end{equation}

Using (\ref{eqn2}), (\ref{eqn4}) and (\ref{eqn4.5}) we obtain:
\begin{equation}\label{eqn5}
ILT([f^{s}(\gamma)])\le2mC(2(A(f)+1)+1)+2m(A(f)+1)+2m
\end{equation} 

Using (\ref{eqn1}) and (\ref{eqn5}), we have:
\begin{align*}
\frac{ILT(\gamma)}{ILT([f^{s}(\gamma)])}&\ge 1+\frac{m}{ILT([f^{s}(\gamma)])}\\
&\ge1+\frac{m}{2mC(2(A(f)+1)+1)+2m(A(f)+1)+2m}\\
&\ge1+\frac{1}{2C(2(A(f)+1)+1)+2(A(f)+1)+2}
\end{align*}

Therefore, the conclusion of the lemma holds for:
$$R=1+\dfrac{1}{2C(2(A(f)+1)+1)+2(A(f)+1)+2}$$

\end{proof}

%%%%%%%%%%%%%%
\subsection{Uniform goodness in the future or the past}

For the convenience of the reader we first prove a mild generalization of Proposition \ref{dicho}, which will be a crucial ingredient in the proof of the next proposition. 

\begin{prop}
\label{dichonew}
Let $f:\Gamma\to\Gamma$ be 
%an expanding train track map 
as in Convention \ref{conv-hyperbolic}.
Given any constants  
$0 < \delta_1 < 1$ and 
$R_1>1$, 
there exist an integer $s_1>0$ so that for any reduced 
loops $\gamma$ and $\gamma'$ in $\Gamma$ with $[f^{s_1}(\gamma')] = \gamma$
one has either
\[
(i)\  \g([f^{s_1}(\gamma)])\ge
\delta_1
\]
or 
\[
(ii)\  ILT(\gamma')\ge R_1 \cdot |\gamma|_{\Gamma}\, .
\]  
\end{prop}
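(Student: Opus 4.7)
The plan is to iterate Proposition \ref{dicho} along the forward orbit of $\gamma'$ under $f^s$, tracking which of the two alternatives of that dichotomy is realized at each step. Let $\delta>0$, $R>1$, and $s$ be the constants produced by Proposition \ref{dicho}. I will define intermediate loops $\gamma_i := [f^{is}(\gamma')]$ for $i=0,1,\ldots,2n$, where $n$ is an integer and $s_1 := ns$ is to be chosen below. Note that $\gamma_0 = \gamma'$, $\gamma_n = \gamma$, and $\gamma_{2n} = [f^{s_1}(\gamma)]$, and that the train-track property ensures $\gamma_{2n} = [f^{(2n-i)s}(\gamma_i)]$ for every $i$. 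For each $i=1,\ldots,n$, Proposition \ref{dicho} applied to $\gamma_{i-1}$ provides either $\g(\gamma_i)\ge \delta/2$ or $\ILT(\gamma_{i-1})\ge R\cdot \ILT(\gamma_i)$.

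Next, I would choose $n$ to satisfy two independent requirements. First, using Corollary \ref{goodbig}, fix $M' := M'(\delta/2,\,1-\delta_1)$ so that any loop $\alpha$ with $\g(\alpha)\ge \delta/2$ satisfies $\g([f^m(\alpha)])\ge \delta_1$ for all $m\ge M'$; I require $ns\ge M'$. Second, letting $C$ denote the critical constant of Definition \ref{goodness}, I require $R^n \ge \tfrac{2CR_1}{1-\delta/2}$. Both conditions hold for all sufficiently large $n$ since $R>1$. Set $s_1 := ns$.

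Now the case split. \emph{Case 1:} there exists $i\in\{0,1,\ldots,n\}$ with $\g(\gamma_i)\ge \delta/2$. Since $(2n-i)s\ge ns\ge M'$ and $\gamma_{2n}=[f^{(2n-i)s}(\gamma_i)]$, Corollary \ref{goodbig} gives $\g([f^{s_1}(\gamma)]) = \g(\gamma_{2n})\ge \delta_1$, so conclusion (i) holds. \emph{Case 2:} $\g(\gamma_i)<\delta/2$ for every $i=0,1,\ldots,n$. Then Proposition \ref{dicho} forces $\ILT(\gamma_{i-1})\ge R\cdot \ILT(\gamma_i)$ for every $i=1,\ldots,n$, hence
\[
\ILT(\gamma')=\ILT(\gamma_0)\ge R^n\cdot \ILT(\gamma_n).
\]
Since $\g(\gamma_n)<\delta/2<1$, the loop $\gamma_n$ contains at least one illegal turn, and the fraction of bad edges in $\gamma_n$ exceeds $1-\delta/2$; combined with the bad-edge bound $\#\{\text{bad edges in }\gamma_n\}\le 2C\cdot \ILT(\gamma_n)$ from \eqref{illegal-bad}, this yields $|\gamma|=|\gamma_n|<\tfrac{2C}{1-\delta/2}\,\ILT(\gamma_n)$. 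Hence
\[
\ILT(\gamma')\ge R^n\cdot \frac{1-\delta/2}{2C}\,|\gamma|\ge R_1\cdot |\gamma|,
\]
which is conclusion (ii).

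The main obstacle—and the reason for iterating over $2n$ steps rather than $n$—is the two-sided coordination: the condition $\g(\gamma_i)\ge \delta/2$ must be upgraded to $\g(\gamma_{2n})\ge \delta_1$ via Corollary \ref{goodbig}, which demands a substantial buffer of further iterations \emph{after} step $i\le n$, while simultaneously the failure of Case 1 all the way through step $n$ must force enough backward ILT-growth to dominate the length of $\gamma=\gamma_n$ (rather than that of $\gamma'$). Choosing $n$ large enough to satisfy both demands simultaneously, and defining $s_1=ns$, closes the argument. A minor technical point is ensuring that $\gamma_n$ is genuinely a reduced loop to which \eqref{illegal-bad} applies, but this is automatic from the definition $\gamma_n=[f^{ns}(\gamma')]$ together with the standing assumption that $f$ is a train track map.
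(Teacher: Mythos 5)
Your proof is correct and follows the same basic strategy as the paper's: iterate Proposition \ref{dicho} along the $f^s$-orbit, then upgrade either to high goodness via Corollary \ref{goodbig} or to geometric $\ILT$-growth via the bad-edge bound from \eqref{illegal-bad}. The one organizational difference worth noting is that you bypass the paper's preliminary replacement of $f$ by a power to make goodness monotone: the paper applies the dichotomy once at $\gamma$ and then invokes monotonicity to force the second alternative of Proposition \ref{dicho} at every predecessor $\gamma_1,\gamma_2,\dots$ in the backward chain, whereas you simply partition on whether any of $\gamma_0,\dots,\gamma_n$ already attains goodness $\ge\delta/2$, which makes the monotonicity step unnecessary while delivering the same telescoping inequality $\ILT(\gamma')\ge R^n\ILT(\gamma)$. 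Both routes rely on exactly the same ingredients (Proposition \ref{dicho}, Corollary \ref{goodbig}, the bad-edge bound), so this is a small streamlining rather than a new method, but it is a clean one.
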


\begin{proof}
We first replace $f$ by a positive power (say $f^r$, cited at the end of the proof) as given by Proposition \ref{goodness-growth} so that for the rest of the proof we can assume that goodness is monotone.
Let $s, \delta$ and $R$ be as in Lemma \ref{dicho} and let $\gamma$ be any reduced loop in $\Gamma$. Assume that for $\gamma$
that alternative (1) of 
Lemma \ref{dicho} 
holds, i.e. $\g([f^{s}(\gamma)])\ge\frac{\delta}{2}$.  Then by 
Corollary \ref{goodbig} there is an exponent $M\ge1$ such that  
\[
\g([(f^{s})^{m}(\gamma)])\ge\delta_1
\]
for all $m\ge M$. 
On the other hand, if $\g([f^{s}(\gamma)])<\frac{\delta}{2}$, 
then 
Lemma \ref{dicho}  assures that 
\[
ILT(\gamma)\ge R \cdot ILT([f^{s}(\gamma)]). 
\]
We 
now 
claim that 
\[
ILT(
\gamma_1)
\ge R\cdot ILT(\gamma)
\] 
for any reduced loop $\gamma_1$ with 
$[f^{s}(\gamma_1)]  = \gamma$.
To see this, apply Lemma \ref{dicho} to the loop 
$\gamma_1$. 
If one had 
\[
\g([f^{s}(\gamma_1
)])=\g(\gamma)\ge\frac{\delta}{2}, 
\]
then by monotonicity 
of goodness this
 would also imply $\g([f^{s}(\gamma)])\ge\frac{\delta}{2}$, which contradicts with our assumption. Hence for 
 $\gamma_1$ alternative (2) of 
 Lemma \ref{dicho}  holds,
giving indeed  
\[
ILT(\gamma_1
)\ge R\cdot ILT(\gamma).
\]
Repeating the same argument iteratively shows that for any sequence of reduced  loops $\gamma_{M'}$, defined iteratively through $[f^s(\gamma_{M'})] = \gamma_{M'-1}$ for any $M' \geq 2$, the inequality
\[
\gamma_{M'}
\ge R^{M'}\cdot ILT(\gamma). 
\]
holds.
Also, notice that since $\g(\gamma)<\frac{\delta}{2}$ we have 
\[
\dfrac{\#\{{\rm bad \,\,  edges \,\, in \,\,} \gamma\}
}{|\gamma|_\Gamma}\ge1-\frac{\delta}{2},
\]
and 
by the inequalities (\ref{illegal-bad}) 
we have
furthermore
\[
\#\{{\rm bad \,\,  edges \,\, in \,\,} \gamma\}\le
2C \cdot ILT(\gamma).
\]
Hence we obtain 
\begin{align*}
ILT(\gamma_{M'})&\ge R^{M'}\cdot ILT(\gamma)\\
&\ge R^{M'}\cdot\frac{\#\{{\rm bad \,\,  edges \,\, in \,\,} \gamma\}}{2C}\\
&\ge R^{M'} \cdot (1-\frac{\delta}{2})\frac{1}{2C}|\gamma|_\Gamma
\end{align*}

Let $M' \geq 1$ 
be such that $R^{M'}\cdot(1-\frac{\delta}{2})\frac{1}{2C}\ge R_1$. Then $s_1=\max\{M,M'\}$ satisfies the requirements of the 
statement of Proposition \ref{dichonew}  
up to replacing $f$ by the power $f^r$ as done at the beginning of the proof.
\end{proof}

In addition to the train track map $f$ we now consider a 
second 
train track map 
$f':\Gamma'\to\Gamma'$,
which also
satisfies the requirements of 
Convention \ref{conv-hyperbolic} and 
which is related to $f$ via maps $h: \Gamma \to \Gamma'$ and $h': \Gamma' \to \Gamma$ such that $f$ and $h' f'h$ are homotopy inverses. 
We also assume that 
the 
lifts 
of $h$ and $h'$ 
to the universal coverings 
are quasi-isometries
(with respect to the simplicial metrics), 
so that there exists a bi-Lipschitz constant $B > 0$ which satisfies for any two ``corresponding'' reduced loops $\gamma$ in $\Gamma$ and $\gamma' := [h(\gamma)]$ the inequalities
\begin{equation}
\label{bi-Lipschitz}
\dfrac{1}{B}\,|\gamma'|_{\Gamma'}\,\,\le\,\,|\gamma|_{\Gamma}\,\,\le\,\, B \,|\gamma'|_{\Gamma'}
\end{equation}
We denote the goodness for the map $f'$ by $\g'$, and the 
critical 
constant for $f'$ from Definition \ref{goodness} by $C'$.

\begin{prop}\label{backforthgoodness}
Let $f: \Gamma \to \Gamma$ and $f':\Gamma' \to \Gamma'$ be as specified in the last paragraph. 

Given a real number $\delta$ so that $0<\delta<1$, there exist a bound $M>0$ such that (up to replacing $f$ and $f'$ by a common power) for any pair of corresponding reduced 
loops $\gamma$ in $\Gamma$ and $\gamma'$ in $\Gamma'$, either 
\[
(a)\  \g([f^n(\gamma)])\ge\delta
\]
or 
\[
(b)\ \g'([(f')^n(\gamma')])\ge\delta
\] 
holds for all $n\ge M$. 
\end{prop}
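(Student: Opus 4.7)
I will argue by contradiction.  Suppose the conclusion fails for some $M$: there exist corresponding reduced loops $\gamma$ in $\Gamma$ and $\gamma'$ in $\Gamma'$ and some $n \geq M$ for which both $\g([f^n(\gamma)])<\delta$ and $\g'([(f')^n(\gamma')])<\delta$.  The plan is to show this forces an upper bound on $n$ depending only on $\delta,f,f',B$, yielding the required $M$.

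\medskip

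\textbf{Reduction and iterated dichotomy on each side.}  By Proposition \ref{goodness-growth}, after replacing $f$ and $f'$ by a common sufficiently large positive power we may assume that $\g$ and $\g'$ are monotone non-decreasing along forward iterates.  Choose a constant $R_1>1$ (pinned down in the last step), and apply Proposition \ref{dichonew} with $\delta_1 = \delta$ to both $f$ and $f'$; after passing to a further common power we may assume the exponents $s_1,s_1'$ given by the proposition both equal $1$.  Write $\gamma_k:=[f^k(\gamma)]$ and $\gamma'_k:=[(f')^k(\gamma')]$.  Monotonicity and $\g(\gamma_n)<\delta$ give $\g(\gamma_k)<\delta$ for $0\le k\le n$, so for each $k=1,\dots,n-1$ the dichotomy of Proposition \ref{dichonew} applied to the pair $(\gamma_k,\gamma_{k-1})$ forces the second alternative $ILT(\gamma_{k-1})\ge R_1|\gamma_k|_\Gamma$.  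Since $ILT\le|\cdot|$ this telescopes to $|\gamma|_\Gamma\ge R_1^{n-1}|\gamma_{n-1}|_\Gamma$; the symmetric argument on the $\Gamma'$-side yields $|\gamma'|_{\Gamma'}\ge R_1^{n-1}|\gamma'_{n-1}|_{\Gamma'}$.

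\medskip

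\textbf{Crossing over via the homotopy inverse.}  Because $f$ and $h'f'h$ are homotopy inverses, iteration of $f'$ on $\gamma'=[h(\gamma)]$ corresponds, under the marking-change maps, to iteration of $f^{-1}$ on $\gamma$: the loop $\gamma'_{n-1}$ in $\Gamma'$ and the ``past'' loop $[f^{-(n-1)}(\gamma)]:=[(h'f'h)^{n-1}(\gamma)]$ in $\Gamma$ represent the same conjugacy class in $F_N$.  Invoking (\ref{bi-Lipschitz}) twice (once to replace $|\gamma'|_{\Gamma'}$ by $\le B\,|\gamma|_\Gamma$, and once to replace $|\gamma'_{n-1}|_{\Gamma'}$ by $\ge B^{-1}|[f^{-(n-1)}(\gamma)]|_\Gamma$), the two inequalities of the previous paragraph combine into
\[
|[f^{n-1}(\gamma)]|_\Gamma \;+\; |[f^{-(n-1)}(\gamma)]|_\Gamma \;\le\; \frac{(1+B^2)\,|\gamma|_\Gamma}{R_1^{n-1}}.
\]

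\medskip

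\textbf{Deriving the contradiction via uniform growth.}  Because $f$ and $f'$ both satisfy Convention \ref{conv-hyperbolic}, the outer automorphism $\varphi$ they jointly represent is hyperbolic, and the train-track structures of $f$ and $f'$ (via their Perron--Frobenius eigenvalues, together with bounded cancellation) produce constants $\lambda>1$ and $K>0$, depending only on $f,f'$, such that for every non-trivial loop $\gamma$ in $\Gamma$
\[
|[f^{n-1}(\gamma)]|_\Gamma \;+\; |[f^{-(n-1)}(\gamma)]|_\Gamma \;\ge\; \frac{\lambda^{n-1}}{K}\,|\gamma|_\Gamma.
\]
Combining the two displayed inequalities and dividing by $|\gamma|_\Gamma>0$ yields $(R_1\lambda)^{n-1}\le(1+B^2)\,K$.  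Choosing $R_1$ at the outset so that $R_1\lambda>1$, this bounds $n$ by the universal constant
\[
M \;:=\; 1 + \left\lceil\frac{\log\bigl((1+B^2)K\bigr)}{\log(R_1\lambda)}\right\rceil,
\]
which is the desired contradiction.

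\medskip

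\textbf{Main obstacle.}  The crucial non-routine ingredient is the uniform exponential lower bound on $|[f^{n-1}(\gamma)]|_\Gamma+|[f^{-(n-1)}(\gamma)]|_\Gamma$ in terms of $|\gamma|_\Gamma$, with constants independent of $\gamma$.  For fully irreducible $\varphi$ this is a direct Perron--Frobenius consequence, but in the present generality (merely hyperbolic $\varphi$, with train-track representatives for both $\varphi$ and $\varphi^{-1}$) it uses both $f$ and $f'$ simultaneously to handle loops that might otherwise linger on slow-growing invariant subgraphs.  One must also verify that the choice of $R_1$ with $R_1\lambda>1$ is compatible with the logarithmic dependence of $s_1$ on $R_1$ inside Proposition \ref{dichonew}, which is harmless since we are free to pass to higher common powers of $f$ and $f'$.
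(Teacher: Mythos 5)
Your proof takes a genuinely different route from the paper's, and it has a gap at its crucial step.

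The paper applies Proposition~\ref{dichonew} \emph{once}, with the specific choice $R_1 = 4C'B^2$. If alternative~(ii) holds for $\gamma$, then the chain
\[
|(f')^{s_1}(\gamma')|_{\Gamma'} \;\geq\; \tfrac{1}{B}\,ILT(\gamma'') \;\geq\; \tfrac{1}{B}R_1\,|\gamma|_\Gamma \;\geq\; 4C'\,|\gamma'|_{\Gamma'},
\]
combined with the observation that the number of bad edges in $(f')^{t}(\gamma')$ is at most $2C'\cdot ILT(\gamma') \leq 2C'\,|\gamma'|_{\Gamma'}$ for every $t\geq 0$, immediately yields $\g'\bigl((f')^{s_1}(\gamma')\bigr) \geq 1/2$, and Corollary~\ref{goodbig} finishes. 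There is no iteration of the dichotomy and no need for any global growth estimate; the argument is a short, self-contained count of illegal turns.

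You instead run the dichotomy $n-1$ times on each side of the bi-Lipschitz correspondence and close by invoking a uniform exponential lower bound
\[
|[f^{n-1}(\gamma)]|_\Gamma + |[f^{-(n-1)}(\gamma)]|_\Gamma \;\geq\; \frac{\lambda^{n-1}}{K}\,|\gamma|_\Gamma
\]
with $\lambda>1$, $K>0$ independent of $\gamma$ and $n$. Your telescoping, the cross-over via $h,h'$, and the bi-Lipschitz bookkeeping are all correct, and the power-passing you invoke to normalize $s_1=s_1'=1$ while keeping goodness monotone does go through. But the displayed flaring inequality is precisely the Bestvina--Feighn annuli-flare condition underlying Brinkmann's theorem~\cite{Brink} that an atoroidal automorphism has a word-hyperbolic mapping torus. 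You identify it as the ``main obstacle'' and then assert it follows ``via their Perron--Frobenius eigenvalues, together with bounded cancellation''; that is not an argument. The Perron--Frobenius data controls each exponential stratum separately, and the entire difficulty is to make the lower bound \emph{uniform} over all conjugacy classes, including those that linger in slow strata or shuttle cancellation between strata under $f$ and $f'$. Sections~\ref{train-tracks} and~\ref{symbolic} of the paper are designed precisely to avoid importing this word-hyperbolicity input and to derive everything needed from local train-track combinatorics, so leaving the flaring estimate unproven and uncited is a genuine gap. With a proper citation and an explicit reduction from hyperbolicity of $F_N\rtimes_\varphi\Z$ to the loop-level flaring inequality your approach would become a valid alternative, but a substantially heavier one than the paper's one-step argument.
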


\begin{proof} Let $B$ be a bi-Lipschitz constant for the transition from $\Gamma$ to $\Gamma'$ as in (\ref{bi-Lipschitz}) above.
Set $R_1=4C'B^{2}$, and apply Proposition \ref{dichonew} to the loop $\gamma$. 
Assume first that alternative (i) of this proposition holds.  Then Corollary 
\ref{goodbig}, applied to $f^{s_1}$, gives 
a bound $L \geq 0$ so that inequality (a) holds for all $n \geq L$ (after having replaced $f$ by $f^{s_1}$).

Now assume that for $\gamma$ alternative (ii) of Proposition \ref{dichonew} holds. Then we have the following inequalities, where $\gamma''$ denotes the reduced loop in $\Gamma$ corresponding to $f'^{s_1}(\gamma')$ (which implies 
$[f^{s_1}(\gamma'')] = \gamma$):
\begin{align*} 
|f'^{s_1}(\gamma')|_{\Gamma'}&
\ge\frac{1}{B}|\gamma''|_{\Gamma}
\ge\frac{1}{B} ILT(\gamma'')
\\
&\ge 
\frac{1}{B}R_1\cdot |\gamma|_{\Gamma}=4C'B\cdot |\gamma|_{\Gamma}\\
&= 4C'\cdot |\gamma'|_{\Gamma'}
\end{align*}
This however implies that $\g'(f'^{s_1}(\gamma'))\ge1/2$, since for any $t \geq 0$  the number of bad edges in $f'^{t}(\gamma')$ 
is bounded above by 
\[
2C' \cdot ILT(f'^t(\gamma')) \leq 2C' \cdot ILT(\gamma') \leq 2C'\cdot |\gamma'|_{\Gamma'}.
\] Thus, by invoking Corollary \ref{goodbig} again, there exist $L'$ such that \[
\g'(f'^{s_1L'}(\gamma'))\ge\delta.
\]
Hence for $M'=\max\{L,L'\}$ the conclusion of the Lemma follows. 
\end{proof}

\begin{rem}
\label{back-to-ht}
From the next section on we will restrict our attention to 
expanding train track maps which are homotopy equivalences. For any such train track representative $f$ of some $\phi \in \Out(\FN)$ and $f'$ of $\phi^{-1}$, which both satisfy Convention \ref{conv-hyperbolic}, the conditions stated before Proposition \ref{backforthgoodness} are satisfied, so that this proposition is valid for them.
\end{rem}

\section{Perron Frobenius Theorem for reducible substitutions and automorphisms}
\label{symbolic}

The 
purpose of this section is to define the \emph{simplex of attraction} $\Delta_{+}$ and the 
\emph{simplex of repulsion} $\Delta_{-}$ explicitly by exhibiting the coordinate values for the defining currents. 
For this purpose we crucially need the main result from our earlier paper \cite{LU1}; details of how to use this quote are given below.

\subsection{Symbolic dynamics, substitutions and quotes from \cite{LU1}}

In this subsection we 
recall some terminology, well known definitions and basic facts from symbolic dynamics. 

Given any finite set of symbols $A = \{a_1, \ldots, a_m\}$, called an {\em alphabet},
one considers the free monoid $A^*$ over $A$, which is given by all finite words in $A$ (without inverses), including the empty word $1$.  The \emph{length} $|w|$ of a word $w\in A^*$ denotes the number of letters in $w$. 
Furthermore one considers the {\em full shift} $A^\Z$, the set of all biinfinite words in $A$, which is canonically equipped with the product topology and with the shift-operator. The latter maps $\underbar x = \ldots x_{i-1} x_i x_{i+1} \ldots \in A^\Z$ to $\underbar y = \ldots y_{i-1} y_i y_{i+1} \ldots \in A^\Z$, with $y_i = x_{i+1} \in A$ for any $i \in \Z$.
A subset $\Sigma \subset A^\Z$ is called a {\em subshift} if it is 
non-empty, closed and shift-invariant. 
The \emph{language} 
of $\Sigma$ is the collection of all finite words occurring in elements of $\Sigma$. 
For any $w = x_1 \ldots x_n \in A^*$ we denote by ${\rm Cyl}_w$ the set of all $\underbar y = \ldots y_{i-1} y_i y_{i+1} \ldots \in A^\Z$ which satisfy $y_1 = x_1, y_2 = x_2, \ldots, y_n = x_n$. 

\smallskip

An {\em invariant measure} $\mu$ on a subshift $\Sigma$ is a finite Borel measure on $\Sigma$ which is invariant under the shift map. It extends canonically to an invariant measure on all of $A^\Z$ by setting $\mu(B) := \mu(B \cap \Sigma)$ for any measurable set $B \subset A^\Z$. For any invariant measure $\mu$ on $A^\Z$ obtained this way we say that $\mu$ has {\em support} in $\Sigma$.

\begin{rem}
\label{Kirch}
(1)
Any invariant measure $\mu$ defines a function 
$$\mu_A: A^* \to \R_{\geq 0}, \, w \mapsto \mu({\rm Cyl}_w)$$
which satisfies for any $w \in A^*$ the following {\em Kirchhoff conditions}:
$$\mu_A(w) = \sum_{x \in A} \mu_A(xw) = \sum_{x \in A} \mu_A(wx)$$ 
Conversely, for every such {\em weight function} $\mu^*: A^* \to \R_{\geq 0}$  (i.e. $\mu^*$ satisfies the Kirchhoff conditions) there exists a well defined invariant measure $\mu$ on $A^\Z$ with $\mu_A = \mu^*$, see \cite {FM}. 

\smallskip
\noindent
(2)
The measure $\mu$ has support in a subshift $\Sigma \subset A^\Z$ if and only if $\mu_A(w) = 0$ for any $w \in A^*$ with ${\rm Cyl}_w \cap \Sigma = \emptyset$.
\end{rem}

A \emph{substitution} $\zeta$ on the finite alphabet $A=\{a_1,\ldots a_m\}$ is 
a monoid morphism $\zeta: \A^* \to A^*$. It is well defined by knowing all $\zeta(a_i)$, and conversely, any map $A \to A^*$ defines a substitution. A substitution $\zeta:A\to A^*$ is called \emph{expanding} if for each $a_i\in A$, $|\zeta^k(a_i)|\ge2$ for some $k\ge1$. For any two words $w_1,w_2\in A^*$, let us denote the number of occurrences of the word $w_1$ in $w_2$ by $|w_2|_{w_1}$.  
By proving a generalized version of the classical Perron--Frobenius Theorem for reducible matrices, in \cite{LU1} we obtain the following result. 

%%%%%%%%%%%%%%%%%%%%%%%%%%%%%%%

\begin{prop}[Corollary 1.3 and Remark 3.14 of \cite{LU1}]
\label{invariant-measure1}
For any expanding substitution $\zeta: A^* \to A^*$ and any letter 
$a_i \in A$ there is a well defined invariant measure $\mu_{a_i}$ on $A^\Z$.
For any 
non-empty $w \in A^*$ and the associated 
cylinder ${\rm Cyl}_w$
the value of $\mu_{a_i}$
is
given, 
after possibly raising $\zeta$ to a suitable power (which does not depend on $w$), 
by the limit frequency 
\[
\mu_{a_i}({\rm Cyl}_w) = \lim_{t\to\infty}\frac{|\zeta^{t}(a_i)|_w}{|\zeta^{t}(a_i)|}.
\]
\end{prop}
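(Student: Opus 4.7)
The plan is to directly construct the asserted invariant measure from its ``would-be'' cylinder values, using the main theorem of \cite{LU1} to handle the existence of the limits, and then the correspondence of Remark \ref{Kirch}(1) to package these values into a measure on $A^\Z$.

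\textbf{Step 1 (Reduction to letter frequencies).} For a word $w \in A^*$ of length $k \geq 1$, I would reduce the counting of occurrences of $w$ in $\zeta^t(a_i)$ to counting occurrences of a single letter in an auxiliary substitution. Concretely, let $A^{(k)}$ be the alphabet whose letters are the length-$k$ subwords appearing in some $\zeta^t(a_j)$, and define on $A^{(k)}$ the induced substitution $\zeta^{(k)}$ as usual (by writing, for each $u\in A^{(k)}$, the image $\zeta(u)$ in terms of its length-$k$ subwords, while trimming the last $k-1$ letters to preserve the monoid morphism property). Then, up to an additive error of $k-1$ (coming from the boundary), the number $|\zeta^t(a_i)|_w$ equals the number of occurrences of $w$, viewed as a letter of $A^{(k)}$, in $(\zeta^{(k)})^t$ applied to any fixed length-$k$ prefix of an iterate of $a_i$. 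The substitution $\zeta^{(k)}$ is again expanding because $\zeta$ is.

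\textbf{Step 2 (Existence of the limit).} After replacing $\zeta$ by a power so that each diagonal block of the incidence matrix is primitive (and hence the same holds for the incidence matrices of all $\zeta^{(k)}$, since they inherit the irreducible block structure from $\zeta$), I would invoke Corollary 1.3 of \cite{LU1}, the generalized Perron--Frobenius theorem for reducible non-negative matrices. Applied to $\zeta^{(k)}$, it yields, for every letter $a_i \in A$, the existence of the limit frequency
\[
\mu_{a_i}^*(w) \; := \; \lim_{t\to\infty}\frac{|\zeta^{t}(a_i)|_w}{|\zeta^{t}(a_i)|},
\]
with the boundary correction of Step~1 washing out because $|\zeta^t(a_i)|\to\infty$. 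The power of $\zeta$ required does not depend on $w$: it only depends on the block structure of the incidence matrix of $\zeta$ itself.

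\textbf{Step 3 (Kirchhoff conditions and packaging).} For any $w \in A^*$, each occurrence of $w$ in $\zeta^t(a_i)$ that is not the initial subword extends uniquely to an occurrence of some $xw$ with $x \in A$, and conversely every occurrence of some $xw$ restricts to an occurrence of $w$; the analogous statement holds on the right. Hence
\[
\Big|\,|\zeta^t(a_i)|_w - \sum_{x \in A} |\zeta^t(a_i)|_{xw}\,\Big| \leq 1, \qquad \Big|\,|\zeta^t(a_i)|_w - \sum_{x \in A} |\zeta^t(a_i)|_{wx}\,\Big| \leq 1.
\]
Dividing by $|\zeta^t(a_i)|$ and passing to the limit shows that $\mu_{a_i}^*$ satisfies the Kirchhoff conditions of Remark \ref{Kirch}(1). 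That remark then produces a unique invariant Borel measure $\mu_{a_i}$ on $A^\Z$ with $\mu_{a_i}(\mathrm{Cyl}_w) = \mu_{a_i}^*(w)$ for every $w \in A^*$, which is the measure asserted by the proposition.

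\textbf{Main obstacle.} The nontrivial content is entirely in Step~2: for a \emph{primitive} substitution, the existence of the frequencies follows from classical Perron--Frobenius applied to the (augmented) incidence matrix, but the present setting allows $\zeta$ to be fully reducible, in which case different components of the incidence matrix may have distinct Perron eigenvalues and the convergence of the ratios $|\zeta^t(a_i)|_w/|\zeta^t(a_i)|$ is not at all automatic. Controlling how the contribution of smaller-eigenvalue blocks decays relative to the dominant growth rate, uniformly in the starting letter $a_i$, is precisely the content of the generalized Perron--Frobenius theorem proved in \cite{LU1}, which is why that paper is the essential input.
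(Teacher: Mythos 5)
The Proposition you are proving is not proved in the paper at all: it is a verbatim citation from \cite{LU1} (Corollary 1.3 and Remark 3.14), with Remark~\ref{transition} recording the cosmetic translation between measures on the substitution subshift $\Sigma_\zeta$ and their canonical extensions to $A^\Z$. So there is no internal proof to compare against; the paper's ``proof'' is the reference.

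Your proposal is therefore not a reconstruction of the paper's argument but an attempt to derive the general-word statement from the letter-frequency special case via the higher-block substitution $\zeta^{(k)}$. As written it has a genuine gap. The object to which Corollary~1.3 of \cite{LU1} applies is an iterate $(\zeta^{(k)})^t$ of a single letter of $A^{(k)}$, but what you actually need to control is $|\zeta^t(a_i)|_w$ for $a_i\in A$, and your Step~1 instead applies $(\zeta^{(k)})^t$ to ``any fixed length-$k$ prefix of an iterate of $a_i$''. That prefix is an element of $A^{(k)}$ only if you fix the iterate once and for all, and even then you must reconcile the denominator $|\zeta^t(a_i)|$ with $|(\zeta^{(k)})^{t-t_0}(\text{prefix})|$ and verify that the ratio of these tends to~$1$ --- this is exactly where the reducibility of $\zeta$ bites, since different letters and different blocks of $\zeta$ can have different growth rates, and it is not spelled out. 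In addition, the claim in Step~2 that the incidence matrix of $\zeta^{(k)}$ ``inherits the irreducible block structure from $\zeta$'' is asserted but not justified, and for non-primitive substitutions the block structure of the higher-block system can change. These are precisely the delicacies that Remark~3.14 of \cite{LU1} handles, and that remark is part of what the Proposition is explicitly quoting; by invoking only Corollary~1.3 and trying to re-derive the extension to general words, you are essentially trying to reprove Remark~3.14, and the reduction as sketched does not close the gap.
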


\begin{rem}
\label{transition}
(1) We note that in \cite{LU1} the measure $\mu_{a_i}$ is defined on the substitution subshift $\Sigma_\zeta$, while in the above quote we denote by $\mu_{a_i}$ the canonical extension to all of $A^\Z$. Accordingly, in \cite{LU1} the set ${\rm Cyl}_w$ 
denotes 
set ${\rm Cyl}_w \cap \Sigma_\zeta$.
However, 
as explained in Remark 3.13 of \cite{LU1}, these differences are only superficial: the statement quoted above and the one in \cite{LU1} are indeed equivalent.

\noindent
(2)
It can be derived from \cite{LU1} that the positive power to which $\zeta$ has to be raised in order to get the frequency equality in Proposition \ref{invariant-measure1} is not just independent of $w$ but actually independent of $\zeta$. It only depends on the size of the alphabet $A$.
\end{rem}

In the next subsection we also need the following:

\begin{lem}
[Remark 3.3 of 
\cite{LU1}]
\label{eigenvalue-}
Let $\zeta: A^* \to A^*$ be  an expanding substitution. Then (up to replacing $\zeta$ by a positive power) there exists a constant $\lambda_{a_i} >1$ for each $a_i \in A$ such that:
$$\lim_{t \to \infty} \frac{|\zeta^{t+1}(a_i)|}{|\zeta^{t}(a_i)|} = \lambda_{a_i}$$
\qed
\end{lem}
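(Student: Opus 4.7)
My approach is to reduce the statement to a Perron--Frobenius type asymptotic for the incidence matrix of $\zeta$. Associate to $\zeta$ the non-negative integer transition matrix $M \in \Z_{\geq 0}^{A \times A}$ with entries $M_{ab} := |\zeta(b)|_a$. Then $|\zeta^t(b)|_a = (M^t)_{ab}$ and hence
\[
|\zeta^t(b)| \;=\; \sum_{a \in A} (M^t)_{ab},
\]
so the claim becomes the statement that, up to passing to a power of $M$, the ratio of successive row-column sums of $M^t$ converges to a constant $\lambda_b > 1$ depending only on $b$.

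The plan is to decompose $A$ into the strongly connected components $A_1, \ldots, A_r$ of the directed graph with an edge $b \to a$ whenever $M_{ab} > 0$, and then write $M$ in block upper-triangular form along this decomposition. Each diagonal block $M_{jj}$ is either irreducible, so that the classical Perron--Frobenius theorem attaches to it a dominant real eigenvalue $\lambda_j \geq 0$, or a single entry. Since $\zeta$ is expanding, for every $b \in A$ there is some iterate of length $\geq 2$, and hence at least one block $A_j$ reachable from $b$ with $\lambda_j > 1$. Replacing $\zeta$ by a positive power (where the power is chosen uniformly, since $A$ is finite) we may assume moreover that each irreducible diagonal block is primitive, i.e.\ aperiodic, so that $M_{jj}^t \sim \lambda_j^t\, v_j w_j^{\mathsf T}$ for strictly positive Perron vectors $v_j$, $w_j$.

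Given this, I would then carry out a block-triangular Jordan-type analysis of $(M^t)_{ab}$. A standard argument (chasing how contributions from blocks reachable from $b$ flow into row $a$ through the strictly upper-triangular off-diagonal part of $M$) shows that
\[
\sum_{a \in A} (M^t)_{ab} \;=\; t^{d_b}\, \lambda_b^t\, \bigl(c_b + o(1)\bigr),
\]
where $\lambda_b := \max\{\lambda_j : A_j \text{ reachable from } b\}$, $d_b \geq 0$ is the length of the longest chain of reachable blocks all realizing this maximum, and $c_b > 0$. Taking the quotient of this expansion at time $t+1$ and at time $t$ gives $|\zeta^{t+1}(b)|/|\zeta^t(b)| \to \lambda_b$, and expanding forces $\lambda_b > 1$.

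The main obstacle is the step from the classical Perron--Frobenius theorem for irreducible matrices to a genuine \emph{limit} (not merely $\liminf$ and $\limsup$) for the reducible matrix $M$. Two distinct mechanisms can prevent convergence: periodicity inside individual irreducible blocks, and oscillatory interference between two reachable blocks sharing the same dominant eigenvalue but of different Jordan depth. The first is handled precisely by passing to a power of $\zeta$ (making every diagonal block primitive, which amounts to the main technical input isolated in \cite{LU1}); the second is handled by the fact that, in the asymptotic $t^{d_b}\lambda_b^t$, the integer $d_b$ is maximal, so polynomially smaller terms from shallower chains are absorbed into the $o(1)$. Passing to a power of $\zeta$ that is large enough to simultaneously address all $b \in A$ (possible since $A$ is finite) yields the desired clean limit statement, and this is precisely the content of Remark 3.3 of \cite{LU1}.
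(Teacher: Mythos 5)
Your overall strategy---passing to the transition matrix $M$ with $M_{ab}=|\zeta(b)|_a$, decomposing the alphabet into strongly connected components, raising $\zeta$ to a power so that every irreducible diagonal block becomes primitive, and then invoking the generalized Perron--Frobenius asymptotic $\sum_a (M^t)_{ab} = t^{d_b}\lambda_b^t\bigl(c_b+o(1)\bigr)$ to deduce the limit ratio---is exactly the reducible Perron--Frobenius analysis carried out in \cite{LU1}. The paper itself gives no independent proof here; it simply quotes Remark~3.3 of that reference, so your route is essentially the same one the paper relies on.

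The one step that needs tightening is the deduction $\lambda_b>1$. As written, ``for every $b\in A$ there is some iterate of length $\geq 2$, and hence at least one block $A_j$ reachable from $b$ with $\lambda_j>1$'' reads as a letterwise implication, and that implication is false in isolation: for $\zeta(b)=bc$, $\zeta(c)=c$ one has $|\zeta(b)|=2$ while every block reachable from $b$ has spectral radius exactly $1$. What makes the conclusion true is that \emph{every} letter is required to expand. After primitivizing, a sink block of spectral radius $\leq 1$ is forced to be a $1\times 1$ block whose letter $c$ satisfies $\zeta(c)=c$ (an irreducible primitive nonnegative integer matrix has no zero rows, so its spectral radius is at least its minimal row sum; spectral radius $1$ then forces all row sums equal to $1$, hence a permutation matrix, hence---by primitivity---a $1\times 1$ identity), and then $|\zeta^t(c)|=1$ for all $t$, contradicting that $c$ expands. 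Since every block reachable from $b$ leads down to some sink and all sinks now have spectral radius $>1$, one gets $\lambda_b>1$. Once this global argument replaces the local one, the outline is sound; the remaining heavy lifting (the $t^{d_b}\lambda_b^t$ asymptotic with the correct polynomial exponent) is precisely the content of \cite{LU1}.
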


\subsection{Train track maps reinterpreted as substitutions}

We now apply the material from the previous subsection to a particular substitution $\zeta_f$ which we construct from an expanding train track map $f: \Gamma \to \Gamma$ 
as in the section \ref{train-tracks}.  We first set up the following general definitions:

For any graph $\Gamma$ we recall (see section \ref{graphs}) that the set $E\Gamma$ of oriented edges of $\Gamma$ is equipped with a fixed point free involution 
\[
\iota: E\Gamma \to E\Gamma, \,\, e \mapsto \bar e
\]
where 
$\bar e$ 
denotes as before the edge $e$ with reversed orientation.

We now use $E\Gamma$ as alphabet, and consider the free monoid $E\Gamma^*$ and the 
both-sided full shift $E \Gamma^\Z$. The latter contains the subshift (of ``finite type'') $\Sigma_\Gamma \subset E \Gamma^\Z$, which consists of 
all biinfinite reduced edge paths in $\Gamma$. We denote the language of $\Sigma_\Gamma$ by 
$\cal P(\Gamma)$, it is the set  
of all finite reduced edge paths in $\Gamma$.
We observe that the involution $\iota$ induces an involution on $E \Gamma^*$ 
which for convenience will be denoted also by $\iota$.
On $E\Gamma^\Z$ this involution induces the {\em flip involution} which is given by 
$$ \iota( \ldots x_i x_{i+1} x_{i+2} \ldots) =  \ldots y_i y_{i+1} y_{i+2} \ldots$$
with $y_n = \bar x_{-n+1}$ for all $n \in \Z$. The subshift $\Sigma_\Gamma$ is clearly flip-invariant.

A shift-invariant measure $\mu^*$ on $\Sigma_\Gamma$ is called {\em flip-invariant} if for any measurable subset $\Sigma' \subset \Sigma_\Gamma$ one has $\mu^*(\iota(\Sigma')) = \mu^*(\Sigma')$. 
From Remark \ref{Kirch} we see that this is equivalent to requiring that the weight function $\mu^*_{E\Gamma}$ associated to $\mu^*$ satisfies
$$\mu^*_{E\Gamma}(\bar \gamma) = \mu^*_{E\Gamma}(\gamma)$$
for any $\gamma \in \cal P(\Gamma)$. 
We now observe:

\begin{lem}
\label{measure-current}
Let $\Gamma$ be a graph provided with a marking $\theta: \FN \to \pi_1\Gamma$. Then any shift-invariant and flip-invariant measure $\mu^*$ on $\Sigma_\Gamma$ defines a current $\mu$ on $\FN$ and conversely. This canonical 1-1 correspondence is given by the fact that the 
Kolmogorov function $\mu_{E\Gamma}$ associated to $\mu$ 
(see section \ref{sec:currents}) and the weight function $\mu^*_{E\Gamma}$ associated to $\mu^*$ satisfy
$$\mu_{E\Gamma}(\gamma) = \mu^*_{E\Gamma}(\gamma)$$
for any finite reduced path $\gamma$ in $\Gamma$.
\end{lem}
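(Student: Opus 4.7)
Plan for the proof of Lemma \ref{measure-current}.

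The central observation is that both sides of the asserted bijection are, by results already cited in the excerpt, uniquely parametrized by real-valued functions on the set $\cal P(\Gamma)$ of finite reduced edge paths in $\Gamma$: currents on $F_N$ are in bijection with Kolmogorov functions (the result following Proposition \ref{current-space}, due to Kapovich), and shift-invariant measures on $E\Gamma^\Z$ are in bijection with weight functions on $E\Gamma^*$ satisfying the Kirchhoff conditions (Remark \ref{Kirch}). Thus the strategy is to match these two parametrizations, so that the proof becomes a matter of recognizing that conditions (1) and (2) for a Kolmogorov function are equivalent to Kirchhoff plus flip-invariance, under the restriction that the measure is supported on $\Sigma_\Gamma$.

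In detail, I would proceed as follows. First, given a shift-invariant, flip-invariant measure $\mu^*$ on $\Sigma_\Gamma$, I extend it by zero to a shift-invariant measure on the full shift $E\Gamma^\Z$ as indicated in the text, and let $\mu^*_{E\Gamma}: E\Gamma^* \to \R_{\geq 0}$ be its associated weight function. Since ${\rm Cyl}_w \cap \Sigma_\Gamma = \emptyset$ whenever $w$ is not a reduced edge path, $\mu^*_{E\Gamma}$ vanishes off $\cal P(\Gamma)$ by part (2) of Remark \ref{Kirch}. I then restrict $\mu^*_{E\Gamma}$ to $\cal P(\Gamma)$ and verify the two Kolmogorov conditions: property (2) is exactly the translation of flip-invariance (using the description of the flip involution on $E\Gamma^\Z$ together with shift-invariance, which together give $\mu^*_{E\Gamma}(\bar w) = \mu^*_{E\Gamma}(w)$); property (1) is obtained from the Kirchhoff identity $\mu^*_{E\Gamma}(w) = \sum_{x \in E\Gamma} \mu^*_{E\Gamma}(xw) = \sum_{x \in E\Gamma} \mu^*_{E\Gamma}(wx)$ by observing that the only $x \in E\Gamma$ for which $xw$ (respectively $wx$) lies in $\cal P(\Gamma)$ are exactly those that produce elements of $E_+(w)$ (resp.\ $E_-(w)$), all other summands being zero. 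The resulting function on $\cal P(\Gamma)$ is then, by the Kolmogorov-function result quoted in the text, the Kolmogorov function $\mu_{E\Gamma}$ of a unique current $\mu$ on $F_N$.

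Conversely, starting from a current $\mu$, I take its Kolmogorov function $\mu_{E\Gamma}: \cal P(\Gamma) \to \R_{\geq 0}$ and extend it by zero to a weight function on $E\Gamma^*$. I check that this extension satisfies Kirchhoff on all of $E\Gamma^*$: if $w$ is a reduced path, this reduces to property (1) as above; if $w$ is not reduced or not an actual path, then $xw$ (resp.\ $wx$) is still non-reduced or not a path for every $x \in E\Gamma$, so both sides of Kirchhoff vanish. By Remark \ref{Kirch} the weight function comes from a unique shift-invariant measure on $E\Gamma^\Z$, which is supported on $\Sigma_\Gamma$ because it vanishes on cylinders of non-paths, and its flip-invariance follows directly from property (2) of $\mu_{E\Gamma}$. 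The two constructions are inverse to each other because they induce the same function on $\cal P(\Gamma)$, and uniqueness on both sides of the correspondence (Kolmogorov on one side, Kirchhoff on the other) propagates this equality back to the measures and currents.

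The only genuinely delicate bookkeeping, which I expect to be the main (though modest) obstacle, is in matching the Kirchhoff condition on $E\Gamma^*$ with Kolmogorov property (1) on $\cal P(\Gamma)$: one has to keep careful track of which one-letter extensions of a reduced path $w$ stay reduced paths, and verify that shift-invariance on \emph{both} sides of a word corresponds exactly to the two sums in property (1). The flip-invariance comparison is straightforward once one unravels the definition of the flip involution on $E\Gamma^\Z$, using shift-invariance to align indices so that $\mu^*({\rm Cyl}_{\bar w}) = \mu^*(\iota({\rm Cyl}_w))$.
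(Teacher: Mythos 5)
Your proposal is correct and follows essentially the same route as the paper's own proof: both use the known bijections (currents $\leftrightarrow$ Kolmogorov functions, shift-invariant measures $\leftrightarrow$ Kirchhoff weight functions) and then match the two parametrizations by observing that a flip-invariant weight function vanishing off $\cal P(\Gamma)$ restricts to a Kolmogorov function, and vice versa. Your version simply spells out the Kirchhoff/Kolmogorov and flip-invariance bookkeeping in more detail than the paper does.
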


\begin{proof}
It is well known that a current and the associated Kolmogorov function determine each other vice versa (see section \ref{sec:currents}), and similarly for  a shift-invariant measure and its weight function (see Remark \ref{Kirch}).
Hence the statement of the lemma is a direct consequence of the observation that any flip-invariant weight function on $E\Gamma^*$, which is zero on $E\Gamma^* \smallsetminus \cal P(\Gamma)$, restricts on $\cal P(\Gamma)$ to a Kolmogorov function, and conversely, any Kolmogorov function on $\cal P(\Gamma)$ gives canonically rise to a flip-invariant weight function on $E\Gamma$ by setting it equal to $0$ for any element of on $E\Gamma^* \smallsetminus \cal P(\Gamma)$.
\end{proof}

\medskip

We now consider the occurrences of a path $\gamma$ as subpath in a path $\gamma'$. As before, 
the number of such occurrences is denoted by $|\gamma'|_\gamma$. We denote the number of occurrences of $\gamma$ or of $\bar \gamma$ as subpath in a path $\gamma'$ by $\langle \gamma, \gamma' \rangle$ and obtain:
\begin{equation}
\label{occurrences}
\langle \gamma, \gamma' \rangle = |\gamma'|_\gamma + |\gamma'|_{\bar \gamma}
\end{equation}

\medskip

Let now $f: \Gamma \to \Gamma$ be any graph self-map. Then $f$ induces a substitution $\zeta_f: E\Gamma^* \to E\Gamma^*$, but in general $\zeta_f$-iterates of reduced paths in $\Gamma$ will be mapped to non-reduced paths. An exception is given if $f$ is a train track map and if the path $\gamma'$ is legal: In this case all $f^t(\gamma')$ will be reduced as well: 
$$[f^t(\gamma')] = f^t(\gamma') \,$$
where $[\rho]$ denotes as in section \ref{graphs} the path obtained from an edge path $\rho$ via reduction relative to its endpoints.
Hence the occurrences of any path $\gamma$ or of $\bar \gamma$ in $[f^t(\gamma')]$ are given by 
\begin{equation}
\label{occurrences+}
\langle \gamma, [f^t(\gamma')] \rangle = |f^t(\gamma')|_\gamma + |f^t(\gamma')|_{\bar \gamma}
\end{equation}
for any integer $t \geq 0$.

We are now ready to prove:

\begin{prop} 
\label{edgeconvergence} 
Let 
$f:\Gamma\to\Gamma$ be
an expanding 
train-track map,
and
let $e\in E\Gamma$. 
Then, 
after possibly replacing $f$ by a 
positive power, 
for any reduced edge path $\gamma$ in $\Gamma$ 
the limit
\[
\lim_{n\to\infty}\frac{\langle\gamma,f^{t}(e)\rangle}{|f^{t}(e)|} = a_{\gamma}
\]
exists, 
and
the set of these limit values 
defines a unique geodesic current $\mu_{+}(e)$ on $F_N$
through setting $\langle \gamma, \mu_+(e) \rangle = a_\gamma$ for any $\gamma \in \cal P(\Gamma)$. 
\end{prop}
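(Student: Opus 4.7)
The plan is to reinterpret the train track map $f$ as a substitution on the edge alphabet, apply Proposition \ref{invariant-measure1} to produce an invariant measure, and then repackage it as a current via Lemma \ref{measure-current}. Concretely, $f$ induces an expanding substitution $\zeta_f: E\Gamma^{*} \to E\Gamma^{*}$; since $e$ is a single edge, hence a legal path, and $f$ is a train track map, every iterate $f^{t}(e)=\zeta_f^{t}(e)$ is a reduced legal edge path. In particular $|\zeta_f^{t}(e)|_w = 0$ for all $t \geq 0$ whenever the word $w \in E\Gamma^{*}$ contains a backtracking subword. Applying Proposition \ref{invariant-measure1} (with the power of $\zeta_f$ needed depending only on $|E\Gamma|$ by Remark \ref{transition}(2)), I obtain, after replacing $f$ by that power, a shift-invariant measure $\mu_e^{*}$ on $E\Gamma^{\Z}$ whose cylinder values are the limit frequencies
\[
\mu_e^{*}({\rm Cyl}_w) \;=\; \lim_{t\to\infty} \frac{|f^{t}(e)|_w}{|f^{t}(e)|},
\]
and the observation above shows that $\mu_e^{*}$ vanishes on any cylinder of a non-reduced word; in particular $\mu_e^{*}$ is supported on the subshift $\Sigma_\Gamma$ of reduced biinfinite paths.

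For each reduced path $\gamma \in \cal P(\Gamma)$ I would then set $a_\gamma := \mu_e^{*}({\rm Cyl}_\gamma) + \mu_e^{*}({\rm Cyl}_{\bar\gamma})$. Since both individual frequency limits now exist, formula (\ref{occurrences+}), applied to the legal path $\gamma' = e$, yields
\[
\lim_{t \to \infty} \frac{\langle \gamma, f^{t}(e) \rangle}{|f^{t}(e)|} \;=\; \lim_{t \to \infty} \frac{|f^{t}(e)|_\gamma + |f^{t}(e)|_{\bar\gamma}}{|f^{t}(e)|} \;=\; a_\gamma,
\]
establishing the existence of the limit claimed in the proposition. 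It then suffices, by Lemma \ref{measure-current}, to check that $\gamma \mapsto a_\gamma$ is a Kolmogorov function on $\cal P(\Gamma)$. Flip-invariance $a_{\bar\gamma} = a_\gamma$ is immediate from the definition. For the splitting identity $a_\gamma = \sum_{\gamma' \in E_+(\gamma)} a_{\gamma'}$, I would split the right-hand side into its two $\mu_e^{*}$-contributions: the Kirchhoff condition for $\mu_e^{*}$ gives $\mu_e^{*}({\rm Cyl}_\gamma) = \sum_{x \in E\Gamma} \mu_e^{*}({\rm Cyl}_{\gamma x})$, and the single omitted letter $x = \bar{e_n}$ (where $e_n$ is the last edge of $\gamma$) contributes zero because $\gamma x$ is non-reduced; after substituting $y = \bar x$, the symmetric application of Kirchhoff to $\bar\gamma$ gives $\mu_e^{*}({\rm Cyl}_{\bar\gamma}) = \sum_{x \neq \bar{e_n}} \mu_e^{*}({\rm Cyl}_{\bar x \bar\gamma})$. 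Summing the two identities recovers $a_\gamma$.

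The main technical point is precisely the bookkeeping in this last step, namely transporting the general Kirchhoff identity for $\mu_e^{*}$ on $E\Gamma^{*}$ (which sums over arbitrary one-letter extensions) into the Kolmogorov identity for $a_\gamma$ on $\cal P(\Gamma)$ (which restricts to reduced one-letter right-extensions of $\gamma$). This reduction goes through smoothly because $\mu_e^{*}$ is supported on $\Sigma_\Gamma$, so every ``forbidden'' extension automatically carries zero mass. Once these verifications are in place, Lemma \ref{measure-current} produces a unique current $\mu_+(e)$ on $F_N$ with $\langle \gamma, \mu_+(e)\rangle = a_\gamma$ for every $\gamma \in \cal P(\Gamma)$, as required.
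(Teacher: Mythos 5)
Your proposal follows the same route as the paper's proof: reinterpret $f$ as the expanding substitution $\zeta_f$ on $E\Gamma^*$, invoke Proposition~\ref{invariant-measure1} to extract an invariant measure $\mu^*_e$ whose cylinder values are the limit frequencies, observe that legality of $f^t(e)$ forces $\mu^*_e$ to vanish outside $\Sigma_\Gamma$, symmetrize to obtain a flip-invariant weight function supported in $\cal P(\Gamma)$, and then appeal to Lemma~\ref{measure-current}. The only surface difference is that you spell out the Kirchhoff-to-Kolmogorov bookkeeping explicitly (the cancellation of the one non-reduced extension $x=\bar e_n$), whereas the paper disposes of it in a sentence by noting that a sum of weight functions supported in $\cal P(\Gamma)$ is again such and hence restricts to a Kolmogorov function; both are correct and essentially the same argument.
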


\begin{proof}
Since every edge $e$ of $\Gamma$ is legal, we obtain for any $\gamma \in \cal P(\Gamma)$ from
equation (\ref{occurrences+}) above that  
$\langle \gamma, f^t(e) \rangle = \langle \gamma, [f^t(e)] \rangle = |f^t(e)|_\gamma + |f^t(e)|_{\bar \gamma}$ for all exponents $t \geq 0$. 
On the other hand, for any element $\omega \in E\Gamma^* \smallsetminus \cal P(\Gamma)$ one has $|f^t(e)|_\omega + |f^t(e)|_{\bar \omega} = 0$.

Furthermore the assumption that $f$ is expanding implies directly that the substitution $\zeta_f$ associated to $f$ is also expanding.

Hence we can apply Proposition \ref{invariant-measure1} to the substitution $\zeta_f$ to obtain that, 
after possibly replacing $f$ and $\zeta_f$ by a suitable positive power, 
the limit of the $\frac{\langle\gamma,f^{t}(e)\rangle}{|f^{t}(e)|}$ exists indeed, and that it is equal to $\mu^*_{e}({\rm Cyl}_\gamma) + \mu^*_{e}({\rm Cyl}_{\bar \gamma})$, where $\mu^*_e$ is a shift-invariant measure on $E\Gamma^*$ with support in
the subshift 
$\Sigma_\Gamma$. 
Hence $\gamma \mapsto \mu^*_{e}({\rm Cyl}_\gamma)$ defines a weight function on $E\Gamma^*$ 
which is zero outside $\cal P(\Gamma)$, and thus 
similarly 
$\gamma \mapsto \mu^*_{e}({\rm Cyl}_{\bar \gamma})$. Their sum is thus also a weight function
with support in $\cal P(\Gamma)$, 
and furthermore it is flip invariant, so that we obtain from Lemma \ref{measure-current} a Kolmogorov function. This in turn defines the current $\mu_+(e)$, as stated in our proposition.
\end{proof}

We now want to show that the currents $\mu_+(e)$ are projectively $\phi$-invariant. For this purpose we start by stating two lemmas; the first one is an elementary exercise:

\begin{lem}
\label{closing-up}
For any graph $\Gamma$ 
without valence 1 vertices
there exists a constant $K \geq 0$ such that for any finite reduced edge path $\gamma$ in $\Gamma$ there exists an edge path $\gamma'$ of length $|\gamma'| \leq K$ such that the concatenation $\gamma \circ \gamma'$ exists and is a reduced loop.
\qed
\end{lem}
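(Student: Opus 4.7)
For each triple $(v,e,w)$ consisting of a vertex $v$ of $\Gamma$, an oriented edge $e$ with $t(e)=v$, and a vertex $w$, we will exhibit a reduced edge path $p_{v,e,w}$ from $v$ to $w$ whose first edge (if any) differs from $\bar e$. Setting $K := \max_{(v,e,w)} |p_{v,e,w}|$, which is finite since the index set is finite, the lemma follows: given a reduced path $\gamma = e_1 \cdots e_n$ from $u$ to $v$, take $\gamma' := p_{v,e_n,u}$ (or the trivial path at $u = v$ if $n = 0$). The resulting $\gamma\circ\gamma'$ is a loop at $u$, and it is reduced because the only potential reduction point -- the junction between $e_n$ and the first edge of $\gamma'$ -- is non-backtracking by the defining property of $p_{v,e,w}$.

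\textbf{Construction of $p_{v,e,w}$.} If the underlying unoriented edge of $\bar e$ is not a bridge of $\Gamma$, or if it is a bridge but $w$ lies in the same component of $\Gamma \smallsetminus \{\bar e\}$ as $v$, then $\Gamma \smallsetminus \{\bar e\}$ contains a path from $v$ to $w$; any shortest such path is reduced, uses no copy of $\bar e$, and in particular does not begin with $\bar e$, so we take it as $p_{v,e,w}$. Otherwise $\bar e$ is a bridge and $w$ lies in the $o(e)$-side component $B$ of $\Gamma\smallsetminus\{\bar e\}$; let $A$ denote the $v$-side component. We claim $A$ contains a reduced loop at $v$. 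Indeed, $A$ cannot be a tree: any leaf of $A$ distinct from $v$ would retain the same valence in $\Gamma$ -- since only $v$ can pick up the extra valence coming from the bridge $\bar e$ -- contradicting the no-valence-$1$ hypothesis; and if $A$ consisted of $v$ alone, then $v$ itself would have valence $1$ in $\Gamma$. Hence $\pi_1(A,v)$ is nontrivial, so choose a shortest reduced loop $\ell$ at $v$ in $A$. Letting $q$ be a shortest reduced path from $o(e)$ to $w$ in $B$, set
\[
p_{v,e,w} \;:=\; \ell \circ \bar e \circ q.
\]
The two concatenation junctions are non-backtracking, since the last edge of $\ell$ lies in $A$ (hence differs from $e$) and the first edge of $q$ lies in $B$ (hence also differs from $e$); and the first edge of $p_{v,e,w}$ equals that of $\ell$, which lies in $A$ and so differs from $\bar e$. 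The length is bounded by a constant depending only on $\Gamma$.

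\textbf{Main obstacle.} The only non-routine step is the production of the reduced loop $\ell$ in Case 2, which amounts to observing that the no-valence-$1$ hypothesis forbids the $v$-side component $A$ from being a tree. Everything else -- checking reducedness at the junctions of $\gamma\circ\gamma'$ and of $\ell\circ\bar e\circ q$, and extracting the uniform bound $K$ from a maximum over finitely many triples -- is straightforward bookkeeping.
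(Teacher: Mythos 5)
The paper supplies no proof for this lemma; it is labeled ``an elementary exercise'' and closed with \verb|\qed|. Your argument is correct and complete: reducing to finitely many triples $(v,e,w)$, splitting on whether the underlying edge of $e$ is a bridge, and -- in the bridge case -- using the no-valence-$1$ hypothesis to find a non-tree component and hence a reduced loop $\ell$ to ``turn around'' through $\bar e$, is a clean way to produce the required connector with first edge distinct from $\bar e$. One small remark: the conclusion you obtain is a reduced (not necessarily cyclically reduced) loop, which is exactly what the lemma asserts and what the paper later uses, since in Proposition 5.11 the bounded discrepancy between $|\gamma_t|$ and the cyclically reduced length of $[w_t]$ washes out in the limit $t\to\infty$ anyway. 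Also note that the construction implicitly uses connectedness of $\Gamma$ (standing throughout the paper via Convention 4.1), both to guarantee the existence of some path from $v$ to $w$ and to make the maximum defining $K$ run over a set where all $p_{v,e,w}$ exist; it would be worth stating this hypothesis.
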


\begin{lem}
\label{K-approximation}
Let $f: \Gamma \to \Gamma$ as in Proposition \ref{edgeconvergence}, and let $K_1 \geq 0$ be any constant.
For all 
integers $t \geq 0$ let $\gamma'_t \in E\Gamma^*$ be any element with $|\gamma'_t| \leq K_1$. Set $\gamma_t := f^t(e)^* \gamma'_t$, where $f^t(e)^*$ is obtained from $f^t(e)$ by erasing an initial and a terminal subpath of length at most $K_1$. Then
for any reduced path $\gamma$ in $\Gamma$ one has 
$$\underset{t \to \infty}{\lim} \frac{\langle \gamma, \gamma_t  \rangle}{|\gamma_t|} = \langle \gamma, \mu_+(e) \rangle$$
\end{lem}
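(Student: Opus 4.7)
The plan is to deduce Lemma \ref{K-approximation} directly from Proposition \ref{edgeconvergence} by an elementary approximation argument. The key observation is that $\gamma_t$ and $f^t(e)$ differ only by at most $3K_1$ many letters at the two ends, whereas $|f^t(e)|$ grows exponentially. Thus the finite discrepancy is negligible in the limiting ratio, and both sequences of ratios must converge to the same value $\langle \gamma, \mu_+(e)\rangle$.

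Carrying this out, I would first compare denominators. Since $f^t(e)^*$ is obtained from $f^t(e)$ by erasing at most $K_1$ edges at each end and $|\gamma'_t| \leq K_1$, we have
\[
|f^t(e)| - 2K_1 \;\leq\; |\gamma_t| \;\leq\; |f^t(e)| + K_1 .
\]
Next I would compare numerators. For any factorization $w = \alpha\beta$ of a word in $E\Gamma^*$, the number of occurrences of $\gamma$ or $\bar\gamma$ that straddle the cut is at most $2(|\gamma|-1)$, so
\[
\bigl|\langle \gamma, w\rangle - \langle \gamma, \alpha\rangle - \langle \gamma, \beta\rangle\bigr| \;\leq\; 2(|\gamma|-1) .
\]
Applying this to the factorization $f^t(e) = (\mathrm{prefix}) \cdot f^t(e)^* \cdot (\mathrm{suffix})$ and to $\gamma_t = f^t(e)^* \cdot \gamma'_t$, and using the trivial bound $\langle \gamma, w\rangle \leq |w|$ on any word of length $\leq K_1$, one obtains a constant $D_{\gamma} = D_{\gamma}(K_1, |\gamma|)$, independent of $t$, such that
\[
\bigl|\langle \gamma, \gamma_t\rangle - \langle \gamma, f^t(e)\rangle\bigr| \;\leq\; D_{\gamma} .
\]

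Finally, the map $f$ for which $\mu_+(e)$ is defined in Proposition \ref{edgeconvergence} satisfies Convention \ref{tt-map-convention}, so $|f^t(e)| \geq (\lambda')^t \to \infty$. Combining the numerator and denominator bounds,
\[
\frac{\langle \gamma, \gamma_t\rangle}{|\gamma_t|} \;=\; \frac{\langle \gamma, f^t(e)\rangle + O(1)}{|f^t(e)| + O(1)} \;\longrightarrow\; \langle \gamma, \mu_+(e)\rangle
\]
by Proposition \ref{edgeconvergence}, which is the claim. No serious obstacle arises: this is purely an approximation lemma designed to transfer the limit frequency established for the pure iterates $f^t(e)$ to arbitrary words that agree with $f^t(e)$ up to an $O(1)$ modification at the endpoints; its only role is to make the convergence robust enough to be applied later to truncations and closures of cylinder words.
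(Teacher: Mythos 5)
Your proof is correct and follows essentially the same strategy as the paper: the $O(1)$ truncation and extension at the ends of $f^t(e)$ are swamped by the exponential growth of $|f^t(e)|$, so the limit frequency transfers directly from Proposition \ref{edgeconvergence}. One small merit worth noting: the paper states that $\lim_{t\to\infty}\langle\gamma,\gamma_t\rangle/\langle\gamma,f^t(e)\rangle = 1$, which is not well-posed when $\langle\gamma,\mu_+(e)\rangle = 0$ (in which case $\langle\gamma,f^t(e)\rangle$ may stay bounded or vanish), whereas your bound $\bigl|\langle\gamma,\gamma_t\rangle - \langle\gamma,f^t(e)\rangle\bigr|\le D_\gamma$ handles that degenerate case cleanly.
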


\begin{proof}
From the hypotheses $|\gamma'_t| \leq K_1$ and $|f^t(e)^*| \geq |f^t(e)| - 2K_1$, and from the fact that $f$ is expanding and hence $|f^t(e)| \to \infty$ for $t \to \infty$, we obtain directly 
$\underset{t \to \infty}{\lim} \frac{\langle \gamma, \gamma_t  \rangle}{\langle \gamma, f^t(e)  \rangle} = 1$ and
$\underset{t \to \infty}{\lim} \frac{|\gamma_t|}{| f^t(e)  |} = 1$. Hence the claim follows directly from Proposition \ref{edgeconvergence}.
\end{proof}

\begin{prop}
\label{fixed-current}
Let $\varphi\in Out(F_N)$ be an 
automorphism which is represented by an expanding 
train-track map $f:\Gamma\to\Gamma$. 
We assume that $\phi$ and $f$ have been replaced by positive powers according to Proposition \ref{edgeconvergence}.

Then there exist a constant $\lambda_{e}>1$ such that $\varphi(\mu_{+}(e))=\lambda_{e}\mu_{+}(e)$. 
\end{prop}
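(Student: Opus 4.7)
The plan is to approximate $\mu_+(e)$ by rational currents, apply $\varphi$ to the approximations using the explicit formula \eqref{rational-image}, and then pass back to the limit using Lemma \ref{K-approximation} together with the growth rate $\lambda_e$ provided by Lemma \ref{eigenvalue-}.

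More concretely, for each $t \geq 0$ use Lemma \ref{closing-up} to choose a reduced edge path $\alpha_t$ of bounded length $|\alpha_t| \leq K$ such that the concatenation $\gamma_t := f^t(e) \circ \alpha_t$ is a reduced loop in $\Gamma$, which represents a conjugacy class $[g_t] \subset F_N$ via the marking. I then consider the rational currents $\nu_t := \tfrac{1}{|\gamma_t|}\eta_{g_t}$. By Lemma \ref{K-approximation} (applied with $K_1 := K$) together with the identity $\langle \gamma, \eta_{g_t} \rangle = \langle \gamma, \gamma_t \rangle$ for any reduced path $\gamma$ in $\Gamma$, I get $\lim_{t \to \infty} \langle \gamma, \nu_t \rangle = \langle \gamma, \mu_+(e) \rangle$ for every reduced path $\gamma \in \cal P(\Gamma)$, so by the characterization of weak-$*$ convergence in $\Curr(F_N)$ recalled in Section \ref{sec:currents}, $\nu_t \to \mu_+(e)$ in $\Curr(F_N)$.

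Next I apply $\varphi$, which acts continuously on $\Curr(F_N)$ and satisfies $\varphi \eta_{g_t} = \eta_{\varphi(g_t)}$ by \eqref{rational-image}. The conjugacy class $\varphi(g_t)$ is represented by the cyclically reduced loop obtained from $f(\gamma_t) = f^{t+1}(e) \circ f(\alpha_t)$. Since $f^{t+1}(e)$ is legal and hence reduced, and since $|f(\alpha_t)| \leq \lambda'' K$, the Bounded Cancellation Lemma \ref{BCL} shows that the cyclically reduced loop representing $\varphi(g_t)$ has the form $f^{t+1}(e)^* \circ \beta_t$, where $f^{t+1}(e)^*$ is obtained from $f^{t+1}(e)$ by deleting an initial and a terminal segment of total length bounded by some constant $K_1$ depending only on $f$, and $|\beta_t| \leq K_1$. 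A second application of Lemma \ref{K-approximation} (with this new $K_1$) then yields
\[
\lim_{t \to \infty} \frac{\langle \gamma, \eta_{\varphi(g_t)} \rangle}{|f^{t+1}(e)|} \;=\; \langle \gamma, \mu_+(e) \rangle .
\]

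Finally, Lemma \ref{eigenvalue-} (applied to the substitution $\zeta_f$ after, if necessary, passing to a further power as in Proposition \ref{edgeconvergence}) provides a constant $\lambda_e > 1$ with $|f^{t+1}(e)|/|f^t(e)| \to \lambda_e$. Since $|\gamma_t|/|f^t(e)| \to 1$, I combine this with the previous display to conclude
\[
\langle \gamma, \varphi \mu_+(e) \rangle \;=\; \lim_{t \to \infty} \frac{\langle \gamma, \eta_{\varphi(g_t)} \rangle}{|\gamma_t|} \;=\; \lambda_e \, \langle \gamma, \mu_+(e) \rangle
\]
for every $\gamma \in \cal P(\Gamma)$, which by Lemma \ref{measure-current} (or equivalently the fact that currents are determined by their values on cylinder sets) gives $\varphi \mu_+(e) = \lambda_e \mu_+(e)$. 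The inequality $\lambda_e > 1$ is already built into Lemma \ref{eigenvalue-} because $f$ is expanding. The main delicate point will be step four: checking that after cyclic reduction of $f(\gamma_t)$ the resulting loop really fits the hypothesis of Lemma \ref{K-approximation}, i.e.\ that the uniformly bounded modifications caused by cancellation and by closing the path up do not disturb the limit frequencies. This is where the Bounded Cancellation Lemma and the uniform bound on $|\alpha_t|$ are essential.
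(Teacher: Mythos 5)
Your proposal follows exactly the paper's argument: close up $f^t(e)$ via Lemma \ref{closing-up} to get rational approximants $\eta_{g_t}/|\gamma_t|\to\mu_+(e)$, push forward with $\varphi$ using the formula $\varphi\eta_g=\eta_{\varphi(g)}$ and continuity of the action, apply Lemma \ref{K-approximation} a second time to the cyclically reduced image loop (whose difference from $f^{t+1}(e)$ is uniformly bounded by the Bounded Cancellation Lemma together with the bound $|\alpha_t|\le K$), and extract the eigenvalue $\lambda_e$ from Lemma \ref{eigenvalue-} (equivalently Lemma \ref{eigenvalue+}). The bookkeeping differs cosmetically — you compute cylinder values $\langle\gamma,\cdot\rangle$ directly, while the paper phrases the same step via projective normalizations $\eta_{[w_t]}/\|\eta_{[w_t]}\|$ and $\eta_{\phi[w_t]}/\|\eta_{\phi[w_t]}\|$ — but these are the same argument.
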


\begin{proof}
For the given graph $\Gamma$ let $K \geq 0$ be the constant given by Lemma \ref{closing-up}, 
and for any integer $t \geq 0$ let $\gamma'_t \in \cal P(\Gamma)$ with $|\gamma'_t| \leq K$ be the path given by Lemma \ref{closing-up} so that $\gamma_t =: f^t(e) \gamma'_t \in \cal P(\Gamma)$ is a reduced loop. Let $[w_t] \subset \FN \cong \pi_1\Gamma$ be the conjugacy class represented by $\gamma_t$, and note that the rational current $\eta_{[w_t]}$ satisfies $\|\eta_{[w_t]}\| = |\gamma_t|$, by equality (\ref{current-norm}) from section \ref{sec:currents}.

Similarly, consider $f(\gamma_n) = f^{t+1}(e) f(\gamma'_t)$, and notice that $|f(\gamma'_t)|$ is bounded above by the constant $K_0 = K \, \max\{|f(e)| \mid e \in E\Gamma\}$. Since $f$ is a train track map, the path $f^{t+1}(e)$ is reduced. Hence the reduced loop $\gamma''_t := [f(\gamma_n)] = [f^{t+1}(e) f(\gamma'_t)]$ can be written as product $f^{t+1}(e)^* \gamma''_t$ with 
$|\gamma''_t| \leq K_1$ and $|f^{t+1}(e)^*| \geq |f^{t+1}(e)| - 2K_1$, where $f^{t+1}(e)^*$ is a subpath of $f^{t+1}(e)$ and $K_1$ is the maximum of $K_0$ and the cancellation bound $C_f$ of $f$ (see Lemma \ref{BCL}).

Thus we can apply Lemma \ref{K-approximation} twice to obtain for any reduced path $\gamma$ in $\Gamma$ that
$$\underset{t \to \infty}{\lim} \frac{\langle \gamma, \gamma_t  \rangle}{|\gamma_t|} = \langle \gamma, \mu_+(e) \rangle$$
and
$$\underset{t \to \infty}{\lim} \frac{\langle \gamma, \gamma''_t  \rangle}{|\gamma''_t|} = 
\langle \gamma, \mu_+(e) \rangle \, .$$
The first equality implies that the rational currents $\eta_{[w_t]}$ satisfy \[
\underset{t \to \infty}{\lim}\frac{\eta_{[w_t]}}{\|\eta_{[w_t]}\|} = \mu_+(e).
\]
From the continuity of the $\Out(\FN)$-action on current space and from $\phi \eta_{[w_t]}= \eta_{\phi[w_t]}$ (see equality (\ref{rational-image}) from section \ref{sec:currents}) we thus deduce 
\[
\underset{t \to \infty}{\lim}\frac{\eta_{\phi[w_t]}}{\|\eta_{[w_t]}\|} = \phi \mu_+(e).
\]

However, since the reduced loops $\gamma''_t$ represent the conjugacy classes $\phi[w_t]$,  the second of the above equalities implies that 
\[
\underset{t \to \infty}{\lim}\frac{\eta_{\phi[w_t]}}{\|\eta_{\phi[w_t]}\|} = \mu_+(e).
\] 
Since $\underset{t \to \infty}{\lim} \frac{|\gamma_t|}{| f^t(e)  |} = 1$ and $\underset{t \to \infty}{\lim} \frac{|\gamma''_t|}{| f^{t+1}(e)  |} = 1$, with $|\gamma_t| = \|\eta_{[w_t]}\|$ and $|\gamma''_t| = \|\eta_{\phi[w_t]}\|$, 
the conclusion follows from Lemma \ref{eigenvalue+} below.
\end{proof}

\begin{lem}
\label{eigenvalue+}
For every edge $e$ of $\Gamma$ there exists a real number $\lambda_e > 1$ which satisfies:
$$\lim_{t \to \infty} \frac{|f^{t+1}(e)|}{|f^{t}(e)|} = \lambda_e$$
\end{lem}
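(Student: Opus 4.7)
The plan is to reduce this lemma directly to Lemma \ref{eigenvalue-}, applied to the substitution $\zeta_f: E\Gamma^* \to E\Gamma^*$ induced by the train track map $f$. The key observation is that since $f$ is a train track map, every edge $e \in E\Gamma$ is a legal path (of length one), and therefore all of its iterates $f^t(e)$ are legal. In particular, no cancellation occurs when forming $f^{t+1}(e) = f(f^t(e))$, so the combinatorial length $|f^t(e)|$ (computed as a reduced edge path in $\Gamma$) coincides with the word length $|\zeta_f^t(e)|$ of the corresponding word in the free monoid $E\Gamma^*$.

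First I would verify that $\zeta_f$ is an expanding substitution in the sense required by Lemma \ref{eigenvalue-}. This is immediate from Definition \ref{expanding-and-D}: for every edge $e$ there is some $t \geq 1$ with $|f^t(e)| \geq 2$, which is precisely the defining condition for $\zeta_f$ to be expanding on the alphabet $E\Gamma$. Next, I would note that the hypothesis of Proposition \ref{edgeconvergence} already allows us to replace $f$ (and hence $\zeta_f$) by a positive power, and by Remark \ref{transition}(2) the power required in Lemma \ref{eigenvalue-} only depends on the size of the alphabet, not on $\zeta_f$ itself, so this replacement can be arranged consistently with everything done previously.

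Applying Lemma \ref{eigenvalue-} to $\zeta_f$ then yields, for each letter $e \in E\Gamma$, a real number $\lambda_e > 1$ such that
\[
\lim_{t \to \infty} \frac{|\zeta_f^{t+1}(e)|}{|\zeta_f^t(e)|} = \lambda_e.
\]
Translating back via the equality $|f^t(e)| = |\zeta_f^t(e)|$ for all $t \geq 0$ gives the required limit for the combinatorial lengths in $\Gamma$. The inequality $\lambda_e > 1$ is exactly what Lemma \ref{eigenvalue-} provides, and it is consistent with the fact that $f$ is expanding, so the lengths $|f^t(e)|$ tend to infinity.

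The only potentially delicate point — and thus the main thing to check carefully — is the compatibility of the various passages to positive powers of $f$ (here, in Proposition \ref{edgeconvergence}, and in the cited results from \cite{LU1}). Since each of these replacements is of the form ``pass to $f^k$ for $k$ depending only on numerical data already fixed (the alphabet $E\Gamma$, the transition matrix of $\zeta_f$)'', a common power works for all of them simultaneously, so no genuine obstacle arises.
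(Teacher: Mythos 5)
Your argument is correct and follows essentially the same route as the paper's own (very terse) proof, which simply cites Lemma \ref{eigenvalue-} together with the definition of $\zeta_f$. You have merely spelled out the two details the paper leaves implicit — that $\zeta_f$ inherits the expanding property from $f$, and that $|f^t(e)| = |\zeta_f^t(e)|$ since iterates of a single edge under a train track map stay legal and hence reduced.
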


\begin{proof}
This is a direct consequence of Lemma \ref{eigenvalue-} and of the definition of $\zeta_f$.
\end{proof}

\bigskip

We now define $\Delta_{-}(\varphi)$ and $\Delta_{+}(\varphi)$ that are used in the next section:

\begin{defn} 
\label{inv-simpl}
Let $\varphi\in Out(F_N)$ be an outer automorphism. Assume that $\varphi$ is replaced by a  
positive power such that both, $\varphi$ and $\phi^{-1}$ are represented by expanding train-track maps
as in Proposition \ref{edgeconvergence}. Let $f:\Gamma\to\Gamma$ be the representative of $\phi$.
Then the 
{\em simplex of attraction} 
is defined as follows: 
\[
\Delta_{+}
(\phi)=\{[\sum_{e_i\in E^{+}\Gamma}a_i\mu_{+}(e_i)]\mid a_i\ge0, \sum a_i>0\}. 
\]
Analogously, we define the 
{\em simplex of repulsion} 
as $\Delta_{-}(\varphi)=\Delta_{+}(\varphi^{-1})$. 
\end{defn}

It should be pointed out here that distinct edges $e_i$ of $\Gamma$ may well give the same limit current $\mu_+(e_i)$. Furthermore, even if a subset of currents $\mu_+(e_i)$
which define 
pairwise different  vertices of the ``simplex'' $\Delta_+(\phi)$, this subset 
may 
a priori 
not be linearly independent. 
However, it follows from general results in dynamics (see for instance \cite{FM}) 
that the extremal points of $\Delta_+(\phi)$ correspond to ergodic currents, and that pairwise projectively distinct ergodic currents 
are linearly independent, 
so that $\Delta_+(\phi)$ is indeed a finite dimensional simplex.

From Proposition \ref{fixed-current} we obtain directly:

\begin{cor}
\label{delta-invariance}
Let $\phi \in \Out(\FN)$ be as in Definition \ref{inv-simpl}. Then the simplexes $\Delta_{+}(\varphi)$ and $\Delta_{-}(\varphi)$ are $\phi$-invariant:
$$
\phi(\Delta_{+}(\varphi)) = \Delta_{+}(\varphi) \qquad \text{and} \qquad  \phi(\Delta_{-}(\varphi)) = \Delta_{-}(\varphi)
$$
\qed
\end{cor}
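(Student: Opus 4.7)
The plan is to deduce this corollary as a direct consequence of Proposition \ref{fixed-current}, which states that each vertex current $\mu_+(e)$ of the simplex $\Delta_+(\varphi)$ satisfies $\varphi(\mu_+(e)) = \lambda_e \mu_+(e)$ for some scalar $\lambda_e > 1$. So each extremal ray of the cone spanning $\Delta_+(\varphi)$ is mapped to itself by $\varphi$, and the $\varphi$-invariance of the whole simplex should follow by taking non-negative linear combinations and projectivizing.

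First I would handle $\Delta_+(\varphi)$. Take an arbitrary element $[\mu] \in \Delta_+(\varphi)$, represented by $\mu = \sum_{e_i \in E^+\Gamma} a_i \mu_+(e_i)$ with $a_i \geq 0$ and $\sum a_i > 0$. Using the linearity of the $\Aut(\FN)$-action on $\Curr(\FN)$ (which descends to the projective action on $\mathbb P\Curr(\FN)$, see Proposition \ref{current-space}) together with Proposition \ref{fixed-current}, I get
\[
\varphi(\mu) = \sum_{e_i \in E^+\Gamma} a_i \lambda_{e_i}\, \mu_+(e_i),
\]
which is again a non-negative linear combination of the $\mu_+(e_i)$. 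Since each $\lambda_{e_i} > 1 > 0$ and at least one $a_i$ is strictly positive, the sum of the new coefficients $a_i \lambda_{e_i}$ is strictly positive, so $[\varphi(\mu)] \in \Delta_+(\varphi)$. This gives $\varphi(\Delta_+(\varphi)) \subseteq \Delta_+(\varphi)$.

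For the reverse inclusion I would exploit the same identity in the opposite direction: given $[\mu] = [\sum_i a_i \mu_+(e_i)] \in \Delta_+(\varphi)$, set $b_i := a_i / \lambda_{e_i} \geq 0$ and $\mu' := \sum_i b_i \mu_+(e_i) \in \Delta_+(\varphi)$; then $\varphi(\mu') = \sum_i b_i \lambda_{e_i} \mu_+(e_i) = \mu$, so $[\mu] \in \varphi(\Delta_+(\varphi))$. Hence $\varphi(\Delta_+(\varphi)) = \Delta_+(\varphi)$. Finally, for $\Delta_-(\varphi) = \Delta_+(\varphi^{-1})$, the identical argument applied to the train track representative of $\varphi^{-1}$ (which exists by the hypotheses carried through Definition \ref{inv-simpl}) shows $\varphi^{-1}(\Delta_-(\varphi)) = \Delta_-(\varphi)$, and applying $\varphi$ to both sides yields $\varphi(\Delta_-(\varphi)) = \Delta_-(\varphi)$.

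There is no serious obstacle here; the argument is purely formal once Proposition \ref{fixed-current} is in place. The only point worth double-checking is that the passage from currents to projective classes does not cause trouble — specifically that $\mu \neq 0$ is preserved under $\varphi$, which is automatic since $\varphi$ acts as a homeomorphism on $\Curr(\FN)$ fixing $0$, and that the coefficients $a_i \lambda_{e_i}$ do not all vanish, which is immediate from $\lambda_{e_i} > 0$.
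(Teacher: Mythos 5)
Your argument is correct and follows exactly the paper's route: the corollary is stated as an immediate consequence of Proposition \ref{fixed-current} together with the linearity of the $\Out(\FN)$-action, and you simply spell out the non-negative-linear-combination bookkeeping that the paper leaves implicit.
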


\begin{rem}
\label{max-dim}
There seems to be an interesting question 
as to 
%of 
what the maximal possible dimension of the limit simplexes $\Delta_+(\phi)$ (or $\Delta_-(\phi) = \delta_+(\phi^{-1})$) can be. It has been shown by G. Levitt 
\cite{Levitt} that for any $\phi \in \Out(\FN)$ the number of exponentially growing strata in any 
train track representative of $\phi$ is bounded above by $\frac{3N-2}{4}$ (and that this bound is attained, by certain geometric automorphisms). This gives: 
$$\dim(\Delta_+(\phi)) \leq \frac{3N-6}{4}$$
We do not know whether there are hyperbolic automorphisms which realize this bound. Preliminary considerations have lead us to construct for any integer $k \geq 1$ a family of hyperbolic automorphisms $\phi_k$ with $\text{rank}(\phi_k) 
=  2 \cdot 3^k + 3$ and $\dim(\Delta_+(\phi_k)) 
= \frac{3^{k+1} - 1}{2}$.

\end{rem}

In the next section we will specify the automorphism in question to be hyperbolic. Recall that $\phi \in \Out(\FN)$ is hyperbolic if there is no non-trivial $\phi$-periodic conjugacy class in $\FN$. Let $f:\Gamma \to \Gamma$ be an absolute train track representative of $\phi$. By Remark \ref{make-expanding} we can assume that $f$ is expanding. 
We then replace $\phi$ and $f$ by a positive power so that Convention \ref{tt-map-convention} is satisfied, and furthermore subdivide edges in accordance to Convention \ref{endpoints-of-INPs}.
Furthermore we have:

\begin{lem}
\label{INPfactors} 
Any expanding train track representative $f:\Gamma\to\Gamma$ of a hyperbolic automorphism 
$\varphi\in Out(F_N )$
which satisfies Convention \ref{endpoints-of-INPs} also 
satisfies the hypotheses of Convention \ref{conv-hyperbolic}.
\end{lem}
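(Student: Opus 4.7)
The plan is to reduce this lemma to the already-established Lemma \ref{INPfactors-} by verifying its hypothesis: that no positive power of $f$ fixes the free homotopy class of any non-trivial loop in $\Gamma$.

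First I would translate the hyperbolicity of $\phi$ into a statement about $f$ using the marking. Since $f: \Gamma \to \Gamma$ is a topological representative of $\phi$, we have $f_* = \theta \phi \theta^{-1}$ on $\pi_1 \Gamma$ (well-defined up to inner automorphisms), and hence $f^t_* = \theta \phi^t \theta^{-1}$ for every $t \geq 1$. A non-trivial free loop $\ell$ in $\Gamma$ whose free homotopy class is fixed by $f^t$ corresponds to a non-trivial conjugacy class in $\pi_1 \Gamma$ fixed by $f^t_*$, which in turn corresponds via $\theta^{-1}$ to a non-trivial conjugacy class in $F_N$ fixed by $\phi^t$. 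But the hyperbolicity of $\phi$ (equivalently, $\phi$ being atoroidal) forbids precisely such a periodic conjugacy class. Hence no positive power of $f$ can fix the free homotopy class of a non-trivial loop in $\Gamma$.

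Now I would simply invoke Lemma \ref{INPfactors-}: since $f$ is expanding and satisfies the no-periodic-loop hypothesis just verified, there is an upper bound $A(f)$ to the number of INP factors in any multi-INP path in $\Gamma$. This is the first (and main) requirement of Convention \ref{conv-hyperbolic}. The second, parenthetical requirement of Convention \ref{conv-hyperbolic}, that no non-trivial loop in $\Gamma$ be a cyclic legal concatenation of INPs, is explicitly noted in the statement of Convention \ref{conv-hyperbolic} to be a consequence of the existence of the bound $A(f)$ (such a loop traversed $k$ times would yield arbitrarily long multi-INPs), so nothing further needs to be checked.

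The main (and essentially only) step is the translation from hyperbolicity of $\phi$ to the no-periodic-loop property of $f$; once that is in hand, the result is immediate from Lemma \ref{INPfactors-}. There is no real obstacle here, since Convention \ref{endpoints-of-INPs} is already assumed and all remaining technical content is packaged in Lemma \ref{INPfactors-}.
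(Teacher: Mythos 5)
Your proposal is correct and matches the paper's approach: the paper's proof is exactly ``This is a direct consequence of Lemma~\ref{INPfactors-},'' and the translation from hyperbolicity of $\phi$ to the no-fixed-loop hypothesis of Lemma~\ref{INPfactors-}, which you spell out via the marking, is precisely the implicit content of that citation. You have simply supplied the routine details that the paper omits.
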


\begin{proof} 
This is a direct consequence of Lemma \ref{INPfactors-}.
\end{proof} 

\begin{conv}
\label{before-section-6}
A hyperbolic automorphism $\phi \in \Out(\FN)$ is said to have a {\em properly expanding} train track representative $f: \Gamma \to \Gamma$ if 
$f$ satisfies Conventions \ref{tt-map-convention},  \ref{endpoints-of-INPs} and \ref{conv-hyperbolic}, and if both have been raised to a suitable positive power according to Propositions \ref{edgeconvergence}  and \ref{fixed-current}.
\end{conv}

\begin{rem}
\label{proper-tts-exist}
From Lemma \ref{INPfactors} one obtains directly   
that every hyperbolic automorphism $\phi$ which possesses an absolute train track representative has a positive power $\phi^t$ which possesses a properly expanding train track representative. 
\end{rem}

\section{Hyperbolic automorphisms}
\label{convergence}

Let $\phi \in \Out(\FN)$ be a hyperbolic automorphism 
which possesses a properly expanding train track representative $f: \Gamma \to \Gamma$ as in Convention \ref{before-section-6}.
Let $[w]$ be a conjugacy class in $F_{N}$. Represent $[w]$ by a reduced loop 
$\gamma$ in $\Gamma$. Then the goodness of $w$, denoted by $\g([w])$, is defined by: \[
\g([w]):=\g(\gamma)\]

\begin{lem}
\label{rationalconvergence} 
Given a neighborhood $U$ of the simplex of attraction $\Delta_{+}(\varphi)\in\mathbb{P}\Curr(F_N)$, there exist a bound 
$\delta>0$ and an integer $M=M(U) \geq 1$ 
such that, for any 
$[w]\in F_N$ with $\g([w])\ge\delta$, we have \[
(\varphi^{M})^{n}[\eta_w]\in U
\]
for all $n\ge 1$.
\end{lem}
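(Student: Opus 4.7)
The plan is to reduce the lemma to a uniform cylinder-frequency estimate. By compactness of $\mathbb{P}\Curr(F_N)$ (Proposition \ref{current-space}) and of $\Delta_+(\varphi)$, together with the convergence criterion recalled just after Proposition \ref{current-space}, the open set $U$ contains a ``finite-data tube'': there exist $\epsilon>0$ and finitely many reduced edge paths $\gamma_0^{(1)},\dots,\gamma_0^{(K)}$ in $\Gamma$ such that $[\mu']\in U$ whenever some $\nu\in\Delta_+(\varphi)$ satisfies
\[
\left|\frac{\langle\gamma_0^{(j)},\mu'\rangle}{\|\mu'\|_\Gamma}-\frac{\langle\gamma_0^{(j)},\nu\rangle}{\|\nu\|_\Gamma}\right|<\epsilon \quad \text{for all } j.
\]
So it will be enough, for every $n\geq 1$ and every $[w]$ with $\g([w])\geq\delta$, to exhibit such an approximating $\nu^{(n)}\in\Delta_+(\varphi)$ uniformly in $[w]$ and $n$.

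Fix a small $\epsilon_0>0$ (to be adjusted). By Corollary \ref{goodbig} there is $M_1=M_1(\delta,\epsilon_0)$ such that $\g([w])\geq\delta$ forces $\g([f^{M_1}(\gamma)])\geq 1-\epsilon_0$, where $\gamma$ is any reduced loop representing $[w]$. Take $M\geq M_1$, write $\gamma_n:=[f^{Mn}(\gamma)]$ (with $\gamma_0=\gamma$), and use the monotonicity in Proposition \ref{goodness-growth}(a) to conclude $\g(\gamma_n)\geq 1-\epsilon_0$ for all $n\geq 1$. The candidate I propose to compare $[\eta_{\gamma_n}]$ with is
\[
\mu^{(n)} \,:=\, \sum_{e\in E^+\Gamma} m_e(\gamma_{n-1})\,|f^M(e)|\,\mu_+(e),
\]
where $m_e(\gamma_{n-1})$ counts occurrences of $e$ or $\bar e$ in $\gamma_{n-1}$. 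All coefficients are non-negative, so $[\mu^{(n)}]\in\Delta_+(\varphi)$.

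The heart is the estimate $\bigl|\langle\gamma_0^{(j)},\gamma_n\rangle/|\gamma_n|-\langle\gamma_0^{(j)},\mu^{(n)}\rangle/\|\mu^{(n)}\|_\Gamma\bigr|<\epsilon$. By Lemma \ref{growth-of-good}, every good edge $e$ of $\gamma_{n-1}$ contributes $f^M(e)$ to $\gamma_n$ as a legal subpath without cancellation, so summing over all edges gives $\|\mu^{(n)}\|_\Gamma=|f^M(\gamma_{n-1})|_{\mathrm{unreduced}}=|\gamma_n|+C^{(M)}$, where $C^{(M)}$ is the total cancellation at the (at most $\epsilon_0|\gamma_{n-1}|$ many) illegal turns of $\gamma_{n-1}$. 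Counting occurrences of $\gamma_0^{(j)}$ inside each $f^M(e)$ and applying Proposition \ref{edgeconvergence} uniformly over the finite set $E\Gamma$ yields $\sum_e m_e(\gamma_{n-1})\langle\gamma_0^{(j)},f^M(e)\rangle=\langle\gamma_0^{(j)},\mu^{(n)}\rangle\,(1+o_M(1))$. After accounting for $\gamma_0^{(j)}$-occurrences that cross edge boundaries (at most $|\gamma_{n-1}|\cdot|\gamma_0^{(j)}|$) and those affected by cancellation (at most $|\gamma_0^{(j)}|\cdot C^{(M)}$), a direct ratio manipulation reduces the whole approximation to controlling $C^{(M)}/\|\mu^{(n)}\|_\Gamma$ and $|\gamma_{n-1}|/\|\mu^{(n)}\|_\Gamma\leq 1/\lambda'^M$, the latter tending to $0$ by Convention \ref{tt-map-convention}.

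The main obstacle is the bound on $C^{(M)}$: the generic bounded-cancellation constant $C_{f^M}$ grows like $\lambda''^M$ and would swamp the $\lambda'^M$ in the denominator. The rescue lies in the hyperbolic hypothesis of Convention \ref{conv-hyperbolic}: by Lemma \ref{single-illegal-turn}, any illegal turn of $\gamma_{n-1}$ either resolves under $f^{M_2}$ (and hence contributes cancellation only during a bounded initial stretch of iterates) or is the tip of an INP/pre-INP subpath, of which only finitely many types exist by Lemma \ref{finitely-many-INPs}. In either case the cancellation at one illegal turn under $f^M$ is bounded by a constant $D=D(f)$ \emph{independent of $M$}. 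This gives $C^{(M)}\leq D\cdot\ILT(\gamma_{n-1})\leq D\epsilon_0|\gamma_{n-1}|$, and therefore $C^{(M)}/\|\mu^{(n)}\|_\Gamma\leq D\epsilon_0/\lambda'^M$. Choosing first $M\geq M_1$ large enough to make $o_M(1)$ and $1/\lambda'^M$ small, then $\epsilon_0$ small, and finally $\delta$ via Corollary \ref{goodbig}, all error terms fall below $\epsilon$, completing the argument.
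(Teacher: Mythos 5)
Your overall strategy matches the paper's: compare the iterated rational current with a nonnegative combination of the $\mu_+(e)$'s (an element of $\Delta_+(\phi)$) and control the error terms via high goodness. However, the pivotal quantitative claim is false, and this is a genuine gap. You assert that the cancellation at one illegal turn of $\gamma_{n-1}$ under $f^M$ is bounded by a constant $D(f)$ independent of $M$, citing Lemma~\ref{single-illegal-turn} (the turn ``resolves'') or Lemma~\ref{finitely-many-INPs} (it is an INP tip). But both scenarios give \emph{unbounded} total cancellation as $M \to \infty$. If the illegal turn resolves at step $M_2$, then $[f^{M_2}(\gamma)]$ is a \emph{proper} subpath of $f^{M_2}(\gamma)$; the deleted prefix/suffix, though of bounded length, is then stretched by $f^{M - M_2}$, so the gap $|f^M(\gamma)| - |[f^M(\gamma)]|$ grows like $\lambda''^M$. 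The INP case is even clearer: an INP $\eta$ satisfies $[f^{\text{per}}(\eta)] = \eta$, so $|f^M(\eta)| - |[f^M(\eta)]| = |f^M(\eta)| - O(1) \sim \lambda^M$. In general $C^{(M)} \lesssim \lambda''^{M}\,\ILT(\gamma_{n-1})$, not $D \cdot \ILT(\gamma_{n-1})$, and hence $C^{(M)}/\|\mu^{(n)}\|_\Gamma \lesssim (\lambda''/\lambda')^M \epsilon_0$, not $D\epsilon_0/\lambda'^M$. Your stated order of quantifiers (fix $M$, then $\epsilon_0$, then $\delta$) is also circular as written, since you define $M_1 = M_1(\delta,\epsilon_0)$ via Corollary~\ref{goodbig} before $\delta$ is chosen.

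The rescue is exactly what the paper does: make the goodness threshold depend on $M$ so as to absorb the $(\lambda''/\lambda')^M$ factor. Concretely, the paper sets $\delta$ so that $(\lambda''/\lambda')^M(1/\delta - 1) \leq \epsilon/4$ (their inequality \eqref{eq:badovergood}); in your notation this amounts to choosing $\epsilon_0 \lesssim (\lambda'/\lambda'')^M$ \emph{after} $M$ is fixed and then setting $\delta := 1 - \epsilon_0$ directly, using monotonicity of goodness (Proposition~\ref{goodness-growth}(a)) rather than Corollary~\ref{goodbig} to propagate it to all $n$. With the corrected cancellation estimate and that quantifier repair, your argument becomes essentially identical to the paper's. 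Note also a cleaner variant that the paper uses: build the comparison current from the \emph{good} edges of $c$ only; by Lemma~\ref{growth-of-good} their $f^M$-images suffer no cancellation at all, so one never needs to bound $C^{(M)}$ directly, only the relative size of bad subpaths, which is where the $(\lambda'/\lambda'')^M$-dependent $\delta$ enters.
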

\setcounter{equation}{0}

\begin{proof} 
We first replace $\phi$ 
by a 
positive 
power as in Proposition \ref{goodness-growth} so that the goodness function for the train-track
map 
$f:\Gamma\to\Gamma$ 
becomes 
monotone. 

Recall from Section \ref{sec:currents}
that $[\nu]\in \mathbb{P}\Curr(F_N)$ is close to $[\nu']\in\mathbb{P}\Curr(F_N)$ if there exists  $\epsilon>0$ and $R>>0$ 
such that for all reduced edge paths 
$\gamma$ with $|\gamma|\le R$ 
we have \[
\left|\dfrac{\left<\gamma,\nu\right>}{\|\nu\|_{\Gamma}}-\dfrac{\left<\gamma,\nu'\right>}{\|\nu'\|_{\Gamma}}\right|<\epsilon.
\]
Thus, since $\Delta_+(\phi)$ is compact, there exist $\epsilon > 0$ and $R \in \R$ such that the above inequalities imply for $\nu' \in \Delta_+(\phi)$ that $\nu \in U$.

We proved the pointwise convergence for edges in Proposition \ref{edgeconvergence}. Since there are only finitely many edges and finitely many edge paths $\gamma$ in $\Gamma$ with $|\gamma|\le R$, we can pick an integer $M_{0}\ge0$ such that 
\begin{equation} \label{eq:edgepointwise}
\left|\dfrac{\left<\gamma,f^{n}(e)\right>}{|f^{n}(e)|}-\left<\gamma,\mu_{+}(e)\right>\right|<\epsilon/4
\end{equation}
for all $n\ge M_0$, for all $\gamma$ with $|\gamma|\le R$ and for all edges $e$ of $\Gamma$.  

Let $\lambda',\lambda''>1$ be the minimal and the maximal expansion factors respectively as given
in Convention \ref{tt-map-convention}. 
For any reduced loop $c$ in $\Gamma$ with
$\g(c) 
\geq \frac{1}{1+\epsilon/4}$ 
iterative application of the fact, that $f$ maps any good edge in $c$ to to a path in $[f(c)]$ which consists entirely of good edges and has length at least $\lambda'$, implies:
\[
|f^{n}(c)|\ge\frac{1}{1+\epsilon/4}|c|(\lambda')^{n}.
\]

Thus for any integer $M_{1}>\log_{\lambda'}{R(1+\frac{4}{\epsilon})}$ 
and 
all $n\ge M_1$ 
we get 
the following inequalities:
\begin{equation}
\label{eq:goodconjugacy}
\dfrac{R|c|}{|f^{n}(c)|
}
\le \dfrac{R|c|_\Gamma(1+\epsilon/4)}{|c|(\lambda')^{n}}\le\dfrac{R(1+\epsilon/4)}{(\lambda')^{M_1}}\le\dfrac{R(1+\epsilon/4)}{R(1+4/\epsilon)}
=
\frac{\epsilon}{4}
\end{equation}

We note that 
for any integer $m \geq 1$ and
for each edge the minimum expansion factor for $f^{m}$ is at least $(\lambda')^{m}$ and the maximum expansion factor for $f^{m}$ is at most $(\lambda'')^{m}$. 

For the rest of the proof set
\[
M=\max\{M_0,M_1\}
\]
and
\[
\delta :=\max\{\frac{1}{1+\epsilon/4},\frac{1}{1+(\frac{\lambda'}{\lambda''})^M\epsilon/4}\},
\]
and let $c$ be a reduced loop in $\Gamma$ which represents a conjugacy class $w$ with $\g(w)\ge\delta$. 

The assertion of Lemma \ref{rationalconvergence} now follows if we show
that for all integers $n \geq 1$ the current 
$(\varphi^{M})^n([\eta_w])$ is $(\epsilon, R)$-close (in the above sense) to some point in $\Delta_{+}(\varphi)$.  
Indeed, since by the first paragraph of the proof the goodness function is monotone, it suffices to assume $n = 1$ and apply the resulting statement iteratively.

For simplicity we 
denote from now on $f^M$ by $f$. Another auxiliary computation gives:
\begin{equation} \label{eq:badovergood}
\frac{(\lambda'')^M \cdot
\#\{ {\rm bad\ edges\ in\ c}\}}
{(\lambda')^M \cdot \#\{ {\rm good\ edges\ in\ c}\}}
=\dfrac{(\lambda'')^M(1-\g(w))|c|}{(\lambda')^M\g(w)|c|}
=(\dfrac{\lambda''}{\lambda'})^M\left(\dfrac{1}{\g(w)}-1\right)\le(\dfrac{\lambda''}{\lambda'})^M\left(\dfrac{1}{\delta}-1\right)\le
\frac{\epsilon}{4}.
\end{equation}

We now write 
$c=c_{1}c_{2}\dotsc b_{1}\dotsc c_{p}c_{p+1}\dotsc c_{r}b_{2}c_{r+1}\dotsc b_{k}
\ldots c_{s-1} 
c_{s}$, where the $c_{i}$ denote good edges and the $b_{j}$ denote maximal bad subpaths of $c$. 
Note for the second of the inequalities below that the definition of ``good'' implies that there can be no cancellation in $f(c)$ between adjacent $f(c_i)$ and $f(c_{i+1})$ nor between adjacent $f(c_i)$ and $f(b_{j})$, see Lemma \ref{growth-of-good}.

Then we 
calculate:
\begin{align*}
&\left|\dfrac{\left<\gamma,f(c)\right>}{|f(c)|}-\dfrac{\left<\gamma,|f(c_1)|\mu_{+}(c_1)+\dotsc+|f(c_s)|\mu_{+}(c_s)\right>}{\sum_{i=1}^{s}|f(c_i)|}\right|
\\
&
 \le \left|\dfrac{\left<\gamma,f(c)\right>}{|f(c)|}-\sum_{i=1}^{s}\dfrac{\left<\gamma,f(c_{i})\right>}{|f(c)|}\right|
\\
&+\left|\sum_{i=1}^{s}\dfrac{\left<\gamma,f(c_{i})\right>}{|f(c)|}-\dfrac{\sum_{i=1}^{s}\left<\gamma,f(c_i)\right>}{\sum_{i=1}^{s}|f(c_i)|}\right|
\\
&+\left|\dfrac{\sum_{i=1}^{s}\left<\gamma,f(c_i)\right>}{\sum_{i=1}^{s}|f(c_i)|}-\dfrac{\left<\gamma,|f(c_1)|\mu_{+}(c_1)+\dotsc+|f(c_s)|\mu_{+}(c_s)\right>}{\sum_{i=1}^{s}|f(c_i)|}\right|
\\
& \le \dfrac{2R|c|}{|f(c)|}+\sum_{j=1}^{k}\dfrac{\left<\gamma,[f(b_{j})]\right>}{|f(c)|}
\\
&+\left|\dfrac{\sum_{i=1}^{s}\left<\gamma,f(c_i)\right>}{\sum_{i=1}^{s}|f(c_i)|+\sum_{j=1}^{k}|[f(b_j)]|}-\dfrac{\sum_{i=1}^{s}\left<\gamma,f(c_i)\right>}{\sum_{i=1}^{s}|f(c_i)|}\right|
\\
&+\left|\dfrac{\sum_{i=1}^{s}\left<\gamma,f(c_i)\right>}{\sum_{i=1}^{s}|f(c_i)|}-\dfrac{\left<\gamma,|f(c_1)|\mu_{+}(c_1)+\dotsc+|f(c_s)|\mu_{+}(c_s)\right>}{\sum_{i=1}^{s}|f(c_i)|}\right|
\\
&< \epsilon/4+\epsilon/4+\epsilon/4+\epsilon/4=\epsilon.
\end{align*}

Here the first inequality is just a triangle inequality.  In the second inequality the last two terms are unchanged, and first two terms come from the first term in the previous quantity and follows from counting frequencies as follows: An occurrence of an edge path $\gamma$ or its inverse 
can occur either inside the image of a good edge $f(c_i)$ or inside of the image of a bad segment $[f(b_j)]$, or it might cross over the concatenation points. This observation gives the claimed inequality. In the final inequality, 
the first $\epsilon/4$ 
follows from the equation (\ref{eq:goodconjugacy}). The second one follows from the equation (\ref{eq:badovergood}) as follows: 
\[
\sum_{j=1}^{k}\dfrac{\left<\gamma,[f(b_{j})]\right>}{|f(c)|}\le\sum_{j=1}^{k}\dfrac{|[f(b_{j})]|}{|f(c)|}\le\sum_{j=1}^{k}\dfrac{|b_{j}|(\lambda'')^M}{(\lambda')^M
\ss
\cdot \#\{\textit{\rm good edges in $c$}\}
}\le \frac{\epsilon}{4}.
\]

The 
third $\epsilon/4$ comes from the following observation: 
\begin{align*}
&\left|\dfrac{\sum_{i=1}^{s}\left<\gamma,f(c_i)\right>}{\sum_{i=1}^{s}|f(c_i)|+\sum_{j=1}^{k}|[f(b_j)]|}-\dfrac{\sum_{i=1}^{s}\left<\gamma,f(c_i)\right>}{\sum_{i=1}^{s}|f(c_i)|}\right|\\
&=\left|\dfrac{\big(\sum_{i=1}^{s}\left<\gamma,f(c_i)\right>\big)\big(\sum_{j=1}^{k}|[f(b_j)]|\big)}{\big(\sum_{i=1}^{s}|f(c_i)|\big)\big(\sum_{i=1}^{s}|f(c_i)|+\sum_{j=1}^{k}|[f(b_j)]|\big)}\right|\\
&
\leq
\left|\dfrac{\sum_{i=1}^{s}\left<\gamma,f(c_i)\right>
}{
\sum_{i=1}^{s}|f(c_i)|}
\cdot
\dfrac{\sum_{j=1}^{k}|[f(b_j)]|}{
\sum_{i=1}^{s}|f(c_i)|}\right|\\
&\le\dfrac{(\lambda'')^M\sum_{j=1}^{k}|b_j|}{(\lambda')^M\sum_{i=1}^{s}|c_i|}\le\epsilon/4
\end{align*}
by (\ref{eq:badovergood}). 

Finally, the last $\epsilon/4$ can be verified using (\ref{eq:edgepointwise}) as follows:

\begin{align*}
&\left|\dfrac{\sum_{i=1}^{s}\left<\gamma,f(c_i)\right>}{\sum_{i=1}^{s}|f(c_i)|}-\dfrac{\left<\gamma,|f(c_1)|\mu_{+}(c_1)+\dotsc+|f(c_s)|\mu_{+}(c_s)\right>}{\sum_{i=1}^{s}|f(c_i)|}\right|\\
&=\left|\dfrac{\sum_{i=1}^{s}|f(c_i)|(\dfrac{\left<\gamma,f(c_i)\right>}{|f(c_i)|}-\left<\gamma,\mu_{+}(c_i)\right>)}{\sum_{i=1}^{s}|f(c_i)|}\right|\\
&\le \dfrac{\sum_{i=1}^{s}|f(c_i)|\epsilon/4}{\sum_{i=1}^{s}|f(c_i)|}=\epsilon/4
\end{align*}

Since after applying $\varphi^{M}$ to a conjugacy class with $\g(w)\ge\delta$ we still have $\g(\varphi^{M}(w))\ge\delta$ we get $(\varphi^{M})^{n}([\eta_w])\in U$ for all $n\ge1$. 
\end{proof}

\begin{lem} \label{convgood} 
For 
any $\delta>0$ and any 
neighborhood $U$ of $\Delta_{+}
(\phi)$ there exists an integer $M(\delta, U)>0$ such that (up to replacing $\varphi$ by a positive
power) for any conjugacy class
$w$ with goodness $\g(w)>\delta$ we have
$$\varphi^n[\eta_w]\in U$$
for all $n\ge M$. 
\end{lem}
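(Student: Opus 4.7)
The plan is to combine Lemma \ref{rationalconvergence} (which gives uniform convergence for conjugacy classes whose goodness exceeds a fixed threshold $\delta_0$ determined by $U$) with Corollary \ref{goodbig} (which lets us boost any positive goodness arbitrarily close to $1$ by iterating $f$). Since Lemma \ref{rationalconvergence} only controls iterates along the arithmetic progression $kM_1$, we will additionally need to handle the finitely many residue classes modulo $M_1$ by using the boost of goodness uniformly over a window of length $M_1$.

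Concretely, I would proceed as follows. First, apply Lemma \ref{rationalconvergence} to the neighborhood $U$ to obtain a threshold $\delta_0 > 0$ and an integer $M_1 = M_1(U) \geq 1$ such that every conjugacy class $w'$ with $\g([w']) \geq \delta_0$ satisfies $(\varphi^{M_1})^k[\eta_{w'}] \in U$ for every $k \geq 1$. Next, choose $\epsilon > 0$ with $1 - \epsilon \geq \delta_0$, and apply Corollary \ref{goodbig} (to the given $\delta$ and this $\epsilon$) to obtain an integer $M_2 = M_2(\delta, \epsilon)$ such that for every loop $\gamma$ with $\g(\gamma) \geq \delta$ and every $m \geq M_2$, one has
\[
\g([f^m(\gamma)]) \geq 1 - \epsilon \geq \delta_0.
\]
Here we have already passed to a positive power of $\varphi$ as needed by Proposition \ref{goodness-growth} so that goodness is monotone under iteration of $f$.

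Set $M := M_1 + M_2$. Given any $n \geq M$, write $n - M_2 = qM_1 + r$ with $q \geq 1$ and $0 \leq r < M_1$, and let $m := M_2 + r$, so that $m \geq M_2$ and $n = m + qM_1$. Then the conjugacy class $w' := \varphi^m(w)$ is represented by the reduced loop $[f^m(\gamma)]$ (where $\gamma$ represents $w$), and by the previous paragraph has goodness at least $\delta_0$. Applying the conclusion of Lemma \ref{rationalconvergence} to $w'$ with exponent $k = q \geq 1$ gives
\[
\varphi^n[\eta_w] \;=\; (\varphi^{M_1})^q[\eta_{w'}] \;\in\; U,
\]
which is the desired conclusion.

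The argument is essentially bookkeeping, so there is no serious obstacle; the only subtlety is that Lemma \ref{rationalconvergence} controls orbits only along the arithmetic progression $kM_1$, and this is precisely what forces us to apply Corollary \ref{goodbig} uniformly over an entire window $[M_2, M_2 + M_1)$ of starting times rather than at a single instant, so that every residue class modulo $M_1$ in the tail $n \geq M$ can be fed into the hypothesis of Lemma \ref{rationalconvergence}.
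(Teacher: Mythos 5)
Your proof is correct and follows exactly the route the paper indicates: the paper's own proof of this lemma is a one-line citation of Corollary \ref{goodbig} and Lemma \ref{rationalconvergence}, and you have supplied precisely the bookkeeping needed to combine them. One small simplification worth noting: the clause ``up to replacing $\varphi$ by a positive power'' in the statement lets you absorb the exponent $M_1$ from Lemma \ref{rationalconvergence} into $\varphi$ at the outset, so that Lemma \ref{rationalconvergence} then controls \emph{every} iterate $n\ge 1$ and the residue-class decomposition $n-M_2=qM_1+r$ becomes unnecessary.
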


\begin{proof} This is a direct consequence of Corollary 
\ref{goodbig} and Lemma \ref{rationalconvergence}. 
\end{proof}

\begin{prop}\label{backforth}
Let $\varphi\in Out(F_N)$ be a hyperbolic outer automorphism such that $\varphi$ and $\varphi^{-1}$ both admit properly expanding train-track representatives.

Given neighborhoods $U$ of the simplex of attraction $\Delta_{+}
(\phi)$ and $V$ of the simplex of repulsion $\Delta_-
(\phi)$ in $\mathbb{P}\Curr(F_N)$, 
then (up to replacing $\varphi$ by a positive 
power) there exist an integer $M\ge 0$ such that for any conjugacy class $w\in F_N$ we have
\[
\varphi^{n}[\eta_w]\in U \qquad {\rm or} \qquad  \varphi^{-n}[\eta_w]\in V
\]
for all $n\ge M$. 
\end{prop}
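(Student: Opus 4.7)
The plan is to combine the goodness dichotomy of Proposition \ref{backforthgoodness} with two applications of Lemma \ref{convgood}, one for $\varphi$ and one for $\varphi^{-1}$.

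Fix any $\delta$ with $0 < \delta < 1$. Applying Lemma \ref{convgood} to $\varphi$, to the neighborhood $U$ of $\Delta_{+}(\varphi)$ and to the threshold $\delta$ yields an integer $M_1 \geq 0$ such that every conjugacy class $w'$ with $\g(w') \geq \delta$ (where goodness is measured in the train-track representative $f:\Gamma \to \Gamma$ of $\varphi$) satisfies $\varphi^n[\eta_{w'}] \in U$ for all $n \geq M_1$. The same lemma applied to $\varphi^{-1}$, to its train-track representative $f':\Gamma' \to \Gamma'$ and to the neighborhood $V$ of $\Delta_{-}(\varphi) = \Delta_{+}(\varphi^{-1})$ produces an integer $M_2 \geq 0$ with the analogous property involving the goodness $\g'$ in $\Gamma'$ and the map $\varphi^{-1}$.

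Next I would apply Proposition \ref{backforthgoodness} to the pair $(f,f')$ with the same $\delta$; after possibly raising $\varphi$ to a common positive power this produces an integer $M_3 \geq 0$ such that, for every pair of corresponding reduced loops $\gamma$ in $\Gamma$ and $\gamma'$ in $\Gamma'$ and every $n \geq M_3$, one has either \emph{(a)} $\g([f^{n}(\gamma)]) \geq \delta$, or \emph{(b)} $\g'([(f')^{n}(\gamma')]) \geq \delta$.

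Now given any conjugacy class $w \in F_N$, realize it by corresponding loops $\gamma$ in $\Gamma$ and $\gamma'$ in $\Gamma'$. In case \emph{(a)} the conjugacy class $w' := \varphi^{M_3}(w)$ has goodness $\g(w') \geq \delta$, so the first step gives $\varphi^n[\eta_{w'}] \in U$ for all $n \geq M_1$, that is $\varphi^m[\eta_w] \in U$ for all $m \geq M_1 + M_3$. In case \emph{(b)} the symmetric argument, applied with $\varphi^{-1}$ and $\g'$ in place of $\varphi$ and $\g$, yields $\varphi^{-m}[\eta_w] \in V$ for all $m \geq M_2 + M_3$. Setting $M := M_3 + \max(M_1, M_2)$ then finishes the proof. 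The main technical obstacle, namely producing the goodness dichotomy itself, has already been overcome by Proposition \ref{backforthgoodness}; after that the argument is essentially bookkeeping and I do not foresee any further difficulty.
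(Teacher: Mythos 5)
Your proof is correct and takes essentially the same approach as the paper's, which dispatches the statement in one line as a ``direct consequence of Proposition \ref{backforthgoodness} and Lemma \ref{convgood}''; you have simply spelled out the routine bookkeeping (two applications of Lemma \ref{convgood}, one per direction, spliced with the dichotomy from Proposition \ref{backforthgoodness} at time $M_3$ and shifted by $M_3$ to account for the initial iterate).
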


\begin{proof} This is a direct consequence of Proposition \ref{backforthgoodness} 
(see Remark \ref{back-to-ht}) 
and Lemma \ref{convgood}. 
\\${}^{}$
\end{proof}

The following result also proves Theorem \ref{mainthm-intro} from the Introduction:

\begin{thm}
\label{mainthm} 
Let $\varphi\in Out(F_N)$ be a hyperbolic outer automorphism such that $\varphi$ and $\varphi^{-1}$ admit absolute train-track representatives. Then, $\varphi$ acts on $\mathbb{P}\Curr(F_N)$ with uniform North-South 
dynamics from $\Delta_-(\phi)$ to $\Delta_+(\phi)$:

Given an open neighborhood $U$ of the  
simplex of attraction $\Delta_{+}(\varphi)$ and a compact set $K\subset\mathbb{P}\Curr(F_N)\smallsetminus\Delta_{-}(\varphi)$ there exists an integer $M>0$ such that $\varphi^{n}(K)\subset U$ for all $n\ge M$. 
\end{thm}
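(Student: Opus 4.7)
The plan is to assemble the result from the three essentially independent packages built earlier: the abstract North--South criteria of Section \ref{NSD}, the train-track goodness dichotomy of Section \ref{train-tracks}, and the convergence estimate for conjugacy classes of large goodness in Section \ref{convergence}. The final statement follows by combining Proposition \ref{backforth}, Proposition \ref{convergence-criterion} and Proposition \ref{NS-for-roots}.

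First I would reduce to a setting where all earlier lemmas apply. By Remark \ref{proper-tts-exist}, some power $\phi^{t_0}$ admits a properly expanding train track representative (in the sense of Convention \ref{before-section-6}), and the analogous statement holds for $\phi^{-1}$. Since $\Delta_{\pm}(\phi)$ is $\phi$-invariant (Corollary \ref{delta-invariance}) and coincides with $\Delta_{\pm}(\phi^{t_0})$, and since Proposition \ref{NS-for-roots} allows us to pass from a power to $\phi$ itself, it is enough to establish generalized uniform North--South dynamics for some positive power of $\phi$. Replacing $\phi$ by a sufficiently large positive power, we may therefore assume that $\phi$ and $\phi^{-1}$ both admit properly expanding train track representatives so that Proposition \ref{backforth} applies directly.

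Next I would verify the abstract criterion of Proposition \ref{convergence-criterion}. Let $X = \mathbb{P}\Curr(F_N)$, which is compact and metrizable by Proposition \ref{current-space}(1), and let $Y \subset X$ be the set of projective rational currents, which is dense by Proposition \ref{current-space}(2). Disjointness of $\Delta_+(\phi)$ and $\Delta_-(\phi)$ is part of the standing setup: a current in $\Delta_+(\phi)$ is projectively fixed and scaled by some $\lambda_e > 1$ under $\phi$ (Proposition \ref{fixed-current}), while a current in $\Delta_-(\phi) = \Delta_+(\phi^{-1})$ enjoys the analogous property under $\phi^{-1}$, so the two simplexes cannot meet. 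Given arbitrary neighborhoods $U$ of $\Delta_+(\phi)$ and $V$ of $\Delta_-(\phi)$, Proposition \ref{backforth} supplies an integer $M_0 \geq 1$ such that for every rational current $[\eta_w] \in Y$ and every $n \geq M_0$ one has $\phi^n[\eta_w] \in U$ or $\phi^{-n}[\eta_w] \in V$. This is exactly the hypothesis of Proposition \ref{convergence-criterion}, which thus yields that $\phi^2$ has generalized uniform North--South dynamics from $\Delta_-(\phi)$ to $\Delta_+(\phi)$.

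Finally I would descend via Proposition \ref{NS-for-roots}, applied first with $s=2$ to promote the result from $\phi^2$ to $\phi$ (in its current, replaced form), and then a second time to pass from the power back to the original automorphism. The backward dynamics statement (behavior of $\phi^{-n}$ on compact sets missing $\Delta_+$) comes for free by running the entire argument with $\phi$ replaced by $\phi^{-1}$, which by hypothesis also admits an absolute train track representative, so that the packages of Sections \ref{train-tracks}--\ref{convergence} apply symmetrically. The hardest step to get right is the bookkeeping of the positive powers: each of Proposition \ref{backforth}, Proposition \ref{convergence-criterion} and Proposition \ref{NS-for-roots} introduces a replacement, and one has to make sure the final North--South statement is genuinely about the original $\phi$, which is where Proposition \ref{NS-for-roots} and the $\phi$-invariance of $\Delta_{\pm}(\phi)$ are essential.
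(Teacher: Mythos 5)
Your overall architecture matches the paper's exactly: reduce to a properly expanding train track representative via Remark~\ref{proper-tts-exist}, feed Proposition~\ref{backforth} into the abstract criterion Proposition~\ref{convergence-criterion}, and descend from the power back to $\phi$ via Proposition~\ref{NS-for-roots}, with $\phi$-invariance supplied by Corollary~\ref{delta-invariance}. That part is sound.

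However, there is a genuine gap in your verification of the disjointness hypothesis of Proposition~\ref{convergence-criterion}. You assert that ``a current in $\Delta_+(\phi)$ is projectively fixed and scaled by some $\lambda_e > 1$ under $\phi$ (Proposition~\ref{fixed-current}).'' But Proposition~\ref{fixed-current} only gives $\varphi(\mu_+(e)) = \lambda_e\,\mu_+(e)$ for the \emph{vertex} currents $\mu_+(e)$, and the eigenvalues $\lambda_e$ may differ across edges. A generic convex combination $\sum a_i\,\mu_+(e_i)$ in $\Delta_+(\phi)$ is therefore \emph{not} a projective $\phi$-eigenvector: $\phi\bigl(\sum a_i\,\mu_+(e_i)\bigr) = \sum a_i \lambda_{e_i}\,\mu_+(e_i)$ is only a scalar multiple of the original when all the $\lambda_{e_i}$ with $a_i > 0$ coincide. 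So the ``analogous property'' you invoke for $\Delta_-(\phi)$ fails on the interior of the simplexes, and the simple eigenvalue-clash argument you sketch does not go through. The paper handles this in Remark~\ref{dynamics-within}: forward $\phi$-iteration of any $[\mu]$ in the interior of a face $\Delta' \subset \Delta_+(\phi)$ converges to the uniform sub-face of maximal stretch, and backward $\phi$-iteration converges to the uniform sub-face of minimal stretch; one then observes that a point in $\Delta_+(\phi)\cap\Delta_-(\phi)$ would have to converge under backward $\phi$-iteration simultaneously to an expanding $\phi$-invariant current and (since backward $\phi$-iteration is forward $\phi^{-1}$-iteration) to an expanding $\phi^{-1}$-invariant current, which is impossible. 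You would need to substitute this argument (or an equivalent one) for your incorrect claim; the rest of the proposal is fine.
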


\begin{proof} 
According to Remark \ref{proper-tts-exist} we can 
%We first 
pass to common positive powers of $\phi$ and $\phi^{-1}$ that have a properly expanding train track representatives as in Convention \ref{before-section-6}. We can then combine
Propositions 
\ref{current-space}, 
\ref{convergence-criterion}, \ref{NS-for-roots} and \ref{backforth} to obtain 
directly 
the required result. 
For the application of Proposition \ref{convergence-criterion} we need that 
$\Delta_+(\phi)$ and $\Delta_-(\phi)$
are disjoint, which is shown in 
Remark \ref{dynamics-within} below.
We also need for Proposition \ref{convergence-criterion} that both, $\Delta_+(\phi)$ and $\Delta_-(\phi)$, are $\phi$-invariant, which is shown in Corollary \ref{delta-invariance}.
\end{proof}

\begin{rem}[\bf{Dynamics within $\Delta_{+}(\phi)$}] 
\label{dynamics-within}
(1)
It is proved in Proposition \ref{edgeconvergence} that every vertex of the 
simplex 
$\Delta_+ = \Delta_+(\phi)$ is an {\em expanding $\phi$-invariant} current,
i.e. a projectivized current $[\mu]$ for which there exist $\lambda > 1$ and $t \geq 1$ such that $\phi^t(\mu) = \lambda \mu$. 

For the rest of this remark we replace $\phi$ by a suitable positive power so that every vertex current of $\Delta_+$ is projectively fixed by $\phi$. Of course, this implies that $\phi$ fixes also every face $\Delta'$ of $\Delta_+$ (but not necessarily pointwise).

\smallskip
\noindent
(2)
A \emph{uniform face} 
$\Delta'$ 
is a face of $\Delta_+$ which is spanned by 
vertices 
$[\mu^1_{+}],\ldots, [\mu^{k}_{+}]$ 
that 
all 
have the same stretch factor
$\lambda > 1$, i.e. $\varphi(\mu^j_{+})=\lambda\mu^j_{+}$ 
for all $1\le j\le k$. 

\smallskip
\noindent
(3)
Since the action of $\varphi$ on $\Curr(F_N)$ is linear, all the uniform faces 
of $\Delta_+$ 
are pointwise fixed. 
Any non-vertex  
current $[\mu]$ is always 
contained in 
the interior of 
some 
face $\Delta' \subset \Delta_{+}$. 
Then 
the sequence of 
$\varphi^{n}([\mu])$ converges towards 
a point in the uniform face 
$\Delta'_+$
of $\Delta'$ which is spanned by all vertices that have maximal stretch factor among all vertices of $\Delta'$.

\smallskip
\noindent
(4)
Similarly, under backwards iteration $n \to -\infty$
the sequence of $\varphi^{n}([\mu])$ converges towards 
a point in the uniform face $\Delta'_-$ of 
$\Delta' \subset \Delta_+$
which is spanned by all vertices that have minimal stretch factor among all vertices of $\Delta'$.

\smallskip
\noindent
(5)
As a consequence, it follows that a current $[\mu]$ can not belong to both, $\Delta_+(\phi)$ and $\Delta_-(\phi)$: By symmetry in the previous paragraph, any $[\mu] \in \Delta_-(\phi)$ converges under backwards iteration of $\phi^{-1}$ to an expanding $\phi^{-1}$-invariant current. But backwards iteration of $\phi^{-1}$ is the same as forward iteration of $\phi$, and the limit current can not be simultaneously expanding $\phi$-invariant and expanding $\phi^{-1}$-invariant.
\end{rem}

\begin{rem}
\label{BHL}
As recalled in Remark \ref{dynamics-within}, every vertex of the convex cell $\Delta_+ = \Delta_+(\phi)$ is an expanding $\phi$-invariant current. Furthermore, any point in a uniform face of $\Delta_+$ is also an expanding $\phi$-invariant current, and these are the only such 
in $\Delta_+$. It follows from Theorem \ref{mainthm} and from Remark \ref{dynamics-within} (applied to $\phi^{-1}$) that there are no other expanding $\phi$-invariant currents in $\PCurr \smallsetminus \Delta_+$.

It has been shown in 
\cite{BHL} in a slightly more general context that {\em expanding fixed currents} (i.e. currents $\mu$ with $\phi\mu = \lambda \phi$ for some $\lambda > 1$)
are in 1-1 relation with the non-negative eigenvectors with eigenvalue $> 1$ for the transition matrix $M(f)$ of any expanding train track representative $f: \Gamma \to \Gamma$ of $\phi$.
It seems likely that a similar result can be obtained by extending the methods of Section \ref{symbolic}.

\end{rem}

Moreover, pointwise dynamics for a dense subset follows from our machinery: 

\begin{thm} 
\label{thm-rational-limits}
Let $\eta_g$ be a rational current in $\Curr(F_N)$. Then, there exist  $[\mu_{\infty}]\in \Delta_{+}$ such that 
\[
\lim_{t\to\infty}\varphi^{t}[\eta_g]=[\mu_{\infty}].\]
\end{thm}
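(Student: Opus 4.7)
The plan is to compute the limit $[\mu_\infty]$ explicitly by decomposing the loop representing $[g]$ into edges and invoking the pointwise convergence of Proposition \ref{edgeconvergence} for each individual edge, combined with the generalized Perron--Frobenius analysis from \cite{LU1}.

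First I would represent the conjugacy class $[g]$ by a cyclically reduced loop $c = a_1 a_2 \cdots a_n$ in $\Gamma$ with $a_j \in E\Gamma$, so that for every reduced path $\gamma$ in $\Gamma$ one has $\langle \gamma, \varphi^t \eta_g \rangle = \langle \gamma, [f^t(c)] \rangle$ and $\|\varphi^t \eta_g\|_\Gamma = |[f^t(c)]|$. The key step is to compare $[f^t(c)]$ with the unreduced concatenation $f^t(a_1) \cdots f^t(a_n)$. Since each $a_j$ is a legal single edge, $f^t(a_j)$ is already reduced, so cancellation in passing to $[f^t(c)]$ only occurs in bounded neighborhoods of the at most $\ILT(c)$ initially illegal turns of $c$; by Lemma \ref{BCL} and the non-increase of $\ILT$ under iteration, the cumulative cancellation accumulated through $t$ successive reductions grows at most linearly in $t$, while $|[f^t(c)]|$ grows exponentially by Lemma \ref{eigenvalue+}. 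Occurrences of $\gamma$ or $\bar{\gamma}$ in the unreduced word straddling consecutive $f^t(a_j)$-blocks number at most $(|\gamma|-1)n$. Hence
\[
\frac{\langle \gamma, [f^t(c)] \rangle}{|[f^t(c)]|}
\;=\;
\frac{\sum_{j=1}^n \langle \gamma, f^t(a_j) \rangle + o(|[f^t(c)]|)}{\sum_{j=1}^n |f^t(a_j)| + o(|[f^t(c)]|)}.
\]

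Next I would invoke Proposition \ref{edgeconvergence}, which (after the powers of $\varphi$ and $f$ are fixed as in Convention \ref{before-section-6}) gives $\langle \gamma, f^t(a_j) \rangle = |f^t(a_j)|\,(\langle \gamma, \mu_+(a_j) \rangle + o(1))$, together with the generalized Perron--Frobenius machinery of \cite{LU1} applied to the substitution $\zeta_f$: this supplies a probability vector $(\beta_j)_{j=1}^n$, with $\beta_j \geq 0$, such that $|f^t(a_j)|\big/\sum_k |f^t(a_k)| \to \beta_j$ (concentrated on the indices realizing the maximum of the $\lambda_{a_k}$ from Lemma \ref{eigenvalue+}). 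Substituting these asymptotics into the displayed ratio yields
\[
\lim_{t\to\infty} \frac{\langle \gamma, [f^t(c)] \rangle}{|[f^t(c)]|} \;=\; \sum_{j=1}^n \beta_j\, \langle \gamma, \mu_+(a_j) \rangle
\]
for every reduced path $\gamma$. The right-hand side, as a function of $\gamma$, is a non-negative linear combination of Kolmogorov functions and hence itself a Kolmogorov function, so it defines a current $\mu_\infty$ lying in the convex hull of the vertices $\mu_+(a_j)$ of $\Delta_+(\varphi)$. The convergence criterion at the end of Section \ref{sec:currents} then upgrades this pointwise-in-$\gamma$ statement to $\varphi^t[\eta_g] \to [\mu_\infty]$ in $\mathbb{P}\Curr(F_N)$, with $[\mu_\infty] \in \Delta_+(\varphi)$ as required.

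The main technical obstacle is the Perron--Frobenius step, i.e., ensuring actual convergence (as opposed to mere oscillation) of the normalized edge weights $|f^t(a_j)|\big/\sum_k |f^t(a_k)|$ among edges with equal maximal growth rate; this is precisely the content of \cite{LU1} for reducible substitutions and is the reason one must pass to a sufficiently high positive power of $\zeta_f$ (equivalently, of $\varphi$) to eliminate periodicities of the transition matrix, a step already built into Convention \ref{before-section-6}. The bookkeeping of cancellation and junction error terms is routine given the exponential growth of $|[f^t(c)]|$, and the conclusion $[\mu_\infty]\in\Delta_+(\varphi)$ follows immediately from Definition \ref{inv-simpl}.
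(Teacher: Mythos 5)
Your proposal contains a genuine gap in the cancellation estimate, and as a result the computed limit can be wrong, not merely unjustified. You claim that the ``cumulative cancellation accumulated through $t$ successive reductions grows at most linearly in $t$'' and use this to assert $\sum_j |f^t(a_j)| = |[f^t(c)]| + o(|[f^t(c)]|)$. This is false as soon as $c$ contains an illegal turn: the quantity you actually need to control is the cancellation between the unreduced word $f^t(a_1)\cdots f^t(a_n)$ and its reduction $[f^t(c)]$, which is governed by the cancellation bound of $f^t$, and that bound grows geometrically, not linearly, in $t$. The concrete obstruction is an INP. If $\eta = \gamma' \circ \bar{\gamma}$ is a periodic INP subpath of $c$, then $|f^t(\gamma')| + |f^t(\gamma)|$ grows at rate $\lambda_{\eta}^{\,t}$ while the corresponding subpath of $[f^t(c)]$ contributes only the bounded length $|\eta|$. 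If $\lambda_{\eta}$ exceeds the maximal growth rate of the non-INP edges of $c$, then $\sum_j |f^t(a_j)| / |[f^t(c)]| \to \infty$ and your $o(|[f^t(c)]|)$ error terms are not small; moreover, the Perron--Frobenius weights $\beta_j$ you extract from $\zeta_f$ would then concentrate precisely on those INP edges, which contribute nothing to the actual limit, so the proposed current $\sum_j \beta_j\, \mu_+(a_j)$ would be the wrong one.

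The paper's proof avoids this by first replacing $\gamma$ with the pseudo-legal loop $[f^k(\gamma)]$ via Proposition \ref{pseudo-legal-iterate}, writing it as a legal concatenation of legal subpaths and INPs, and then running the frequency estimate of Lemma \ref{rationalconvergence} on the non-INP edges only. This quarantines all cancellation inside the INP subpaths, which are $f$-periodic and of uniformly bounded length, while the legal subpaths grow under $f^t$ with no cancellation whatsoever; the computation then reduces cleanly to Proposition \ref{edgeconvergence} applied to the non-INP edges. To repair your argument you would need to insert this pseudo-legal reduction step and restrict both the edge decomposition and the weights $\beta_j$ to edges lying outside INP subpaths.
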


\begin{proof} Let $\gamma$ be a reduced loop 
representing the conjugacy class $g$. Apply a 
sufficiently large 
power $\varphi^{k}$ to $g$, hence 
$[f^{k}(\gamma)]$, the reduced edge path 
representing $\varphi^{k}(g)$ is 
a pseudo-legal loop, see 
Proposition 
\ref{pseudo-legal-iterate}. Then the arguments in 
in the proof
of Lemma \ref{rationalconvergence} give the pointwise convergence by looking at the set of 
non-INP edges in 
$[f^{k}(\gamma)]$.
\end{proof}

\begin{rem}
\label{rational-limit-current}
From the proof of Theorem \ref{thm-rational-limits} 
and from the results of our previous paper \cite{LU1} 
one 
can also derive 
a precise description of the limit current $\mu_\infty$ for any given rational current $\eta_g$: Let $[f^{k}(\gamma)]$ be the pseudo-legal loop 
representing the conjugacy class $\varphi^{k}(g)$, and assume without loss of generality that the initial point doesn't lie on any INP-subpath. Let 
$$[f^{k}(\gamma)] = c_1 c_1 \ldots c_p \rho_1 c_{p+1} c_{p+1} \ldots \rho_2 c_{r} c_{r+1} \ldots c_{s-1} c_s$$
be a legal concatenation of edges $c_i$ and of INP's $\rho_j$. Then the limit current $\mu_\infty$ is given by
$$[\mu_\infty] = [\sum_{c_i \in E\Gamma'} \mu_+(c_i)]\, ,$$
where $\Gamma'$ is the (possibly non-connected) subgraph of $\Gamma$ consisting precisely of those strata $H_i$ of $\Gamma$ 
which satisfy one of the following two conditions, where $\succeq$ denotes the natural partial order on strata defined by the non-zero off-diagonal coefficients of the transition matrix $M(f)$ (see \cite{LU1}):
\begin{enumerate}
\item
There exists a chain $H_i = H_d \succeq H_{d-1} \succeq \ldots \succeq H_1$ of pairwise distinct strata, all with maximal Perron-Frobenius eigenvalue among the diagonal blocks of $M(f)$. Furthermore $d$ is the maximal length of any such chain.
\item
$H_i \succeq H_j$ for some $H_j$ as in (1).
\end{enumerate}
\end{rem}

The analogue of 
Theorem \ref{thm-rational-limits} 
for non-rational currents is delicate, but the authors expect that it is actually true. However, as already cautioned in the Introduction, in general the pointwise convergence 
on $\PCurr$ 
will {\em not} be uniform.

\medskip

To finish the paper, we would like to illustrate our main result by considering 
the particular case of a hyperbolic $\phi \in \Out(\FN)$ which is given as
a ``free product'' of 
fully irreducible hyperbolic  
outer automorphisms: 
Theorem \ref{mainthm}
then shows that such a free product automorphism acts on the space of projectivized geodesic currents with generalized uniform North-South dynamics, and the simplex of attraction is spanned by 
the forward limit currents
for each of the fully irreducible factors.  
More precisely: 

\begin{exmp}
\label{fee-products} 
Let $\varphi_i\in Out(F_{N_i})$ 
(with 
$N_i\ge3$) be a finite family of 
hyperbolic fully irreducible 
outer automorphisms, where 
each $\varphi_{i}:F_{N_i}\to F_{N_i}$ is represented by an 
absolute 
train-track map $f_i:\Gamma_i\to\Gamma_i$.

For simplicity we now assume (for example by passing to a positive power) that every $f_i$ has a fixed vertex $v_i$ in $\Gamma_i$, and we consider the lift $\Phi_i \in \Aut(\FN)$ represented by $f_i$ on $\pi_1(\Gamma_i, v_i)$. 
We then get a global 
absolute 
train-track map $f$ 
by wedging 
together the 
$\Gamma_i$ 
at 
the fixed points $v_i$, and $f$ represents the outer automorphism $\phi$ given by $\Phi=\Phi_{1}*\ldots*\Phi_{k} \in \Aut(\FN)$, 
for $F_N :=F_{N_1}*\ldots *F_{N_{k}}$.

Then the 
Bestvina-Feighn Combination Theorem implies that the ``free product'' $\phi$
is hyperbolic. 
We 
consider for 
any 
$i$ the embedding 
$\kappa_i: \Curr(F_{N_i})\hookrightarrow \Curr(F_N)$ as described in \cite{KL5}. 

In \cite{U2} it was proven that 
for each factor graph $\Gamma_i$ 
the current $\mu_{+}(e)$ defined in Proposition \ref{edgeconvergence} is the same for every edge $e$ of $\Gamma_i$. Hence the simplex of attraction for $\varphi$ is spanned by $k$ currents $\mu_+(e_i)$, each of which 
is given by an edge $e_i$ 
from a different $\Gamma_i$. In fact, each 
$\mu_{+}(e_i)$ 
is the 
$\kappa_i$-image of 
the well defined forward limit current
for $\varphi_i$ in $\Curr(F_N)$. Analogously, the simplex of repulsion is equal to the linear span of the images 
 in $\Curr(F_N)$ 
 of 
the backwards limit 
currents for 
all 
$\varphi$.
\end{exmp}

\bibliographystyle{alpha}
\bibliography{dynamicsoncurrents}

\end{document}